\DeclareMathOperator{\inte}{int}
\DeclareMathOperator*{\wsc}{\overset{*}{\rightarrow}}
\theoremstyle{definition}
\newtheorem{definition}{Definition}[section]
\newtheorem{assumption}[definition]{Assumption}
\theoremstyle{remark}
\newtheorem{remark}[definition]{Remark}
\theoremstyle{plain}
\newtheorem{theorem}[definition]{Theorem}
\newtheorem{lemma}[definition]{Lemma}
\newtheorem{proposition}[definition]{Proposition}
\numberwithin{equation}{section}
\title{Asymptotics of polynomials orthogonal over circular multiply connected domains}
\author[1]{James Henegan \thanks{Email: jahenegan@umc.edu}}
\author[2]{Erwin Mi\~{n}a-D\'{\i}az \thanks{Corresponding author; Email: minadiaz@olemiss.edu}}
\affil[1]{Department of Data Science\\
University of Mississippi Medical Center,
2500 North State Street,
Jackson, MS 39216}
\affil[2]{The University of Mississippi,
Department of Mathematics,
Hume Hall 305,P.~O.~Box 1848,
University, MS 38677-1848, USA.}
\date{\today}                     
\newcommand{\caliD}{\mathcal{D}}
\newcommand{\caliP}{\mathcal{P}}
\newcommand{\caliT}{\mathcal{T}}
\newcommand{\caliF}{\mathcal{F}}
\newcommand{\cj}{\overline}
\newcommand{\T}{\mathbb{T}}
\newcommand{\N}{\mathbb{N}}
\newcommand{\C}{\mathbb{C}}
\newcommand{\Z}{\mathbb{Z}}
\newcommand{\R}{\mathbb{R}}
\newcommand{\wt}{\widetilde}
\providecommand{\keywords}[1]
{
  \small	
  \textbf{\textit{Keywords---}} #1
}
\begin{document}

\maketitle

\begin{abstract}
  Let $\caliD$ be a domain obtained by removing, out of the unit disk $\{z:|z|<1\}$, finitely many mutually disjoint closed disks, and for each integer $n\geq 0$, let $P_n(z)=z^n+\cdots$ be the monic $n$th-degree polynomial satisfying the planar orthogonality condition $\int_{\caliD}P_n(z)\cj{z^m}dxdy=0$, $0\leq m<n$.  Under a certain assumption on the domain $\caliD$, we establish  asymptotic expansions and formulae that describe the behavior of $P_n(z)$ as $n\to\infty$ at every point $z$ of the complex plane. We also give an asymptotic expansion for the  squared norm  $\int_\caliD|P_n|^2dxdy$. 
 \end{abstract}
 
\keywords{Orthogonal polynomials, Bergman polynomials, asymptotic expansions, multiply connected domains}

\section{Introduction and main results}

Let $\caliD$ be a bounded domain in the complex plane $\C$, and for each integer $n\geq 0$, let $P_n(z)=z^n+\cdots$ be the unique monic polynomial of degree $n$ satisfying the orthogonality condition 
\begin{align}\label{ortho-Pn}
\int_{\caliD}P_n(z)\cj{z^m}dA(z)=0,\quad 0\leq m\leq n-1,
\end{align}
where $A$ is the two-dimensional Lebesgue measure divided by $\pi$: $dA=\pi^{-1}dxdy$.

With
\begin{align}\label{defkappa_n}
 \kappa_n:=\left(\int_\caliD|P_n|^2dA\right)^{-1/2},\quad n=0,1,\ldots,
\end{align}
the polynomials $p_n:=\kappa_n P_n$ form an orthonormal sequence:
\[
\int_\caliD p_n\cj{p_m}dA=\delta_{n,m}, \quad n, m\geq 0.
\]
To indicate their dependence on $\caliD$, we will write $P_n(z,\caliD)$, $p_n(z,\caliD)$, and $\kappa_n(\caliD)$.

 We note that the leading coefficient $\kappa_n$ carries the important extremal property  
\begin{align}\label{extremality}
 \kappa_n^{-2}=\min\int_\caliD|P|^2dA,
\end{align}
the minimum being taken over all monic polynomials $P$ of degree $n$.

In this paper, we establish formulae that describe the behavior of $P_n$ and $\kappa_n$, as $n\to\infty$, for a multiply connected domain $\caliD$ whose boundary consists of finitely many mutually disjoint circles. Such a domain is commonly referred to as a circular multiply connected domain, briefly, a CMCD.

For disks, circles, and exterior of circles, we will use the notation 
\begin{align*}
D(c,r):= {} &\{z \in \C : |z - c | <r\},\\
\T(c,r):={} & \{z \in \C : |z - c | =r\},\\
\Delta(c,r):={} & \{z \in \cj{\C} : |z - c | >r\}.
\end{align*}
Also, for every integer $s\geq 1$, we let 
\[
 \N_s:=\{1,2,\ldots,s\}.
\]

After a translation and a scaling, we can always make a CMCD have the form  
\begin{equation}\label{notation:CMCD}
\mathcal{D}= D(0,1) \setminus  \bigcup_{j=1} ^s \cj{D(c_j, r_j)}\ ,
\end{equation}
where $s \geq 1$ and 
\[
\cj{D(c_j,r_j)}\subset D(0,1),\quad \cj{D(c_j,r_j)}\cap\cj{D(c_k,r_k)}=\emptyset, \quad  1\leq j\not=k\leq s. 
\]

It is known that if $\caliD$ is any subdomain (or more generally, a measurable subset) of  $D(0,1)$  containing an annulus of the form $A_r=\{z:r<|z|<1\}$ for some $r\in[0,1)$, then 
\begin{align}\label{leading-asympt}
\lim_{n\to\infty}\frac{\kappa_n(\caliD)}{\sqrt{n+1}}=1
\end{align}
and
\begin{align}\label{exterior-asympt}
\quad \lim_{n\to\infty}\frac{P_n(z,\caliD)}{z^n}=1, \quad z\in \Delta(0,r).
\end{align}

This result is an instance of a more general one by Korovkin \cite[formulae (10), (14), (15)]{korov} for polynomials orthogonal with weights over domains bounded by analytic Jordan curves. The limit in  \eqref{exterior-asympt} takes place uniformly on closed subsets of $\Delta(0,r)$, and geometric estimates for the speed of convergence in \eqref{leading-asympt} and \eqref{exterior-asympt}  that depend on the number $r$ are also given in \cite{korov}. Thus, for a  $\caliD$ as in \eqref{notation:CMCD}, we know that \eqref{exterior-asympt} holds true with 
\begin{align}\label{rhox}
r=1/\rho_x:=\max_{j\in \N_s}(|c_j|+r_j).
\end{align}

With the exception of \cite[Proposition 3.2]{SS} (see the discussion toward the end of Subsection \ref{asymptoticresults}), no further exploration on the asymptotic properties of polynomials orthogonal over a multiply connected domain seems to have been pursued in the existing literature. We will be able to expand on \eqref{exterior-asympt} by giving a series representation for $P_n$ (Theorem \ref{thm-poly}) that yields, after further analysis, the asymptotic behavior of $P_n(z)$ at every point  $z$ of the complex plane. In particular, we prove that the limit in \eqref{exterior-asympt} extends to a maximal domain of the form $\Delta(0,\rho_a)$, where $\rho_a$ is a number that is determined by the inner circles bounding $\caliD$ and is such that $\rho_a<\rho_x^{-1}$. We  give the exact rate of convergence in \eqref{exterior-asympt}, which differs according to whether $|z|>\rho_x$ or $\rho_a< |z|\leq \rho_x$, with $\rho_x$ as in \eqref{rhox}.  Indeed, from the series representation we can get a full asymptotic description of the error term, which particularly 
for $|z|>\rho_x$, turns into a nice asymptotic expansion for $P_n$.  We will also be able to refine \eqref{leading-asympt} by giving a full asymptotic expansion for $\kappa_n^{-2}$.

Our method of proof requires an assumption on $\caliD$ that we have proven to hold true in many cases. We expect this assumption to be, indeed, satisfied by every CMCD.

It is possible to extend some of the results of this paper to domains of the form $\varphi(\caliD)$, where $\caliD$ is as in \eqref{notation:CMCD} and  $\varphi$ is a conformal map of $D(0,1)$ onto the interior of an analytic Jordan curve, see \cite{Henegan} for details.

The asymptotic properties of orthogonal polynomials over planar regions have been the focus of attention of past and many recent works. When the domain of orthogonality is bounded by a Jordan curve  with some degree of smoothness (analytic, piecewise analytic, H\"{o}lder continuous, quasiconformal), strong asymptotics and/or zero distribution results have been derived in \cite{BS,Carl,DM1,DM2,DMN,LSS,MS,M1,SS2,S,Suetin}, and for orthogonality with weights, in \cite{korov,M2,MSS}. Logarithmic/zero asymptotics with applications to shape reconstruction have been given in \cite{GPSS,SSSV} for polynomials orthogonal over an archipelago (a finite union of Jordan domains). The papers \cite{MinaSimanek,SS,Simanek1,Simanek2}, although more general in scope, also carry important implications for planar orthogonality.

\subsection{Preliminaries}

Let $\caliD$ be given by \eqref{notation:CMCD}. For each $j \in \N_s$, there exists a unique pair of numbers $a_j \in D(0,1)$ and $\sigma_j \in (0,1)$ such that the M\"{o}bius transformation 
 \[
\Phi_j(z) := \frac{\cj{a}_j}{|a_j|}\frac{z-a_j}{1-\overline{a}_jz}
\]
maps $D(c_j,r_j)$ onto  $D(0,\sigma_j)$:
\[
\Phi_j(D(c_j,r_j))=D(0,\sigma_j).
\]  

It is easy to verify the relations 
\begin{align}\label{formulas-for-c-and-r}
 c_j=\frac{a_j(1-\sigma_j^2)}{1-|a_j|^2\sigma_j^2},\quad r_j=\frac{\sigma_j(1-|a_j|^2)}{1-|a_j|^2\sigma_j^2}, \quad j\in \N_s,
\end{align}
whence we get that for all $j\in \N_s$,
\begin{align}\label{formulas-for-c-and-r-1}
 |c_j|\leq |a_j|\quad \text{and}\quad  r_j\leq \sigma_j, 
\end{align}
equality holding in each case  if and only if $c_j=0$.

The function $\Phi_j$ is an automorphism of the unit disk whose inverse $\Phi_j^{-1}$ is given by  
\[
\Phi_j^{-1}(t) =\frac{a_j}{|a_j|} \frac{t+|a_j|}{1+|a_j|t}.
\]

For each $j \in \N_s$, we  define 
    \begin{align}\label{def-Tj}
    T_j (z) := \Phi_j^{-1} (\sigma_j ^2 \Phi_j  (z)), \quad z \in \cj{\C},
    \end{align}
and associate to $\mathcal{D}$ the family $ \caliT^*$ of all finite compositions of transformations $T_j$:
         \begin{align}\label{def-T*-family}
   \caliT^*:= \{T_{j_n} T_{j_{n-1}} \cdots T_{j_2} T_{j_1}: n \in \N, \ j_k \in \N_s, \ k\in  \N_n\}.
    \end{align} 
We adjoin the identity map $T_0(z) \equiv z$
to $ \caliT^*$ to form 
\begin{align}\label{def-T-family}     
\mathcal{T}:= \caliT^*\cup \{T_0 \}.
\end{align}
    
As illustrated in Figure \ref{t-sub-alpha}, let 
\[
\rho_a:=\max_{j\in \N_s}|a_j|.
\]

The disk $D(0,\rho_a^{-1})$ is the largest disk about the origin in which every $\tau \in \mathcal{T}$ is analytic. The validity of our asymptotic results rests upon the following assumption.  
    
\begin{assumption}\label{assumption}
   The series $\sum_{\tau \in \mathcal{T}} |\tau'(z)|$ converges locally uniformly on \\$D(0,\rho_a^{-1})$.
\end{assumption}
 Assumption \ref{assumption} is equivalent to the convergence of $\sum_{\tau \in \mathcal{T}} |\tau'(z)|$ for some  $z\in D(0,\rho_a^{-1})$ (see Section \ref{section3.1}, in particular, the inequalities \eqref{series-sandwich}). We will  establish its validity in a number of cases that we 
 summarize in the following proposition.

\begin{figure}[t]

\begin{center}
\begin{tikzpicture}[scale=2]


\draw[densely dashed] (0,0) circle (1cm);

\filldraw
(0,0) circle (.75pt) node[align=center,   below] {$  0$};

\filldraw
(1,0) circle (.75pt) node[align=center,   below] {$  1$};

\filldraw
(0.649519,.375) circle (.75pt) node[align=center, left] {$a_1$};

\filldraw
(-0.467617,0.586374) circle (.75pt) node[align=center, left] {$a_2$};

\filldraw
(-0.367617,-0.286374) circle (.75pt) node[align=center, left] {$a_3$};

\draw[densely dashed] (0,0) circle (0.75cm);

\fill[gray!50!white] (-3+0,0) circle (1cm);

\draw[densely dashed] (-3+0,0) circle (1cm);

\fill[white!20!white] (-3+0.521244,0.30094) circle (0.329154cm);

\draw[densely dashed] (-3+0.521244,0.30094) circle (0.329154cm);
\filldraw
(-3+0.649519,.375) circle (.75pt) node[align=center, left] {$a_1$};

\fill[white!20!white] (-3-0.431646,0.541268 ) circle (0.192308cm);
\draw[densely dashed] (-3-0.431646,0.541268 ) circle (0.192308cm);
\filldraw
(-3-0.467617,0.586374) circle (.75pt) node[align=center, below] {   $a_2$};

\fill[white!20!white] (-3-0.255227,-0.198822) circle (0.509542);
\draw[densely dashed] (-3+-0.255227,-0.198822) circle (0.509542);

\filldraw
(-3-0.367617,-0.286374) circle (.75pt) node[align=center, left] {$a_3$};

\filldraw
(-3+0,0) circle (.75pt) node[align=center,   below] {$  0$};

\filldraw
(-3+1,0) circle (.75pt) node[align=center,   below] {$  1$};

\end{tikzpicture}

\caption{A CMCD with $\rho_a = |a_1|= |a_2| > |a_3|$. }

\label{t-sub-alpha}

\end{center}

\end{figure}
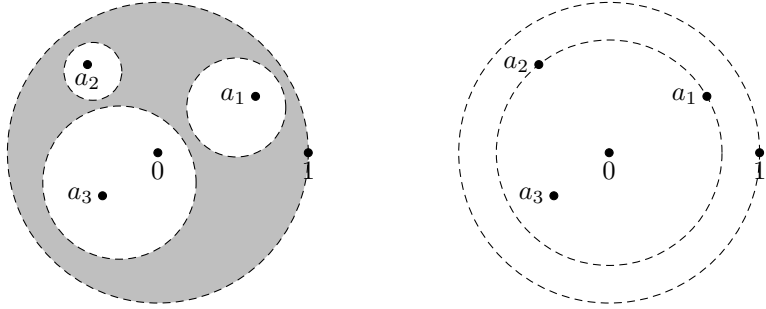

\begin{proposition}\label{thm-converge-cases} Let $\mathcal{D}$ be a CMCD as described by \eqref{notation:CMCD}. Assumption \ref{assumption} holds true whenever $\caliD$  satisfies any one of the following conditions:
\begin{itemize}
\item[i)]  $c_j \in (-1,1)$ for each $j \in \N_s$.
\item[ii)]
\begin{align}\label{ineq-radii-condition}
\sum_{j=1}^s\frac{r_{j}^2}{(1-|c_{j}|\rho_a)^{2}}<1.
\end{align}
\item[iii)]  $\mathcal{D} = \Psi (\tilde{\mathcal{D}})$, where $\tilde{\mathcal{D}}$ is a CMCD  that satisfies Assumption \ref{assumption} and $\Psi$ is an automorphism of the unit disk. In particular, 
this is the case for any $\mathcal{D}$ with one or two removed disks ($s=1,2$).
\end{itemize}
\end{proposition}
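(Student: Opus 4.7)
The plan is to verify Assumption \ref{assumption} under each of the three hypotheses separately, treating (iii) first because it reduces new cases to the other two.

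\textbf{For (iii).} The key identity is $T_j = \Psi\circ\tilde T_j\circ\Psi^{-1}$, which follows from the fact that $\Phi_j$ is determined by $(a_j,\sigma_j)$ up to a unimodular rotation, and this rotation is absorbed in the definition $T_j = \Phi_j^{-1}\circ(\sigma_j^2\,\cdot)\circ\Phi_j$ since $\sigma_j^2$ is real. By induction, $\tau = \Psi\circ\tilde\tau\circ\Psi^{-1}$ for every $\tau\in\caliT$, with a matching $\tilde\tau\in\tilde{\caliT}$, and the chain rule gives
\[
|\tau'(\Psi(w))| = \frac{|\Psi'(\tilde\tau(w))|}{|\Psi'(w)|}\,|\tilde\tau'(w)|.
\]
By the sandwich inequalities mentioned right after Assumption \ref{assumption}, verifying Assumption \ref{assumption} for $\caliD$ reduces to pointwise convergence of the series at a single point of $D(0,\rho_a^{-1})$; I take $z_0 = \Psi(0)$, so $w=0\in D(0,\tilde\rho_a^{-1})$. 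Every nontrivial $\tilde\tau$ sends $0$ into the compact set $\bigcup_j\cj{\tilde D(\tilde c_j,\tilde r_j)}\subset D(0,1)$, on which $|\Psi'|$ is uniformly bounded, so the finite sum for $\tilde\caliD$ transfers to a finite sum for $\caliD$. The specializations $s=1,2$ follow by reducing to case (i): for $s=1$, pick $\Psi$ with $\Psi(0)=a_1$, making $\tilde a_1 = 0$ and $\tilde c_1 = 0\in(-1,1)$; for $s=2$, pick $\Psi$ sending both $\tilde a_j$ onto real points, which is always possible because any two distinct points of $D(0,1)$ lie on a common hyperbolic geodesic.

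\textbf{For (i).} With every $c_j$ (and hence every $a_j$) real, each $T_j$ has real coefficients and $\caliT$ preserves $\R$. Set $I := \caliD\cap\R = (-1,1)\setminus\bigcup_j[c_j-r_j,c_j+r_j]$. The Schottky-type nesting — each $T_j$ sends $\cj{D(0,1)}\setminus\cj{D(c_j,r_j)}$ into $\cj{D(c_j,r_j)}$ — implies that the intervals $\{\tau(I)\}_{\tau\in\caliT}$ are pairwise disjoint subsets of $(-1,1)$. Therefore
\[
\sum_{\tau\in\caliT}\int_I|\tau'(x)|\,dx = \sum_\tau\operatorname{length}(\tau(I)) \leq 2,
\]
and Fubini forces $\sum_\tau|\tau'(x)|<\infty$ for almost every $x\in I\subset D(0,\rho_a^{-1})$, which suffices.

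\textbf{For (ii).} Direct simplification of $T_j'(z) = \sigma_j^2\,\Phi_j'(z)(\Phi_j^{-1})'(\sigma_j^2\Phi_j(z))$ via the relations \eqref{formulas-for-c-and-r} yields the clean identity
\[
|T_j'(z)| = \frac{r_j^2}{|1-\cj c_j z|^2},
\]
whose only singularity at $1/\cj c_j$ lies outside $\cj{D(0,\rho_a^{-1})}$ since $|c_j|\leq\rho_a$. For $z\in\cj{D(0,\rho_a)}$ this yields $|T_j'(z)|\leq r_j^2/(1-|c_j|\rho_a)^2$, so \eqref{ineq-radii-condition} forces $\sum_j|T_j'(z)|\leq q<1$ uniformly on $\cj{D(0,\rho_a)}$. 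A chain-rule iteration starting at $z=0$, where $z_1=0$ and $z_2=T_{j_1}(0)=c_{j_1}\in\cj{D(0,\rho_a)}$, then produces a geometric-style bound. The delicate step — and the main obstacle — is closing the iteration past the second step: $T_j(\cj{D(0,\rho_a)})$ is generally not contained in $\cj{D(0,\rho_a)}$, so the natural region on which $\sum_j|T_j'|<1$ is not forward-invariant under the semigroup, and the usual Banach-contraction bookkeeping fails. Closing the geometric iteration therefore requires either a finer invariant region tailored to \eqref{ineq-radii-condition}, or a packing/area argument exploiting tile disjointness $\sum_\tau A(\tau(\caliD))\leq 1$ to absorb the small overshoot of the iterates beyond $\cj{D(0,\rho_a)}$.
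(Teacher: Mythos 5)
Your parts (i) and (iii) are essentially the paper's own arguments (the paper uses the mean value theorem where you use Tonelli, and it likewise reduces (iii) to conjugation $\wt T_j=\Psi^{-1}\circ T_j\circ\Psi$ plus boundedness of $\Psi'$ and $(\Psi^{-1})'$), and both are correct. The problem is part (ii), which you leave unfinished on the grounds of an obstacle that does not exist.

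You claim that ``$T_j(\cj{D(0,\rho_a)})$ is generally not contained in $\cj{D(0,\rho_a)}$,'' so that the region on which $\sum_j|T_j'|<1$ is not forward-invariant. In fact $T_j(\cj{D(0,\rho_a)})\subset\cj{D(0,\rho_a)}$ holds for \emph{every} $j\in\N_s$, and this is exactly the content of Lemma \ref{inclusions}: if $|a_j|<\rho_a$, then $\rho_a\in(|a_j|,|a_j|^{-1})$ and \eqref{between} gives $T_j(\cj{D(0,\rho_a)})\subset D(0,\rho_a)$; if $|a_j|=\rho_a$, then \eqref{compact1} together with the fixed point $T_j(a_j)=a_j$ gives $T_j(\cj{D(0,\rho_a)})\subset\cj{D(0,\rho_a)}$ (the image disk is internally tangent to $\T(0,\rho_a)$ at $a_j$; in the notation of Lemma \ref{lemma1}, $m(\rho_a)=\rho_a$). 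Consequently every $\tau\in\caliT$ maps $\cj{D(0,\rho_a)}$ into itself, all intermediate points $T_{j_k}\cdots T_{j_1}(z)$ of the chain-rule product stay in $\cj{D(0,\rho_a)}$ when $|z|\leq\rho_a$, and your bound $|T_j'(z)|\leq r_j^2(1-|c_j|\rho_a)^{-2}$ applies to each factor. Summing over the $s^n$ strings of length $n$ gives
\[
\sum_{\tau:\,\ell(\tau)=n}|\tau'(z)|\leq\Bigl(\sum_{j=1}^s\frac{r_j^2}{(1-|c_j|\rho_a)^2}\Bigr)^{n},\qquad |z|\leq\rho_a,
\]
and \eqref{ineq-radii-condition} makes this a convergent geometric series; convergence at the single point $z=0$ then suffices by \eqref{series-sandwich}. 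No finer invariant region or packing argument is needed — you should simply verify the invariance of $\cj{D(0,\rho_a)}$ and close the iteration as above.
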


Note that since $\rho_a<1$, condition \eqref{ineq-radii-condition} is satisfied if 
\[
\max_{j\in\N_s}|r_j|^2<\frac{1}{\sum_{j=1}^s(1-|c_j|)^{-2}}.
\]
Hence, for any configuration of centers $c_j$, there is $r>0$ such that Assumption \ref{assumption} is verified by every $\caliD$ with $\max_{j\in\N_s}r_j<r$. 

Note also that since $\pi\sum_{j=1}^s r_j^2<1$, 
\eqref{ineq-radii-condition} is satisfied by every CMCD with $\max_{j\in\N_s}|c_j|<1-\pi^{-1/2}$.

Based on the variety of cases covered by Proposition \ref{thm-converge-cases},  we  believe  that Assumption \ref{assumption} is indeed satisfied by every CMCD. 
 
\subsection{Asymptotics for the orthogonal polynomials}  \label{asymptoticresults}
 
Let $\mathcal{D}$ be as described by \eqref{notation:CMCD}, and for each integer $n\geq 0$, let $P_n$ and $\kappa_n$ be defined by \eqref{ortho-Pn} and \eqref{defkappa_n}. The simplest situation is when $\caliD$ is bounded by two concentric circles, that is, when
\begin{equation}\label{annulus}
\mathcal{D} = D(0,1) \setminus \cj{D(0, r_1)}, \quad 0<r_1<1.
\end{equation}
This case is actually trivial, since for such a $\caliD$ we have
\begin{align}\label{PnSingleDisk}
P_n(z)=z^n, \quad \kappa_n=\sqrt{\frac{n+1}{1-r_1^{2n+2}}},\quad n\geq 0.
\end{align}

Thus, from now on  we will always assume  $\caliD$ is  such that for some $j\in \N_s$, $c_j\not=0$, which, by \eqref{formulas-for-c-and-r-1}, is equivalent to assume  that 
$\caliD$ is such that 
\[
\rho_a>0.
\]

For every $j\in \N_s$, let us define 
\begin{align}\label{def-of-xj-yj}
x_j:=\frac{c_j/|c_j|}{|c_j|+r_j},\quad y_j:=\frac{c_j/|c_j|}{|c_j|-r_j},
\end{align}
with the understanding that $y_j=\infty$ when $|c_j|=r_j$.  We can also express these quantities in terms of $a_j$ and $\sigma_j$ by the formulae
\begin{align*}
x_j=\frac{a_j}{|a_j|}\frac{1+|a_j|\sigma_j}{\sigma_j+|a_j|},\quad y_j=-\frac{a_j}{|a_j|}\frac{1-|a_j|\sigma_j}{\sigma_j-|a_j|}.
\end{align*}

These numbers obey the inequalities
\[
1<|x_j|<|a_j|^{-1},\qquad |x_j|<|y_j|,
\]
so that  
\begin{align*}
1<\rho_x:=\min_{j\in \N_s}|x_j|<\rho_a^{-1}.
\end{align*}

Let 
\begin{align}\label{e_j-def}
\epsilon_j:=\begin{cases}
1, & r_j\geq |c_j|,\\
-1, & r_j< |c_j|.
\end{cases}
\end{align}
Geometrically, $\epsilon_j$ distinguishes whether $0\in \cj{D(c_j,r_j)}$ (case $r_j\geq |c_j|$) or not. 

To every $j\in \N_s$, we associate a function $R_j(w,z)$ as follows. If $r_j\not=|c_j|$ (geometrically, if $0\not\in \T(c_j,r_j)$), we set
\begin{align}\label{def-Rjk}
\begin{split}
R_j(w,z):={} & \frac{\epsilon_j\left(y_j/x_j-1\right)}{\sqrt{(y_j/x_j)^2-1-w}}\times\frac{1-\frac{z}{x_j}+w\left(\frac{y_j-2x_j}{y_j-x_j}+\frac{y_j^2+x_j^2}{y_j^2-x_j^2}\frac{z}{x_j}\right)-\frac{x_jw^2}{y_j-x_j}}{(1-z/x_j)^2+w\left(1-\frac{2z}{y_j+x_j}\right) },
\end{split}
\end{align}
whereas if  $r_j=|c_j|$, we let
\begin{align}\label{def-Rjk-2}
\begin{split}
R_j(w,z):=-\frac{1-z/x_j+w\left(1+z/x_j\right)}{(1-z/x_j)^2+w}.
\end{split}
\end{align}

For each $z\not=x_j$, the function $R_j(w,z)$  is analytic (in the variable $w$) in a neighborhood of the origin, and its Maclaurin series
\begin{align}\label{coefficients-Rjk}
R_j(w,z)=\sum_{k=0}^\infty R_{j,k}(z)w^k
\end{align}
is easy to compute, albeit when $r_j\not=|c_j|$, the expressions that explicitly represent  the coefficients $R_{j,k}(z)$ quickly become cumbersome as $k$ grows. Still, one can easily see that  each $R_{j,k}$ is a rational function whose only pole is $x_j$, and 
\begin{align*}
 R_{j,k}(0)=\begin{cases}\frac{\epsilon_j(-1)^{k}\binom{1/2}{k}\left(2k\frac{y_j}{x_j}+1\right)\left(\frac{y_j}{x_j}-1\right)}{\left((y_j/x_j)^2-1\right)^{k+1/2}},& \ r_j\not=|c_j|,\\
 -1,&\ r_j=|c_j|,\ k=0,\\
 0, &\ r_j=|c_j|,\ k>0.
\end{cases}
 \end{align*}

Let 
\[
C_k(z):=\sum_{j:|x_j|=\rho_x}R_{j,k}(z),\qquad k\geq 0.
\] 

\begin{theorem}\label{thm-asymp-leading}For the leading coefficients $\kappa_n$, we have the asymptotic expansion (as $n\to\infty$)
\begin{align} \label{leading-coeff-asympt}
(n+1) \kappa_n ^{-2}\sim 1+\frac{\rho_x^{-2n-2}}{2\pi}\sum_{k=0}^\infty C_{k}(0)\frac{\Gamma(k+1/2)\Gamma(n-k+3/2)}{\Gamma(n+2)}.
\end{align}
\end{theorem}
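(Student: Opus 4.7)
The plan is to convert the squared norm $\kappa_n^{-2}$ into a boundary integral on $\partial\caliD$ and then insert the series representation of $P_n$ supplied by Theorem \ref{thm-poly}. First I would use the orthogonality of $P_n$ against the lower-degree polynomial $P_n-z^n$ to rewrite $\kappa_n^{-2}=\int_{\caliD}P_n(z)\overline{z^n}\,dA(z)$. Since $\overline{z^n}=\partial_{\bar z}(\overline{z^{n+1}}/(n+1))$, the complex Green formula yields
$$(n+1)\kappa_n^{-2}=\frac{1}{2\pi i}\oint_{\partial\caliD}P_n(z)\,\overline{z^{n+1}}\,dz.$$
On the outer circle $\T(0,1)$, where $\bar z=1/z$, the residue of $P_n(z)z^{-n-1}$ at $0$ equals the coefficient of $z^n$ in $P_n$, producing the constant $1$. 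On each inner circle $\T(c_j,r_j)$, which bounds a hole of $\caliD$ and is traversed clockwise, $\bar z=\bar c_j+r_j^2/(z-c_j)$; expanding by the binomial theorem and collecting residues at $z=c_j$ gives the exact identity
$$(n+1)\kappa_n^{-2}=1-\sum_{j=1}^{s}\sum_{k=1}^{n+1}\binom{n+1}{k}\bar c_j^{\,n+1-k}r_j^{2k}\,\frac{P_n^{(k-1)}(c_j)}{(k-1)!}.$$

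Next I would insert the series representation of $P_n$ from Theorem \ref{thm-poly}, which by Assumption \ref{assumption} converges absolutely and uniformly on compact subsets of $D(0,\rho_a^{-1})$, a neighborhood of every center $c_j$. Termwise differentiation therefore yields $P_n^{(k-1)}(c_j)$ as a convergent sum indexed by $\caliT$, and reinserting into the boundary identity decomposes each inner contribution by the underlying group element. Contributions from $\tau\in\caliT^*$ of word length $\geq 2$ are controlled geometrically via Assumption \ref{assumption} and decay strictly faster than $\rho_x^{-2n-2}$; among the length-one terms, only the disks with $|x_j|=\rho_x$, i.e.\ $|c_j|+r_j=\rho_x^{-1}$, live on the largest exponential scale, the other $j$'s contributing $(|c_j|+r_j)^{2n+2}<\rho_x^{-2n-2}$. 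Both secondary contributions can be absorbed into the $o$-term of any finite truncation of the asymptotic series.

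The core algebraic step is then to recognize that, for each extremal $j$, the combination of binomial coefficients and derivatives of $P_n$ at $c_j$ assembles itself into the Maclaurin expansion in $w$ of the rational function $R_j(w,0)$ specified by \eqref{def-Rjk}--\eqref{def-Rjk-2}. Here I would exploit the Beta integral identity
$$\frac{\Gamma(k+1/2)\Gamma(n-k+3/2)}{\Gamma(n+2)}=\int_0^1 w^{k-1/2}(1-w)^{n-k+1/2}\,dw,$$
so that the desired series is precisely $\int_0^1 w^{-1/2}(1-w)^{n+1/2}R_j(w/(1-w),0)\,dw$ expanded termwise by $R_j(w,0)=\sum_k R_{j,k}(0)w^k$. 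Showing that the boundary residue sum reproduces this integral (up to the negligible tails already identified) yields the coefficients $R_{j,k}(0)$, and summing over $j$ with $|x_j|=\rho_x$ gives the $C_k(0)$ of the theorem.

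The main obstacle I anticipate lies precisely in this final algebraic identification: matching the explicit combinatorial output of the residue calculus, after substituting the series for $P_n$, with the Taylor coefficients of $R_j(w,0)$ as written in \eqref{def-Rjk}--\eqref{def-Rjk-2}. The cases $r_j\ne|c_j|$ (two distinct poles $x_j, y_j$ for the denominator) and $r_j=|c_j|$ (a confluence forcing the use of \eqref{def-Rjk-2}) will have to be treated separately, and the somewhat intricate form of $R_j$ will demand careful bookkeeping. A secondary technical point is to prove quantitatively that the tails contributed by longer words in $\caliT^*$ and by non-extremal disks decay strictly faster than $n^{-K-1/2}\rho_x^{-2n-2}$ for every fixed $K$, so that the expansion has the genuine asymptotic character indicated by $\sim$.
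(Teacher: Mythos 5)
Your opening reduction is correct and is genuinely different from the paper's: the paper never passes through Green's formula, but instead writes $(n+1)\kappa_n^{-2}=\sum_{k\ge 0}f_{n,2k}(0)$ directly from Theorem \ref{thm-poly}, shows the tail $\sum_{k\geq 2}f_{n,2k}(0)$ is $O((m(\rho_x)/\rho_x)^{2n})$ via Lemma \ref{lemma-estimates}, and isolates $f_{n,2}(0)=-\sum_j\frac{1}{2\pi i}\oint_{\T(0,\rho_x)}T_j(\zeta)^{n+1}\zeta^{-n-2}d\zeta$ up to exponentially smaller errors. Your exact residue identity is also correct, and your Beta-integral guess is on target: substituting $x=1/(1-w)$ in your integral $\int_0^1 w^{-1/2}(1-w)^{n+1/2}R_j(w/(1-w),0)\,dw$ reproduces exactly the integral $\int_1^{\cdot}R_j(x-1,0)x^{-n-2}(x-1)^{-1/2}dx$ that the paper arrives at. It is also worth noting that your binomial sum re-packages, via $\bar c_j+r_j^2/(z-c_j)=1/T_j^{-1}(z)$ and the substitution $z=T_j(\zeta)$, into $\frac{1}{2\pi i}\oint P_n(T_j(\zeta))\,T_j'(\zeta)\,\zeta^{-n-1}d\zeta$; replacing $P_n$ by its leading behavior lands precisely on the paper's integral $I_n(0)$ of \eqref{def-In}. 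So both routes converge to the same key object.

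The genuine gap is that the step you label ``the core algebraic step'' --- recognizing that the residue data assembles into the Maclaurin coefficients $R_{j,k}(0)$ weighted by $\Gamma(k+1/2)\Gamma(n-k+3/2)/\Gamma(n+2)$ --- is the entire analytic content of the theorem, and you only assert it. In the paper this is Proposition \ref{prop-integral-error-1}: a steepest-descent analysis in which the contour $\T(0,|x_j|)$ is deformed onto the curve $C_j$ through the critical point $x_j$ of $T_j(\zeta)/\zeta$, the variable is changed via the inverse of $\zeta/T_j(\zeta)$ (a shifted Zhukovsky map), and the boundary values $H_+ + H_-$ are computed explicitly; the function $R_j(w,z)$ in \eqref{def-Rjk}--\eqref{def-Rjk-2} is \emph{defined} to be the output of that computation, so there is nothing to ``recognize'' without doing it. A second, smaller gap: your plan to control the inserted series termwise is delicate because the centers $c_j$ lie in $\cj{D(0,\rho_a)}$, where $P_n(z)\not\sim z^n$, and the binomial coefficients $\binom{n+1}{k}$ are large; naive Cauchy estimates on $P_n^{(k-1)}(c_j)$ do not obviously beat $\rho_x^{-2n}$ uniformly in $k\le n+1$. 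The workable mechanism is to re-sum the $k$-sum back into a contour integral and invoke the quantitative bounds of Lemma \ref{lemma-estimates} on the length-$\ge 2$ part of the series --- at which point you have essentially rebuilt the paper's argument. As it stands, the proposal is a correct strategic outline with the decisive estimate missing.
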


\begin{remark} The expansion \eqref{leading-coeff-asympt} ``degenerates"  (i.e., $C_{k}(0)=0$ for all $k\geq 1$) if $r_j=|c_j|$ whenever $|x_j|=\rho_x$. However, there can only be one $j\in\N_s$ obeying $r_j=|c_j|$, since this condition is equivalent to $0\in \T(c_j,r_j)$, so that the degeneration happens exactly when one of the circles, say $\T(c_1,r_1)$, passes through the origin, and 
\[
\T(c_j,r_j)\subset D(0,|c_1|+r_1),\quad j=2,\ldots,s.
\]
When this is the case,  there exists $0<\beta<1$ such that 
\begin{align} \label{degenerate-case}
(n+1) \kappa_n ^{-2}= 1-\frac{\rho_x^{-2n-2}}{2\sqrt{\pi}} \frac{\Gamma(n+3/2)}{\Gamma(n+2)} +O(\beta^n)
\end{align} 
as $n\to\infty$.
\end{remark}

The quantity
\begin{equation}\label{defmandMr}
m(r):=\max\{|T_j(z)|:|z|=r,\, j\in \N_s\},\quad r\in[0,\rho_a^{-1}],
\end{equation} 
is used in our next theorem to provide the rate of decay of the error term. To better understand the estimate, we mention that  (see Lemma \ref{lemma1} below)
\[
 m(r)< r,\qquad r\in(\rho_a,\rho_a^{-1}),
\]
and that
\[
m(\rho_x)/\rho_x=\rho_x^{-2}=\min_{r\in(\rho_a,\rho_a^{-1})}m(r)/r.
\]

\begin{theorem}\label{thm-cmcds} (a) For every $r>\rho_a$, we have 
\begin{align}\label{exterior-asymptotics}
\frac{P_n(z)}{z^n}=1 + \begin{cases}O((m(r)/r)^n),& \rho_a<r\leq\rho_x,\\
O(n^{-1/2}(m(\rho_x)/\rho_x)^n), & r>\rho_x,
                        \end{cases}
\end{align}
uniformly in $z\in \T(0,r)$ as $n\to\infty$. Moreover,
the asymptotic expansion 
\begin{align}\label{expansion-exterior-asymptotics}
\frac{P_n(z)}{z^n} \sim {}  1+
                        \frac{\rho_x^{-2n-2}}{2\pi}\sum_{k=0}^\infty C_{k}(z)\frac{\Gamma(k+1/2)\Gamma(n-k+3/2)}{\Gamma(n+2)}
\end{align} 
holds true uniformly on closed subsets of $\Delta(0,\rho_x)$ as $n\to\infty$.

(b) For all $n$ sufficiently large, 
\begin{equation}\label{cor-eqn-one}
 P_n(z) =  \sum_{\tau \in \mathcal{T}} \tau(z)^n \tau'(z)(1 + (K_n  \circ \tau) (z)), \quad  z\in D(0,\rho_x),
\end{equation}
where $K_n(z)$ is an analytic function in $D(0,\rho_x)$ such that
\begin{align}\label{estimate-K_n}
K_n(z) = O\left(n^{-1/2}\rho_x^{-2n}\right) 
\end{align}
locally uniformly on $D(0,\rho_x)$ as $n \rightarrow \infty$.  

Equation \eqref{cor-eqn-one} is equivalent to
\begin{equation}\label{cor-eqn-two}
P_n(z) = z^n(1+ K_n(z)) + 
 \sum_{j=1}^s P_n(T_j(z))  T_j '(z), \quad z \in D(0,\rho_x).
 \end{equation}
\end{theorem}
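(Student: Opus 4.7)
The plan is to deduce Theorem \ref{thm-cmcds} from the series representation for $P_n$ furnished by Theorem \ref{thm-poly}, which expresses $P_n(z)$ on $D(0,\rho_x)$ as a sum over $\tau \in \mathcal{T}$ of terms of the form $\tau(z)^n \tau'(z)$ modified by a correction that decays as $n \to \infty$; convergence of the $\mathcal{T}$-series is ensured by Assumption \ref{assumption}. I define the analytic function $K_n(z)$ on $D(0,\rho_x)$ to be precisely the multiplicative correction attached to the identity element $T_0$, so that the contribution of $\tau=T_0$ equals $z^n(1+K_n(z))$; the decay rate $K_n(z)=O(n^{-1/2}\rho_x^{-2n})$ will be inherited from the error term in Theorem \ref{thm-poly}.

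For part (b), I first verify that this same $K_n$ controls the correction at every $\tau \in \mathcal{T}^*$, which rewrites the representation of $P_n$ as \eqref{cor-eqn-one}. The equivalence with \eqref{cor-eqn-two} is a purely combinatorial consequence of the disjoint decomposition
\[
\mathcal{T} = \{T_0\} \sqcup \bigsqcup_{j=1}^{s} (T_j \circ \mathcal{T})
\]
together with the chain rule $(\tilde\tau \circ T_j)'(z) = \tilde\tau'(T_j(z))\, T_j'(z)$: substituting $\tau=\tilde\tau\circ T_j$ in the sum over $\mathcal{T}^*$ factors out $T_j'(z)$ and collapses the inner $\tilde\tau$-sum back into $P_n(T_j(z))$. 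The bound \eqref{estimate-K_n} is read off from Theorem \ref{thm-poly}: the dominant error is produced by length-one elements $T_j$ with $|x_j|=\rho_x$, yielding the rate $(m(\rho_x)/\rho_x)^n=\rho_x^{-2n}$, while the $n^{-1/2}$ prefactor comes from Stirling asymptotics of the Gamma ratio, by the same mechanism that produces the prefactor in Theorem \ref{thm-asymp-leading}.

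For part (a), fix $z\in \mathbb{T}(0,r)$ with $r>\rho_a$ and divide \eqref{cor-eqn-two} by $z^n$: the identity contribution becomes $1+K_n(z)$, and each of the $s$ correction terms is controlled by $|T_j(z)/z|^n$ up to factors bounded uniformly in $n$ (because $P_n(T_j(z))/T_j(z)^n$ is itself $1+O(1)$). For $\rho_a<r\leq\rho_x$ the correction terms dominate and yield the rate $(m(r)/r)^n$ from the definition \eqref{defmandMr}, while for $r>\rho_x$ the $K_n$-term dominates and gives the $n^{-1/2}(m(\rho_x)/\rho_x)^n$ rate. To promote \eqref{exterior-asymptotics} into the full expansion \eqref{expansion-exterior-asymptotics} on $\Delta(0,\rho_x)$, I identify the rational function $R_j(w,z)$ of \eqref{def-Rjk}--\eqref{def-Rjk-2} as the generating function that encapsulates the single-circle contribution of $T_j$ with $|x_j|=\rho_x$, use the Maclaurin expansion \eqref{coefficients-Rjk} in $w$, aggregate over such $j$ to produce $C_k(z)$, and apply a Laplace/Stirling-type expansion to the underlying Gamma integral to generate the ratio $\Gamma(k+1/2)\Gamma(n-k+3/2)/\Gamma(n+2)$.

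The main obstacle is the identification of $R_j(w,z)$ as the correct per-circle generating kernel, and the verification that elements of $\mathcal{T}$ of length $\geq 2$ contribute strictly smaller orders than the length-one terms which drive \eqref{expansion-exterior-asymptotics}; this in turn relies on Assumption \ref{assumption} to control the tail of the $\mathcal{T}$-series geometrically. Once the single-disk analysis and the separation of length-one from higher-length contributions are in place, the aggregation over circles with $|x_j|=\rho_x$ and the Gamma-ratio expansion proceed in parallel with the asymptotic derivation for $\kappa_n^{-2}$ in Theorem \ref{thm-asymp-leading}.
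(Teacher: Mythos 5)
Your part (b) follows the paper's route: $K_n$ is the correction $f_{n,e}(w)+\tfrac{w}{n+1}f'_{n,e}(w)-1$ attached uniformly to every $\tau$ (evaluated at $\tau(z)$), and the collapse of the inner sum over $\{\tilde\tau\circ T_j\}$ into $P_n(T_j(z))$ via the chain rule is exactly the paper's derivation of \eqref{cor-eqn-two}. The genuine gap is in part (a), for $r\geq\rho_x$. Dividing \eqref{cor-eqn-two} by $z^n$ and bounding $P_n(T_j(z))T_j'(z)/z^n$ termwise by $|T_j(z)/z|^n$ cannot produce the rate $n^{-1/2}(m(\rho_x)/\rho_x)^n=n^{-1/2}\rho_x^{-2n}$ when $r>\rho_x$: by the computation around \eqref{defmj} and \eqref{maxproperty}, $m_j(r)/r$ attains its minimum $|x_j|^{-2}$ only at $r=|x_j|$, so for $r>\rho_x$ and $j$ with $|x_j|=\rho_x$ one has $\max_{|z|=r}|T_j(z)/z|=m_j(r)/r>\rho_x^{-2}$. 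The ``correction terms'' are therefore \emph{larger} than the claimed error, and no termwise estimate of the interior representation can detect the extra $n^{-1/2}$ decay, which is a cancellation effect. Two further problems: $P_n(T_j(z))/T_j(z)^n$ is not $1+O(1)$ uniformly, since $T_j(\T(0,r))$ bounds a disk containing $a_j$ and hence meets $D(0,\rho_a)$, where $P_n(w)/w^n$ is unbounded; and the bound \eqref{estimate-K_n} is only locally uniform on the open disk $D(0,\rho_x)$, degrading on and beyond $\T(0,\rho_x)$.

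What is actually needed, and what the paper does, is an \emph{exterior} representation: starting from \eqref{P_n-for-z-large}, one writes $P_n(z)/z^n-1$ for $z\in\Delta(0,\rho)$ as a Cauchy integral of $f'_{n,o}(\zeta)\zeta^{-n}/(n+1)$ over $\T(0,\rho)$, isolates the length-one contributions $\oint_{\T(0,\rho_x)} T_j'(\zeta)(T_j(\zeta)/\zeta)^n(\zeta-z)^{-1}d\zeta$ for the $j$ with $|x_j|=\rho_x$, and evaluates these by the saddle-point analysis of Proposition \ref{prop-integral-error-1} (deformation to the critical curve through $x_j$, then a Watson-type expansion); this is where the ratio $\Gamma(k+1/2)\Gamma(n-k+3/2)/\Gamma(n+2)\asymp n^{-k-1/2}$, and hence the $n^{-1/2}$, comes from. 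The crude estimate in \eqref{exterior-asymptotics} for $r>\rho_x$ is then a corollary of the expansion \eqref{expansion-exterior-asymptotics}, not a preliminary step to be ``promoted'' into it. The boundary case $r=\rho_x$ is covered by neither your termwise argument nor the expansion: it requires the separate cancellation estimate \eqref{BigO-formula-1} of Proposition \ref{prop-integral-error-2} for the regularized kernel $((T_j(\zeta)/\zeta)^n-(T_j(z)/z)^n)/(\zeta-z)$ with $z$ on the contour, which your proposal does not address. Your treatment of the range $\rho_a<r<\rho_x$ via \eqref{cor-eqn-two}, and the combinatorics of the equivalence of \eqref{cor-eqn-one} and \eqref{cor-eqn-two}, are sound and match the paper.
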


Evaluating \eqref{cor-eqn-two} at $z=0$ yields the curious identity (see \eqref{formula-Tj-1})
\[
P_n(0)=\sum_{j=1}^sr_j^2P_n(c_j).
\]

Theorem \ref{thm-cmcds} tells us that $\lim_{n\to\infty}P_n(z)/z^n=1$ for all $z\in\Delta(0,\rho_a)$, and what happens for $z\in \cj{D(0,\rho_a)}$ has to be deciphered from \eqref{cor-eqn-one}. We will describe the behavior of $P_n$ in $D(0,\rho_a)$ in terms of functions that we now introduce.  

For every $\sigma\in (0,1)$, let 
\begin{align}\label{definition-theta}
\Theta_{\sigma} (t) :=  t \sum_{v \in \Z} \sigma^{v} e^{\sigma^{v} t}, \quad \Re{t}<0,
\end{align}
and for each $j\in \N_s$ with $|a_j|=\rho_a$, and every integer $n\geq 1$, we define the function 
\begin{align*}
F_{j,n}(z):= \sum_{\tau \in \mathcal{T} \setminus \mathcal{T}_j} \frac{\Phi_j '(\tau(z))}{\Phi_j(\tau(z))} \Theta_{\sigma^2_j}(n\alpha_j\Phi_j(\tau(z))) \tau'(z),
\quad  z \in \cj{D(0,\rho_a)}\setminus\{a_j\},
\end{align*}
where 
\begin{align}\label{defTj}
\alpha_j := |a_j|^{-1}- |a_j|,\quad \mathcal{T}_j := \{ T_j \tau : \tau \in \mathcal{T}\}.
\end{align}

We observe that each $\Theta_\sigma$ is multiplicatively periodic, i.e., $\Theta_\sigma(\sigma t)=\Theta_\sigma(t)$. Hence, it is bounded on any cone of the form 
\[
\{t:\pi/2+\epsilon<\arg t< 3\pi/2-\epsilon,\ 0<\epsilon<\pi/2 \}.
\]
Since $\Re(\Phi_j(z))<0$ for every $z\in \cj{D(0,|a_j|)}\setminus\{a_j\}$, it follows that for each $j$, the family of functions $(F_{j,n})_{n\geq 1}$ is bounded on compact subsets of $\cj{D(0,\rho_a)}\setminus \{a_j:|a_j|=\rho_a\}$.

\begin{theorem}\label{thm-cmcds-2}For values of  $z\in \cj{D(0,\rho_a)}$, it happens that
 \begin{equation}\label{behavior-inside-Drhoa}
P_n(z)= z^n+\frac{1}{n} \sum_{j:|a_j|=\rho_a}a_j^{n+1} F_{j,n} (z) + O \left( \frac{\rho_a^n}{n^2}\right)
\end{equation}
uniformly on closed subsets of $\cj{D(0,\rho_a)}\setminus \{a_j:|a_j|=\rho_a\}$ as $n \to \infty$, 
while for every $j$  with $|a_j|=\rho_a$,
\begin{align}\label{behavior-at-points-aj}
P_n(a_j) = \frac{a_j ^n}{1- \sigma_j^2} + O\left( \frac{\rho_a^n}{n}\right).
\end{align}
\end{theorem}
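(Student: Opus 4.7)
The plan is to start from the series representation \eqref{cor-eqn-one} established in Theorem \ref{thm-cmcds}(b) and perform an asymptotic analysis for $z \in \cj{D(0,\rho_a)}$. First I would extract the contribution of $\tau = T_0$, which yields $z^n(1+K_n(z)) = z^n + O(\rho_a^n n^{-1/2}\rho_x^{-2n})$; since $\rho_x>1$, this is easily absorbed into the advertised remainder. For the remaining sum over $\mathcal{T}^*$, I would use the fact that each nontrivial word has a unique decomposition $\tau = T_j^k \tilde\tau$ by its outermost maximal block of a single generator, where $j \in \N_s$, $k \geq 1$, and $\tilde\tau \in \mathcal{T} \setminus \mathcal{T}_j$. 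This reorganizes the sum as
\[
\sum_{j=1}^s \sum_{\tilde\tau \in \mathcal{T}\setminus\mathcal{T}_j} \tilde\tau'(z) \sum_{k \geq 1} T_j^k(\tilde\tau(z))^n (T_j^k)'(\tilde\tau(z))\bigl(1+K_n(T_j^k(\tilde\tau(z)))\bigr).
\]
The $K_n$-correction is negligible by combining \eqref{estimate-K_n} with the absolute convergence supplied by Assumption \ref{assumption}.

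For the inner $k$-sum, I would fix $j$ and $\tilde\tau$, set $w = \tilde\tau(z)$ and $\zeta = \Phi_j(w)$, and exploit the conjugation $T_j^k = \Phi_j^{-1} \circ (\sigma_j^{2k}\,\cdot\,) \circ \Phi_j$ together with $(\Phi_j^{-1})'(t) = a_j\alpha_j/(1+|a_j|t)^2$ to obtain the exact identity
\[
T_j^k(w)^n (T_j^k)'(w) = a_j^{n+1}\alpha_j\,\Phi_j'(w)\,\sigma_j^{2k}\,\frac{(1 + \sigma_j^{2k}\zeta/|a_j|)^n}{(1 + |a_j|\sigma_j^{2k}\zeta)^{n+2}}.
\]
Taylor expansion in $\sigma_j^{2k}$ gives $\sigma_j^{2k} e^{n\alpha_j\sigma_j^{2k}\zeta}(1 + O(\sigma_j^{2k}) + O(n\sigma_j^{4k}))$, and the dominant contribution comes from the critical regime $\sigma_j^{2k} \asymp 1/n$, in which the error factor is $1+O(1/n)$. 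Splitting $\sum_{k\geq 1}$ according to whether $n\sigma_j^{2k}$ is small, of order unity, or large, and using that $\Re(\Phi_j(w))<0$ on $\cj{D(0,|a_j|)}\setminus\{a_j\}$, I would extend the summation from $k \geq 1$ to $k \in \Z$ at the cost of super-exponentially small tails (for $k \leq 0$, $\sigma_j^{2k}\geq 1$ makes $e^{n\alpha_j\sigma_j^{2k}\zeta}$ decay like $e^{-cn}$). This identifies the sum as $(n\alpha_j\zeta)^{-1}\Theta_{\sigma_j^2}(n\alpha_j\zeta)$ up to a relative error $O(1/n)$, producing
\[
\sum_{k \geq 1} T_j^k(w)^n (T_j^k)'(w) = \frac{a_j^{n+1}}{n}\,\frac{\Phi_j'(w)}{\Phi_j(w)}\,\Theta_{\sigma_j^2}(n\alpha_j\Phi_j(w)) + O(|a_j|^{n+1}/n^2).
\]

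Summing over $\tilde\tau \in \mathcal{T} \setminus \mathcal{T}_j$ (convergent by Assumption \ref{assumption}) assembles $F_{j,n}(z)$, and summing over $j$ with $|a_j|=\rho_a$ gives the main term of \eqref{behavior-inside-Drhoa}. Indices $j$ with $|a_j|<\rho_a$ contribute $O(|a_j|^{n+1}/n)$, which is super-exponentially smaller than $\rho_a^n/n^2$ and is absorbed into the remainder. For the boundary case $z = a_j$ in \eqref{behavior-at-points-aj}, where $\Phi_j(a_j)=0$ makes $F_{j,n}$ formally singular, I would evaluate \eqref{cor-eqn-one} directly: since $T_j^k(a_j)=a_j$ and $(T_j^k)'(a_j)=\sigma_j^{2k}$ exactly, the $\tilde\tau=T_0$ branch sums to $a_j^n\bigl(1+\sum_{k\geq 1}\sigma_j^{2k}\bigr) = a_j^n/(1-\sigma_j^2)$ in closed form, while every other $\tilde\tau \in \mathcal{T}\setminus(\mathcal{T}_j\cup\{T_0\})$ is evaluated at $w=\tilde\tau(a_j)\neq a_j$, where the previous analysis applies and contributes only $O(\rho_a^n/n)$.

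The main obstacle will be the uniformity of the inner $k$-sum asymptotics over a region that degenerates as $z\to a_j$: the composite error $(1+O(\sigma_j^{2k})+O(n\sigma_j^{4k}))$ must be summed against $\sigma_j^{2k}e^{n\alpha_j\sigma_j^{2k}\zeta}$ and shown to contribute only $O(n^{-2})$ uniformly on compact subsets of $\cj{D(0,\rho_a)}\setminus\{a_j:|a_j|=\rho_a\}$. Balancing the expansion error as $\zeta$ approaches $0$ or the boundary of the half-plane $\Re\Phi_j(w)<0$, and propagating this uniformity through the $\tilde\tau$-sum via Assumption \ref{assumption}, will be the key technical points.
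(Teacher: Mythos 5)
Your proposal is correct and, at the structural level, coincides with the paper's proof: both pass from \eqref{cor-eqn-one} to $P_n(z)=z^n+\sum_{\tau\in\mathcal{T}^*}\tau(z)^n\tau'(z)+O\bigl(n^{-1/2}(\rho_a/\rho_x^2)^n\bigr)$ on $\cj{D(0,\rho_a)}$, decompose $\mathcal{T}^*$ by terminal operator and peel off the maximal outer power of $T_j$ (the paper's $\mathcal{T}_j=\bigcup_{v\geq 1}\{T_j^v\tau:\tau\in\mathcal{T}\setminus\mathcal{T}_j\}$), and reduce everything to the one-generator sum $\sum_{v\geq1}T_j^v(w)^n(T_j^v)'(w)$, identified with $\frac{a_j^{n+1}}{n}\frac{\Phi_j'(w)}{\Phi_j(w)}\Theta_{\sigma_j^2}(n\alpha_j\Phi_j(w))+O(|a_j|^n/n^2)$. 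Where you genuinely diverge is in how that last identification is proved: the paper isolates it as Proposition \ref{cdcd} and proves it via the integral representations \eqref{Theta} and \eqref{integral-represetation-3} (Abel summation against the step function $S_j$, following \cite{DMN-ARXIV}), whereas you propose a direct term-by-term comparison with $\sigma_j^{2k}e^{n\alpha_j\sigma_j^{2k}\zeta}$, splitting the $k$-range by the size of $n\sigma_j^{2k}$ and completing the sum to $k\in\Z$. Your exact identity for $T_j^k(w)^n(T_j^k)'(w)$ is right, and the regime analysis does close (the accumulated error $\sum_k\sigma_j^{4k}(1+n\sigma_j^{2k})e^{n\alpha_j\sigma_j^{2k}\Re\zeta}$ is $O(n^{-2})$ once $\Re\zeta$ is bounded away from $0$), so this is a viable, more hands-on substitute for the paper's lemma; the paper's route buys cleaner uniform constants via the single estimate \eqref{estimate-exponential}. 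The uniformity obstacle you flag is resolved in the paper by a geometric observation you should make explicit: every $\tau\notin\mathcal{T}_j$ maps $E_\epsilon:=\cj{D(0,\rho_a)}\setminus\bigcup_{l:|a_l|=\rho_a}D(a_l,\epsilon)$ into the \emph{fixed} compact set $\cj{D(0,\rho_a)}\setminus D(a_j,\epsilon)$ (either $\tau=T_0$, or $\tau(\cj{D(0,\rho_a)})\subset D(c_m,r_m)$ with $m\neq j$), so the inner-sum asymptotics are only ever invoked on one compact set on which $\Re\Phi_j$ is bounded away from zero, and the resulting uniform $O(|a_j|^n/n^2)$ is then summed against $\sum_{\tau}|\tau'(z)|$ via Assumption \ref{assumption}. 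One small correction: the contribution of the $j$ with $|a_j|<\rho_a$ is not directly $O(|a_j|^{n+1}/n)$, since your inner-sum asymptotic is valid only for $w\in\cj{D(0,|a_j|)}$, which need not contain $\tilde\tau(z)$ when $|z|\leq\rho_a$; the correct (and sufficient) bound is $O(\varrho^n)$ with $\varrho:=\max\{|T_j(z)|:|z|\leq\rho_a,\ |a_j|<\rho_a\}<\rho_a$, as in the paper. Finally, your closed-form summation of $\sum_{k\geq0}\sigma_j^{2k}$ at $z=a_j$ is equivalent to the paper's device of evaluating \eqref{cor-eqn-two} at $a_j$ and solving $P_n(a_j)=a_j^n+\sigma_j^2P_n(a_j)+O(\rho_a^n/n)$.
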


Theorems \ref{thm-asymp-leading}, \ref{thm-cmcds}, and \ref{thm-cmcds-2} will all be deduced from  series representations for $P_n$ and $\kappa_n$  given below in Theorem \ref{thm-poly}. As such, Theorem \ref{thm-poly} could be regarded as the main result of this paper.  
 
We finish this section with a few comments on the zero distribution of $P_n$. If $z_{1,n},\ldots,z_{n,n}$ denote the $n$ zeroes of $P_n$, we let
\[
\nu_n:=\frac{1}{n}\sum_{k=1}^n\delta_{z_{k,n}}, \quad n\geq 1,
\]
where $\delta_z$ is the unit point mass at $z$. The weak-star convergence of the sequence $(\nu_n)$ to the measure $\nu$ (symbolically, $\nu_n\wsc \nu$)  means that for every  function $f$ compactly supported and continuous in $\mathbb{C}$, $\lim_{n\to\infty}\int f d\nu_n=\int fd\nu$. The measure $\nu$ is said to be a weak-star limit point of $(\nu_n)$ if there is a subsequence $(n_k)$ of the natural numbers such that $\nu_{n_k}\wsc \nu$.  

Observe that because of \eqref{exterior-asymptotics}, for any given $r>\rho_a$, $P_n$ will cease to have zeroes on $\Delta(0,r)$ once $n$ is large enough, so that every weak-star limit point of $(\nu_n)$ must be supported on the closed disk $\cj{D(0,\rho_a)}$.  

For $\caliD$ as in \eqref{notation:CMCD} and satisfying Assumption \ref{assumption}, there is always a subsequence $(n_k)$ of the sequence of natural numbers such that 
\begin{align}\label{weakconvergence}
\nu_{n_k}\wsc \nu_{\rho_a},
\end{align}
where $\nu_{\rho_a}$ is the arclength measure of the circle $\T(0,\rho_a)$ divided by $2\pi\rho_a$. To prove this convergence, one may argue just as in the proof of Proposition 3.2 of \cite{SS}, where  \eqref{weakconvergence} is proven for a CMCD bounded by two non-concentric circles, that is, when 
\begin{align}\label{D-two-circles}
\mathcal{D} = D(0,1) \setminus \cj{D(c_1, r_1)}, \quad c_1\not=0.
\end{align}
What is mainly needed  is the ability of locating the first singularities of the kernel $\sum_{n=0}^\infty \cj{p_n(\zeta)}p_n(\cdot)$, which can be done using the representation  \eqref{kernel-relation-pn}-\eqref{defkernel} given below in Section \ref{section-reproducing}. With the arguments of \cite{SS} and Lemma 4.3 of \cite{MSS}, one can also verify that $\nu_{\rho_a}$ is the only weak-star limit point of $(\nu_n)$ that is supported in $\T(0,\rho_a)$.

With the availability of Theorem \ref{thm-cmcds-2},  more can be said when $\mathcal{D}$ is as in \eqref{D-two-circles}. In this case, we get from  \eqref{behavior-inside-Drhoa} that
\begin{equation}\label{behavior-inside-Drhoa-one-disk}
n a_1^{-n-1}P_n(z)=  \frac{\Phi_1 '(z)}{\Phi_1(z)} \Theta_{\sigma_1^2}(n\alpha_1\Phi_1(z)) + O \left( n^{-1}\right)
\end{equation}
locally uniformly on $D(0,\rho_a)$ as $n\to\infty$.

Let $(n_k)$ be a subsequence of the natural numbers and let $\sigma\in (0,1)$. As proven in \cite[Theorem 2]{DMN},  the sequence of functions $(\Theta_\sigma(n_k t))_{k=1}^\infty$ converges normally on $\Re t<0$ if and only if there exists $q\in [0,1)$ such that 
$\log_{\sigma}n_k\to q$ modulo $1$, that is, 
\begin{align}\label{mod1-limit}
 \lim_{k\to \infty} e^{2\pi i (\log_\sigma n_k- q)}=1,
\end{align}
in which case 
\[
\lim_{k\to\infty}\Theta_\sigma(n_kt)=\Theta_\sigma(\sigma^qt)
\]
locally uniformly on $\Re t<0$. Also, for every $q\in [0,1)$, it is possible to find  a subsequence $(n_k)$ for which \eqref{mod1-limit} is true.  By \eqref{behavior-inside-Drhoa-one-disk}, we conclude that the family
\[
\caliF=\left\{ \frac{\Phi_1 '(z)}{\Phi_1(z)} \Theta_{\sigma_1^2}(\sigma_1^{2q}\alpha_1\Phi_1(z)):\ q\in[0,1)\right\} 
\]
comprises all the normal limits that the sequence $(n a_1^{-n-1}P_n)$ has in $D(0,\rho_a)$.

By Helly's theorem \cite[Theorem 0.1.3]{ST}, every subsequence of $(\nu_n)$ has in turn a subsequence, say $(\nu_{n_k})$,  weakly-star converging to some measure $\nu$ supported in $\cj{D(0,\rho_a)}$. By possibly having to move along a subsequence of $(n_k)$, we can assume that the sequence $(n_k a_1^{-n_k-1}P_{n_k})$ converges normally in $D(0,\rho_a)$ to an element of the family $\caliF$. On a fixed disk $D(0,r)$ of radius $r<\rho_a$, this element will have a finite number of zeroes, which by Hurwitz's theorem, is the exact same number of zeroes that every $P_{n_k}$ will have in $D(0,r)$ for all $k$ large enough. Therefore, $\nu_{n_k}(D(0,r))\to 0=\nu(D(0,r))$, which means that $\nu$ is supported on $\T(0,\rho_a)$, and so $\nu=\nu_{\rho_a}$. The conclusion is that for $\mathcal{D}$ as in \eqref{D-two-circles}, the whole sequence $(\nu_n)$ converges to $\nu_{\rho_a}$.

It might be possible to extract a similar conclusion from (1.31) for a CMCD bounded by more than two circles.  In this case, however, the higher complexity of the functions $F_{j,n}$ makes the analysis more involved, and we prefer to address that case in a separate work.

\section{The convergence of $\sum_{\tau\in \caliT}|\tau'|$}
The main purpose of this section is to prove Proposition \ref{thm-converge-cases}. We first establish a few facts that will be of use in later sections as well.

The inverse of $T_j$ will be denoted by $T_j^{-1}$. If $v$ is a positive integer, the composition of $T_j$ with itself a number $v$ of times will be denoted by $T_j^v$, while if $v$ is a negative integer, then $T^{v}_j$ will denote the inverse of $T^{-v}_j$ (i.e., the composition of $T^{-1}_j$ with itself a number $-v$ of times). It follows from the definition \eqref{def-Tj} that 
\[
T_j ^v(z) = \Phi_j^{-1}(\sigma^{2v} \Phi_j(z)),
\quad
v \in \Z,
\]
with the understanding that $T_j^0(z)=T_0(z)\equiv z$.

Note that since $\Phi_j(D(c_j,r_j))=D(0,\sigma_j)$, we have
\begin{align}\label{inclusion}
T_j(D(0,1))\subset  D(c_j,r_j),\qquad j\in \N_s.
\end{align}

\begin{lemma} \label{inclusions}For every $j\in \N_s$, we have the inclusions
 \begin{equation}\label{between}
 T_j(\overline{D(0,r)}) \subset D(0,r), \quad |a_j| < r < 1/|a_j|,
\end{equation}
\begin{equation}\label{between-inverse} 
T_j^{-1}(\cj{\Delta(0,r)})\subset \Delta(0,r), \quad |a_j| < r < 1/|a_j|,
\end{equation}
and
\begin{equation}\label{compact1}
T_j(\overline{D(0,|a_j|)}\setminus\{a_j\})\subset D(0,|a_j|).
\end{equation}
\end{lemma}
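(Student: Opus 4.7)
My plan is to reduce all three inclusions to a single geometric observation about how the scaling $w\mapsto \sigma_j^2 w$ acts on a closed Euclidean disk that meets the origin. Writing $T_j = \Phi_j^{-1}\circ(\sigma_j^2\cdot)\circ\Phi_j$ and using that the only pole of $\Phi_j$ is $1/\cj{a}_j$ (of modulus $1/|a_j|$), the map $\Phi_j$ is analytic on $\cj{D(0,r)}$ for every $r<1/|a_j|$; since M\"obius transformations send circles to circles, the image
\[
E_r:=\Phi_j(\cj{D(0,r)})=\cj{D(c,\rho)}
\]
is then a genuine closed disk. Because $\Phi_j(a_j)=0$, the origin lies in $E_r$, with $|c|<\rho$ precisely when $|a_j|<r$ (so $a_j$ is strictly interior to $D(0,r)$) and $|c|=\rho$ precisely when $r=|a_j|$.

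The whole argument will then rest on the triangle inequality applied to $w\in\sigma_j^2 E_r$:
\[
|w-c|\le |w-\sigma_j^2 c|+|\sigma_j^2 c-c|\le \sigma_j^2\rho+(1-\sigma_j^2)|c|\le \rho.
\]
When $|c|<\rho$, the last inequality is strict, so $\sigma_j^2 E_r\subset D(c,\rho)=\Phi_j(D(0,r))$; composing with $\Phi_j^{-1}$ gives \eqref{between}. For the boundary case $r=|a_j|$ (so $|c|=\rho$), I will need the equality analysis of the triangle inequality: equality forces $w,\sigma_j^2 c,c$ to be collinear with $\sigma_j^2 c$ between the other two, and together with $|w-\sigma_j^2 c|=\sigma_j^2\rho$ and $|c|=\rho$ this pins down $w=0$. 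Therefore $\sigma_j^2 E_{|a_j|}\setminus\{0\}\subset \Phi_j(D(0,|a_j|))$, and pulling back by $\Phi_j^{-1}$ (which sends $0$ to $a_j$) yields \eqref{compact1}.

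Finally, \eqref{between-inverse} will follow as the set-theoretic contrapositive of \eqref{between}: $T_j$ is a bijection of $\cj{\C}$, so $T_j(\cj{D(0,r)})\subset D(0,r)$ is equivalent to $T_j^{-1}(\cj{\C}\setminus D(0,r))\subset \cj{\C}\setminus \cj{D(0,r)}$, i.e.\ $T_j^{-1}(\cj{\Delta(0,r)})\subset \Delta(0,r)$ (and one checks that $T_j^{-1}(\infty)=1/\cj{a}_j$ lies in $\Delta(0,r)$ since $1/|a_j|>r$). The only genuinely delicate step I anticipate is the equality analysis in the triangle inequality needed for \eqref{compact1}; everything else drops out once one notes that $\Phi_j(\cj{D(0,r)})$ is an honest closed disk whose position relative to $0$ is dictated by whether $a_j$ sits strictly inside or exactly on $\T(0,r)$.
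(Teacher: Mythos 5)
Your proof is correct and follows essentially the same route as the paper's: both conjugate $T_j$ by $\Phi_j$ to reduce everything to how the dilation $w\mapsto\sigma_j^2w$ acts on the closed disk $\Phi_j(\cj{D(0,r)})$ containing the origin, with the boundary case $r=|a_j|$ (where $0$ sits on the image circle) giving \eqref{compact1}; you merely supply the triangle-inequality details the paper leaves implicit and obtain \eqref{between-inverse} as a set-theoretic contrapositive of \eqref{between} rather than by the paper's parallel argument with $s_j^{-1}$. One small slip: $T_j^{-1}(\infty)$ is the pole of $T_j$, which is $1/\cj{c}_j$ (of modulus $\geq 1/|a_j|>r$), not the fixed point $1/\cj{a}_j$ — but that parenthetical check is redundant anyway, since your contrapositive already places $T_j^{-1}(\infty)$ in $\Delta(0,r)$.
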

\begin{proof}
 We begin by noticing that  the zero $a_j$ and the pole $\cj{a}_j^{-1}$ of $\Phi_j$ are the two fixed points of $T_j$:
\begin{align*}
T_j(a_j)=a_j,\quad T_j(\cj{a}_j^{-1})=\cj{a}_j^{-1},\qquad j\in \N_s.
\end{align*}

Let $r$ be such that $|a_j|<r<1/|a_j|$. Since the zero $a_j$ of $\Phi_j$ belongs to $D(0,r)$ and  the pole $\cj{a}_j^{-1}$ of $\Phi_j$  belongs to $\Delta(0,r)$, we see that $0$  belongs to the interior of the closed disk $\Phi_j(\overline{D(0,r)})$, and so if $s_j(z): = \sigma_j ^2z$, then 
\[
0\in \inte\left[s_j (\Phi_j(\overline{D(0,r)})\right],\quad s_j (\Phi_j(\overline{D(0,r)})) \subset \Phi_j(D(0,r)),
\]  
and
\[
\infty\in\inte\left[ s^{-1}_j (\Phi_j(\overline{\Delta(0,r)})) \right], \quad s^{-1}_j (\Phi_j(\overline{\Delta(0,r)})) \subset \Phi_j(\Delta(0,r)),
\]
where we are using the notation $\inte[A]$ to mean the interior of the set $A$.

Applying $\Phi_j^{-1}$ to both sides of the previous relations we conclude that
\begin{equation}\label{between-1}
a_j\in \inte\left[T_j(\overline{D(0,r)})\right],
\end{equation}
\begin{equation}\label{between-inverse-1}
\cj{a}_j^{-1}\in \inte\left[T_j^{-1}(\cj{\Delta(0,r)})\right],
\end{equation}
and that both \eqref{between} and \eqref{between-inverse} hold true.

Similarly, when  $a_j\not=0$, $\Phi_j(\overline{D(0,|a_j|)})=\cj{B}_j$ and $\Phi_j(a_j)=0$,  where  
\begin{align}\label{disk-Bj}
B_j:=  
\left\{
t: 
\left|
t+ \frac{|a_j|}{1+|a_j|^2}
\right|
< \frac{|a_j|}{1+|a_j|^2}
\right\}.
\end{align}
Clearly, $s_j(\cj{B}_j\setminus\{0\})\subset B_j$,  and  applying $\Phi_j^{-1}$ to  this inclusion yields \eqref{compact1}. That \eqref{compact1} is also true when $a_j=0$ is trivial since in such a case $T_j(z)=\sigma_j^2z$. 
\end{proof}

The family $\mathcal{T}$ has been defined in \eqref{def-T-family} as consisting of the identity $T_0(z)\equiv z$ plus all finite compositions (strings) of transformations $T_j$, $j\in\N_s$. Any two different such strings represent different functions. This follows, for instance, as a corollary of the following lemma.
\begin{lemma} \label{equality-strings}Suppose that $\tau_1=T_{j_n}T_{j_{n-1}}\cdots T_{j_1}$ and $\tau_2=T_{k_m}T_{k_{m-1}}\cdots T_{k_1}$ are 
such that $\tau_1(\caliD)\cap\tau_2(\caliD)\not=\emptyset$. Then $n=m$ and $j_l=k_l$ for all $l\in\N_m$. 
\end{lemma}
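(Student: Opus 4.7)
The approach is to isolate the ``outermost'' transformation in each string and peel it off. The key geometric fact I will use is that, by \eqref{inclusion}, each $T_j$ sends $D(0,1)$ (and hence $\caliD\subset D(0,1)$) into the disk $D(c_j,r_j)$, and by \eqref{notation:CMCD} the closed disks $\cj{D(c_j,r_j)}$ are pairwise disjoint.

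First I would prove, by induction on the length $n$, that for every string $\tau=T_{j_n}T_{j_{n-1}}\cdots T_{j_1}\in \caliT^*$ one has $\tau(\caliD)\subset D(c_{j_n},r_{j_n})$. The base case $n=1$ is immediate from \eqref{inclusion} since $\caliD\subset D(0,1)$; for the inductive step, the induction hypothesis applied to $T_{j_{n-1}}\cdots T_{j_1}$ yields containment in $D(c_{j_{n-1}},r_{j_{n-1}})\subset D(0,1)$, and a second application of \eqref{inclusion} to $T_{j_n}$ completes it.

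Now assume $\tau_1(\caliD)\cap\tau_2(\caliD)\neq\emptyset$. Combined with the containment just proved, the two open disks $D(c_{j_n},r_{j_n})$ and $D(c_{k_m},r_{k_m})$ must intersect, so the disjointness in \eqref{notation:CMCD} forces $j_n=k_m$. Pick $w=\tau_1(z_1)=\tau_2(z_2)$ with $z_1,z_2\in \caliD$. Since $T_{j_n}=T_{k_m}$ is a Möbius transformation, hence injective, applying $T_{j_n}^{-1}$ to $w$ yields
\[
T_{j_{n-1}}\cdots T_{j_1}(z_1)=T_{k_{m-1}}\cdots T_{k_1}(z_2),
\]
so the truncated strings applied to $\caliD$ again share a common point. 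I would then iterate this cancellation, formally by induction on $\min(n,m)$. The ``equal length'' conclusion $n=m$ comes from ruling out the asymmetric base case where one reduced string is the empty composition $T_0(z)\equiv z$ while the other still has at least one factor $T_l$: the intersection would then require $\caliD\cap D(c_l,r_l)\neq\emptyset$, impossible since every $\cj{D(c_l,r_l)}$ has been removed from $D(0,1)$ to form $\caliD$. Once $n=m$ is established, the same iterated cancellation delivers $j_l=k_l$ for every $l\in\N_m$.

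The main obstacle is purely one of bookkeeping: the induction must be set up so that cancellation is only applied when both strings still have length at least one, and the asymmetric empty-versus-nonempty base case has to be discarded via the disjointness $\caliD\cap D(c_l,r_l)=\emptyset$. With that handled, the rest follows cleanly from \eqref{inclusion} and the injectivity of each Möbius transformation $T_j$.
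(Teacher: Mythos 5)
Your proposal is correct and follows essentially the same route as the paper's own proof: identify the outermost factors via the containment $\tau(\caliD)\subset D(c_{j_n},r_{j_n})$ coming from \eqref{inclusion}, use the pairwise disjointness of the removed disks to match them, peel them off by injectivity, and rule out unequal lengths because $\caliD$ is disjoint from every $D(c_l,r_l)$. The only cosmetic difference is that the paper handles the length bookkeeping by appending the identity $T_0$ to both strings rather than treating the empty-versus-nonempty case separately.
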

\begin{proof}Lets us write
\[
\tau_1= T_{j_n}T_{j_{n-1}}\cdots T_{j_1}T_{j_0},\quad \tau_2=T_{k_m}T_{k_{m-1}}\cdots T_{k_1}T_{k_0}, 
 \]
 where for convenience we have appended to the right of the original strings the identity map, that is, $T_{j_0}=T_{k_0}=T_0$. Let us assume that $m\leq n$ and that $\tau_1(\caliD)\cap\tau_2(\caliD)\not=\emptyset$. By \eqref{inclusion}, $\tau_1(\caliD) \subset D(c_{j_n},r_{j_n}) $ and $\tau_2(\caliD) \subset D(c_{k_m},r_{k_m}) $. Because $D(c_{j_n},r_{j_n})\cap D(c_{k_m},r_{k_m})=\emptyset$ if $j_n\not=k_m$, it follows that $j_n=k_m$, and so $ T^{-1}_{j_n}\tau_1(\caliD)\cap T^{-1}_{k_m}\tau_2(\caliD)\not=\emptyset$, or equivalently,
 \[
T_{j_{n-1}}\cdots T_{j_1}T_{j_0}(\caliD)\cap T_{k_{m-1}}\cdots T_{k_1}T_{k_0}(\caliD) \not=\emptyset
 .\]
 Applying repeatedly the same argument, we conclude that 
$T_{j_{n-l}}=T_{k_{m-l}}$ for all $0\leq l\leq m-1$ and that $T_0(\caliD)\cap T_{j_{n-m}}\cdots T_{j_0}(\caliD)=\emptyset$. Since $T_0(\caliD)=\caliD$, again by \eqref{inclusion}, this is only possible if $n=m$.
\end{proof}

It will be convenient to have the following definition at hand.
\begin{definition}\label{defelln}
For every element $\tau\in \mathcal{T}^*$ we define  $\ell(\tau)$ to be the  number $n$ of operators $T_j$ figuring in the representation $\tau=T_{j_n}T_{j_{n-1}}\cdots T_{j_1}$.
\end{definition}

\subsection{Proof of Proposition \ref{thm-converge-cases}}\label{section3.1}
Let us begin by establishing that  the normal convergence in $D(0,\rho_a^{-1})$ of the series $\sum_{\tau \in \mathcal{T}} |\tau'(z)|$ is equivalent to its convergence at some point of said disk. It is not difficult to see that $T_j(\cj{D(0,\rho_a^{-1})}) \subset \cj{D(0,\rho_a^{-1})}$ for all $j\in\N_s$ (this follows, for instance,  as a consequence of the inclusion \eqref{between} in Lemma \ref{inclusions} above). Since each $\tau\in\caliT^*$ is a composition of $T_j$'s, it follows that $\tau(\cj{D(0,\rho_a^{-1})}) \subset \cj{D(0,\rho_a^{-1})}$ for all $\tau\in \caliT$, so that the pole of every such $\tau$ lies in $\Delta(0,\rho_a^{-1})$.

 Since each $\tau\in\caliT$ is a M\"{o}bius transformation whose pole $p_\tau$ satisfies that $\rho_a^{-1}<p_\tau\leq \infty$, we can write its derivative $\tau'$ in the form 
\[
\tau'(z) = \frac{\gamma_\tau}{(1-z/p_\tau)^2}.
\]
Then, for every  $\rho\in (0,\rho_a^{-1})$ and any collection of points $\{z_\tau\}_{\tau\in \caliT}\subset D(0,\rho)$, we have 
\begin{align}\label{series-sandwich}
\frac{1}{(1+\rho\rho_a)^2}\sum_{\tau\in\caliT}|\gamma_\tau|\leq  \sum_{\tau\in\caliT}|\tau'(z_\tau)|\leq \frac{1}{(1-\rho\rho_a)^2} \sum_{\tau\in\caliT}|\gamma_\tau|.
\end{align}
Hence, the normal convergence of $\sum_{\tau\in\caliT}|\tau'(z)|$ in $D(0,\rho_a^{-1})$ is equivalent to the convergence of $\sum_{\tau\in\caliT}|\gamma_\tau|$, which is equivalent to the convergence of  $\sum_{\tau\in\caliT}|\tau'(z)|$ at some point $z\in D(0,\rho_a^{-1})$.

\emph{Proof of part i).} If all the disks $D(c_j,r_j)$, $j\in \N_s$, have their center $c_j\in (-1,1)$, then  $\tau((-1,1))\subset(-1,1)$ for every $\tau\in \caliT$. Let  $I$ be a closed interval contained in $\caliD\cap (-1,1)$.  By Lemma \ref{equality-strings},  $\{\tau(I)\}_{\tau\in\caliT}$ is a disjoint collection of closed subintervals of $(-1,1)$. Hence,
\[
\sum_{\tau\in\caliT}\int_I|\tau'(x)|dx=\sum_{\tau\in\caliT}\int_{\tau(I)}dx\leq 2. 
\]
By the mean value theorem, for every $\tau\in \caliT$, we can find $x_\tau\in I$ such that $\int_I|\tau'(x)|dx=|\tau'(x_\tau)|\int_{I}dx$, which in view of \eqref{series-sandwich} gives
\begin{align*}
\sum_{\tau\in\caliT}|\gamma_\tau|\leq  (1+\rho_a)^2\sum_{\tau\in\caliT}|\tau'(z_\tau)|\leq \frac{2(1+\rho_a)^2}{\int_{I}dx},
\end{align*}
that is, Assumption \ref{assumption} holds true in this case.

\emph{Proof of part ii).}  Let us assume \eqref{ineq-radii-condition}. From the definition of $T_j$ in \eqref{def-Tj} and the relations \eqref{formulas-for-c-and-r}, we find 
\begin{align}\label{formula-Tj-1}
T_j(z)=\frac{(\sigma_j^2-|a_j|^2)z+a_j(1-\sigma_j^2)}{1-|a_j|^2\sigma_j^2-\cj{a}_j(1-\sigma_j^2)z}=\frac{(r_j^2-|c_j|^2)z+c_j}{1-\cj{c}_jz}.
\end{align}
Hence, $T_j '(z)  =  
r_j^2( 1-\cj{c}_jz)^{-2}$, and since $|c_j|\leq |a_j|\leq \rho_a$, we have 
\begin{align} \label{derivativeTjbound}
|T_j'(z)|\leq r_j^2(1-|c_j|\rho_a)^{-2},\quad |z|\leq \rho_a,\quad j\in \N_s.
\end{align}

If $\tau\in\caliT^*$ is of length $\ell(\tau)=n$, then $\tau=T_{j_n}T_{j_{n-1}}\cdots T_{j_1}$ and 
\[
|\tau'(z)|=|T'_{j_n}(\tau_1(z))||T'_{j_{n-1}}(\tau_2(z))|\cdots|T'_{j_2}(\tau_{n-1}(z))||T'_{j_1}(z)|
\]
where $\tau_k=T_{j_{n-k}}\cdots T_{j_1}$, $k=1,\ldots, n-1 $. Since $\tau(\cj{D(0,\rho_a)})\subset \cj{D(0,\rho_a)}$ for every $\tau\in \caliT$, it follows from \eqref{derivativeTjbound}
that if $\ell(\tau)=n$, then
\[
|\tau'(z)|\leq  \prod_{k=1}^n\frac{r_{j_k}^2}{(1-|c_{j_k}|\rho_a)^{2}}. 
\]
Therefore, 
\[
 \sum_{\tau\in\caliT^*}|\tau'(z)|=\sum_{n=1}^{\infty}\sum_{\tau:\ell(\tau)=n}|\tau'(z)|\leq \sum_{n=1}^{\infty}\left(\sum_{j=1}^s\frac{r_{j}^2}{(1-|c_{j}|\rho_a)^{2}}\right)^n <\infty.
\]

\emph{Proof of part iii).} Let $\mathcal{D}$ (as given by \eqref{notation:CMCD}) and 
\begin{equation*}
\wt{\mathcal{D}} := D(0,1) \setminus  \bigcup_{j=1} ^s \cj{D(\tilde{c}_j, \tilde{r}_j) }
\end{equation*}
be two CMCDs such that $\mathcal{D}=\Psi(\wt{\mathcal{D}})$ for some automorphism $\Psi$ of the unit disk $D(0,1)$. We assume the labeling is such that 
\[
 \Psi(D(\tilde{c}_j, \tilde{r}_j))=D(c_j, r_j),\qquad j\in\N_s.
\]
Let the maps $T_j$ and the family $\caliT^*$ associated to $\mathcal{D}$ be as introduced in \eqref{def-Tj}-\eqref{def-T*-family}, and let $\wt{T}_j$ and $\wt{\caliT}^*$ denote those corresponding to $\wt{\mathcal{D}}$. Since $\Phi_j$ takes $D(c_j,r_j)$ onto $D(0,\sigma_j)$, we see that $\wt{\Phi}_j=\Phi_j\circ\Psi$ takes $D(\tilde{c}_j, \tilde{r}_j)$ onto $D(0,\sigma_j)$. Hence,
\[
 \wt{T}_j(w)=\wt{\Phi}^{-1}_j(\sigma_j^2\wt{\Phi}_j(w))=(\Psi^{-1}\circ T_j\circ\Psi)(w)
\]
which implies that 
\[
 \wt{\caliT}^*=\{\Psi^{-1}\circ \tau \circ\Psi:\tau\in \caliT^*\}, \quad \caliT^*=\{\Psi\circ \tilde{\tau} \circ\Psi^{-1}:\tilde{\tau}\in \wt{\caliT}^*\}.
\]

Since $\Psi'$ and $(\Psi^{-1})'$ are bounded in $D(0,1)$, it follows that $ \sum_{\tau\in \caliT^*}|\tau'(z)|$ converges for some $z\in D(0,1)$ if and only if $\sum_{\tilde{\tau}\in \wt{\caliT}^*}|\tilde{\tau}'(w)|$ converges for some $w\in D(0,1)$.

\section{The reproducing kernel $\mathcal{K}_{\mathcal{D}}$}\label{section-reproducing}

For a bounded domain $D$, let us denote by $L^2_D$ the linear space of functions that are analytic and square integrable on $D$. Let the inner product $\langle \cdot, \cdot\rangle_D$ and its associated norm be given by 
\begin{equation*}
\langle f, g \rangle_D
=
\int_{D} f(z)\overline{g(z)}  dA(z),\quad \|f\|_D=\sqrt{\langle f, f \rangle_D},\quad f,\,g\in L^2_D.
\end{equation*}
 
The space $L^2_D$ under $\langle \cdot, \cdot\rangle_D$ is a Hilbert space, and the polynomials form a complete subspace of $L^2_D$ if $D$ is, for instance, the interior of a Jordan curve  \cite[Chap. I]{Gaier}. 

The Bergman kernel $K_D(z,\zeta)$ for $D$ is a function analytic in $z$ and anti-analytic in $\zeta$, that is uniquely determined by the reproducing property
\[
 f(z)=\int_D f(\zeta)K_D(z,\zeta)dA(\zeta), \qquad z\in D,\ f\in L^2_{D},
\]
and if $D'$ is a bounded domain and $\Phi:D'\to D$ is a conformal map of $D'$ onto $D$, then 
\begin{align}\label{relating-kernels}
 K_{D'}(z,\zeta)=\Phi'(z)\cj{\Phi'(\zeta)}K_D(\Phi(z),\Phi(\zeta)),
\end{align}
see, e.g.,  \cite[Chap. I,  \S 5]{Gaier}.

 Let $\caliD$ be a  CMCD. Because of the equivalence of the norms $\|\cdot\|_\caliD$ and  $\|\cdot\|_{D(0,1)}$, the space $L^2_{D(0,1)}$, when endowed with the inner product $\langle \cdot,  \cdot\rangle_\mathcal{D}$, forms
a Hilbert space, and point evaluation functionals acting on $L^2_{D(0,1)}$ (under the $\|\cdot\|_\caliD$ norm) happen to be bounded.

The Riesz representation theorem then guarantees the existence of a unique function $\mathcal{K}_{\caliD}(z,\zeta)$  defined for $z$ and $\zeta$ in $D(0,1)$, analytic in $z$ and anti-analytic in $\zeta$, characterized by the reproducing property
\begin{equation*}
f(z) = \int_{\caliD} f(\zeta)  \mathcal{K}_{\caliD}(z,\zeta)  dA(\zeta),
\quad
z\in D(0,1),\ f\in L^2_{D(0,1)}.
\end{equation*}

Because the polynomials form a complete subspace of $L^2_{D(0,1)}$ under $\|\cdot\|_{D(0,1)}$, they also form a complete subspace of $L^2_{D(0,1)}$ under $\|\cdot\|_\caliD$, and so we have 
\begin{align}\label{kernel-relation-pn}
\mathcal{K}_\caliD(z,\zeta)=\sum_{n=0}^\infty p_n(z,\caliD)\cj{p_n(\zeta,\caliD)}, \quad z,\zeta \in D(0,1).
\end{align}

If $\caliD$ has the simpler form \eqref{annulus}, the corresponding orthonormal polynomials $p_n$ are given by  \eqref{PnSingleDisk}, so that in this case\begin{align*}
\mathcal{K}_\caliD(z,\zeta)={} &\sum_{n=0}^\infty \frac{(n+1)(z\cj{\zeta})^n}{1-r_1^{2n+2}}
=\frac{1}{(1-z\cj{\zeta})^2}+r_1^2\mathcal{K}_\caliD(r_1 z,r_1 \zeta)\\
={}& \sum_{j=0}^\infty \frac{r_1^{2j}}{(1-r_1^{2j}z\cj{\zeta})^2}.
\end{align*}

The following proposition extends this formula to an arbitrary CMCD. It will play an important  role in the construction of the series expansion for $P_n$ carried out in the next section.

\begin{proposition} 
If $\mathcal{D}$ is a CMCD for which Assumption \ref{assumption} holds, then we have the representation 
\begin{align}\label{defkernel}
\mathcal{K}_{\caliD}(z,\zeta)  = 
\sum_{\tau \in \mathcal{T}} 
\frac{ \tau'(z) }{(1-\tau (z) \cj{\zeta})^2}, \quad z,\zeta \in D(0,1).
\end{align}
\end{proposition}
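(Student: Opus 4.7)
My plan is to identify $S(z,\zeta) := \sum_{\tau\in\caliT}\tau'(z)/(1-\tau(z)\cj{\zeta})^2$ with $\caliK_\caliD(z,\zeta)$ by verifying that $S$ is a reproducing kernel for $(L^2_{D(0,1)},\langle\cdot,\cdot\rangle_\caliD)$ and invoking uniqueness. Convergence follows from Assumption \ref{assumption}: iterating \eqref{inclusion} gives $|\tau(z)|\le\rho_x^{-1}<1$ for $\tau\in\caliT^*$, hence $|1-\tau(z)\cj{\zeta}|$ is bounded away from $0$ on compacts of $D(0,1)\times D(0,1)$ and $S$ converges normally there. Each summand, viewed in $\zeta$, has $L^2_{D(0,1)}$-norm $|\tau'(z)|/(1-|\tau(z)|^2)$ (it is a scaled Bergman kernel at $\tau(z)$), and the same bound on $|\tau(z)|$ together with Assumption \ref{assumption} shows this sum of norms is finite; consequently $\cj{S(z,\cdot)}\in L^2_{D(0,1)}$, so $f\mapsto\int_\caliD f\cdot S\,dA$ is continuous on $(L^2_{D(0,1)},\|\cdot\|_\caliD)$.

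The algebraic core is a one-step identity: for $w\in D(0,1)$, $j\in\N_s$, and $f\in L^2_{D(0,1)}$,
\[
\int_{D(c_j,r_j)}\frac{f(\zeta)}{(1-w\cj{\zeta})^2}\,dA(\zeta)=T_j'(w)\,f(T_j(w)).
\]
To prove it, I substitute $\zeta=c_j+r_j\mu$ to transform the integral into $r_j^2(1-w\cj{c}_j)^{-2}\int_{D(0,1)} f(c_j+r_j\mu)/(1-\lambda\cj{\mu})^2\,dA(\mu)$, where $\lambda=r_j w/(1-w\cj{c}_j)$. Because $T_j(D(0,1))\subset D(c_j,r_j)$, one has $|\lambda|<1$, so the reproducing property of the unit disk returns $f(c_j+r_j\lambda)$, which equals $f(T_j(w))$ by \eqref{formula-Tj-1}; the prefactor is exactly $T_j'(w)$.

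Equipped with this, for a polynomial $f$ and $z\in D(0,1)$, absolute convergence allows interchange of sum and integral in $\int_\caliD f(\zeta)S(z,\zeta)\,dA(\zeta)$, and I split each $\caliD$-integral as $\int_{D(0,1)}-\sum_j\int_{D(c_j,r_j)}$. The $D(0,1)$-piece contributes $\tau'(z)f(\tau(z))$ via the unit-disk reproducing property; each inner-disk piece contributes $-\tau'(z)T_j'(\tau(z))f(T_j\tau(z))=-(T_j\tau)'(z)f((T_j\tau)(z))$ via the identity above. By Lemma \ref{equality-strings}, $(j,\tau)\mapsto T_j\tau$ is a bijection $\N_s\times\caliT\to\caliT^*$, so the negative double sum reassembles as $\sum_{\eta\in\caliT^*}\eta'(z)f(\eta(z))$, which cancels the positive sum except for the $\tau=T_0$ term, yielding $f(z)$. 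Density of polynomials in $(L^2_{D(0,1)},\|\cdot\|_\caliD)$ combined with continuity of both sides extends the reproducing property to all of $L^2_{D(0,1)}$, and uniqueness of the reproducing kernel identifies $S$ with $\caliK_\caliD$.

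The main obstacle is the telescoping step combined with the two interchanges (sum with integral, and reindexing of the double sum over $(j,\tau)$). Both reduce to the absolute convergence granted by Assumption \ref{assumption} and to the uniqueness of the representation of elements of $\caliT^*$ as compositions, provided by Lemma \ref{equality-strings}.
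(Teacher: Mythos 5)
Your proposal is correct and follows essentially the same route as the paper: the heart of both arguments is the one-step identity $\int_{D(c_j,r_j)} f(\zeta)(1-w\cj{\zeta})^{-2}\,dA(\zeta)=T_j'(w)f(T_j(w))$ followed by splitting $\int_{\caliD}=\int_{D(0,1)}-\sum_j\int_{D(c_j,r_j)}$ and telescoping over the disjoint decomposition $\caliT^*=\bigcup_j\{T_j\tau:\tau\in\caliT\}$. The only real difference is that you prove the one-step identity by the elementary affine substitution $\zeta=c_j+r_j\mu$ together with the explicit formula \eqref{formula-Tj-1}, whereas the paper derives it from the conformal transformation rule \eqref{relating-kernels} for Bergman kernels; your density-of-polynomials and uniqueness-of-the-reproducing-kernel wrapper is also slightly more explicit than the paper's direct computation for general $f\in L^2_{D(0,1)}$, but both are sound.
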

\begin{proof}
The Bergman kernel for the unit disk is $(1- z \overline{\zeta})^{-2}$, so that 
for every $f\in  L^2_{D(0,1)}$ and $z\in D(0,1)$, 
\begin{align}\label{rep-u-disk}
f(z) ={} & \int_{D(0,1)}  \frac{f(\zeta)}{(1- z \overline{\zeta})^2}dA(\zeta).
\end{align}
Since each $\Phi_j$ is an automorphism of the unit disk, we get from  \eqref{relating-kernels} that for every $j\in\N_s$,
 \begin{equation}\label{magick}
\frac{1}{(1- z \overline{\zeta})^2} = \frac{ \Phi_j'(z) \overline{\Phi_j'(\zeta)} }{(1-  \Phi_j(z) \overline{\Phi_j(\zeta)} )^2}, \quad z, \zeta \in D(0,1),
\end{equation}
and for the same reason, since $\sigma_j^{-1}\Phi_j$ is a conformal map of $D(c_j,r_j)$ onto $D(0,1)$, it is the case that
\begin{align}\label{kernel-Dcr}
K_{D(c_j,r_j)}(z,\zeta)=\frac{ \sigma_j^2 \Phi_j ' (z) \overline{\Phi_j'(\zeta)} }{(\sigma_j^2-\Phi_j(z) \overline{\Phi_j(\zeta)})^2}.
\end{align}
 
We now show that for each $j\in\N_s$ and $f\in L^2_{D(0,1)}$, 
\begin{align}\label{integral-over-Dcr}
\int_{D(c_j,r_j)}  \frac{ f(\zeta)  }{(1- z\overline{\zeta})^2  }  dA(\zeta) = T'_j ( z) f (T_j (z)),\quad z\in D(0,1).
\end{align}

By relation \eqref{magick}, the fact that $\Phi_j(T_j(z))=\sigma_j^2 \Phi_j(z)$, and \eqref{kernel-Dcr},   we have  
\begin{align*}
\int_{D(c_j,r_j)}  \frac{ f(\zeta)   dA(\zeta)}{(1- z \overline{\zeta})^2  } ={} & \int_{D(c_j,r_j)} f(\zeta)\frac{\Phi_j' ( z) \overline{\Phi_j'(\zeta)}}{(1- \Phi_j(z) \overline{\Phi_j(\zeta)})^2} dA(\zeta)
\\
={} & T_j'(z)\int_{D(c_j,r_j)}  \frac{f(\zeta)\sigma_j^2  \Phi_j' (T_j ( z)) \overline{\Phi_j'(\zeta)}}{(\sigma_j^2- \Phi_j( T_j (z)) \overline{\Phi_j(\zeta)})^2}
 dA(\zeta)\\
 ={} &  T_j'(z)\int_{D(c_j,r_j)} f(\zeta) K_{D(c_j,r_j)}(T_j (z),\zeta)
 dA(\zeta)\\
 ={} & T'_j (z) f (T_j (z)),
\end{align*}
the latter equality being valid since $T_j(z) \in D(c_j,r_j)$ any time $z\in D(0,1)$.

Having established the above facts, it is now easy to prove \eqref{defkernel}. First, note that 
because of the unique representation that each $\tau\in\caliT$ has as a composition of $T_j$'s, we can write
\begin{align}\label{decomposition-family-T}
 \mathcal{T}^*=\bigcup_{j=1}^s\{T_j\tau:\tau\in \mathcal{T}\}=\bigcup_{j=1}^s\{\tau T_j:\tau\in \mathcal{T}\}
 \end{align}
(these unions being disjoint). Since $\tau(D(0,1))\subset D(0,1)$ for every $\tau\in D(0,1)$, we can use \eqref{rep-u-disk}, \eqref{integral-over-Dcr}, and the  decomposition \eqref{decomposition-family-T} to compute, for every $z\in \caliD$,
\begin{align*}
&\int_{\mathcal{D}}f(\zeta)  \left(\sum_{\tau \in \mathcal{T}}\frac{\tau'(z)}{(1- \tau(z) \overline{\zeta})^2  }\right) dA(\zeta) \\
 &=
 \sum_{\tau \in \mathcal{T}}\int_{D(0,1)}  \frac{ f(\zeta)\tau'(z)}{(1- \tau(z) \overline{\zeta})^2  }  dA(\zeta)  - \sum_{j \in \Lambda_s}\sum_{\tau \in \mathcal{T}}\int_{D(c_j,r_j)} \frac{ f(\zeta) \tau'(z)}{(1- \tau(z) \overline{\zeta})^2  }  dA(\zeta)
\\
&=
\sum_{\tau \in \mathcal{T}} \tau'(z) f(\tau(z))  - \sum_{j \in \Lambda_s}  \sum_{\tau \in \mathcal{T}}(T_j\circ \tau)'(z) f (T_j (\tau(z)))
\\
&=   \sum_{\tau \in \mathcal{T}} \tau'(z)  f( \tau(z))  - \sum_{\tau \in \mathcal{T}^*} \tau'(z)  f( \tau(z))  = f(z),
\end{align*}
the latter equality being true because the only function in $\mathcal{T} \setminus \mathcal{T}^*$ is the identity function. Thus, \eqref{defkernel} is established.  
\end{proof}

\section{Series representation for $P_n$}

Recall that we have defined the function $m:[0,\rho_a^{-1}]\to \R$ by \eqref{defmandMr}. This function is 
well-defined and continuous on $[0,\rho_a^{-1}]$, since the pole of each $T_j$ is contained in $\Delta(0,\rho_a^{-1})$. The composition of $m(r)$ with itself a number $v$ of times will be denoted by $m^v(r)$.

Some properties of the function $m(r)$ are summarized in Lemma  \ref{lemma1} below. Recall Definition \ref{defelln}, where the length $\ell(\tau)$ of a transformation $\tau\in\caliT^*$ was introduced.

\begin{lemma} \label{lemma1} (i) $m(r)$ is a strictly increasing function that maps $[\rho_a,\rho_a^{-1}]$ onto $[\rho_a,\rho_a^{-1}]$ and satisfies  
\begin{align}\label{ineq:mandM1}
\rho_a< m(r)<r, \qquad r\in(\rho_a,\rho_a^{-1}).
\end{align}

(ii) For every $r\in[0,\rho_a^{-1}]$ and $\tau\in \mathcal{T}^*$,
\begin{align}\label{ineq:mandaM3}
 \tau (\cj{D(0,r)})\subset \cj{D(0,m^\ell(r))},\qquad (where\ \ell=\ell(\tau)).
\end{align}

(iii)
\begin{align}\label{maxproperty}
m(\rho_x)/\rho_x=\rho_x^{-2}=\min_{r\in[\rho_a,\rho_a^{-1}]}m(r)/r.
\end{align}

\end{lemma}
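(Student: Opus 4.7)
The plan is to prove the three parts in order, relying throughout on the maximum modulus principle. Each $T_j$ is analytic on $\cj{D(0,\rho_a^{-1})}$: its only potential pole lies at $1/\cj{c}_j$, and the chain $|c_j|\leq |a_j|\leq\rho_a$ from \eqref{formulas-for-c-and-r-1}, together with the fact that equality in $|c_j|\leq |a_j|$ forces $c_j=0$, implies $|1/\cj{c}_j|=1/|c_j|>\rho_a^{-1}$ whenever $c_j\neq 0$.

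For part (i), the maximum modulus principle gives that for each fixed $j$ the function $M_j(r):=\max_{|z|=r}|T_j(z)|$ equals $\max_{|z|\leq r}|T_j(z)|$ and is strictly increasing and continuous on $[0,\rho_a^{-1}]$; taking the maximum over $j$ transfers both properties to $m$. The inequality $m(r)<r$ on $(\rho_a,\rho_a^{-1})$ is a direct reading of \eqref{between}. For the endpoint $m(\rho_a)=\rho_a$, I would combine \eqref{compact1} applied to an index $j^*$ with $|a_{j^*}|=\rho_a$ (which yields $M_{j^*}(\rho_a)=\rho_a$, attained at the fixed point $a_{j^*}$) with \eqref{between} and \eqref{compact1} applied to the remaining indices (giving $M_j(\rho_a)\leq\rho_a$). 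The value $m(\rho_a^{-1})=\rho_a^{-1}$ is obtained analogously, using the repelling fixed point $\cj{a}_{j^*}^{-1}$ of $T_{j^*}$ and passing to the boundary in \eqref{between} via continuity of $T_{j^*}$ on $\cj{D(0,\rho_a^{-1})}$. The ``onto'' conclusion then follows from continuity and the IVT.

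Part (ii) is a straightforward induction on $\ell=\ell(\tau)$, using that $m$ maps $[0,\rho_a^{-1}]$ into itself (immediate from (i), since $M_j(r)\leq M_j(\rho_a^{-1})\leq\rho_a^{-1}$). The base case $\ell=1$ is the inclusion $|T_j(z)|\leq m(r)$ on $|z|\leq r$, which is maximum modulus again. The induction step writes $\tau=T_{j_\ell}\circ(T_{j_{\ell-1}}\cdots T_{j_1})$, applies the inductive hypothesis to the right factor to place its image of $\cj{D(0,r)}$ inside $\cj{D(0,m^{\ell-1}(r))}$, and then applies the base case to $T_{j_\ell}$ at radius $m^{\ell-1}(r)$.

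Part (iii) is where the real work lies and is the main obstacle. The plan is to show that each $M_j(r)/r$ is bounded below by $1/|x_j|^2$, with equality only at $r=|x_j|$, and then take the maximum over $j$. To identify $M_j(r)$ explicitly, write $c_j=|c_j|e^{i\phi}$ and $z=re^{i(\phi+\theta)}$; the closed form \eqref{formula-Tj-1} expresses $|T_j(z)|^2$ as a rational function $N(u)/D(u)$ in $u=\cos\theta$, and a short calculation gives $\partial_u(N/D)$ the same sign as $1+(r_j^2-|c_j|^2)r^2$. Via \eqref{formulas-for-c-and-r} one verifies that this last expression stays positive for all $r\in[0,\rho_a^{-1}]$ (the algebra reduces to $|a_j|^4<1$), so the maximum on $|z|=r$ is attained at $u=1$, i.e.\ at $z=rc_j/|c_j|$, and
\[
M_j(r)=\frac{(r_j^2-|c_j|^2)r+|c_j|}{1-|c_j|r}.
\]
A direct algebraic factorization (the crucial routine computation) then yields the identity
\[
\frac{M_j(r)}{r}-\frac{1}{|x_j|^2}=\frac{|c_j|\bigl(1-r/|x_j|\bigr)^2}{r(1-|c_j|r)},
\]
which is nonnegative on $(0,1/|c_j|)$ and vanishes exactly at $r=|x_j|$. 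Applying this to any index $j^*$ with $|x_{j^*}|=\rho_x$ gives $m(r)/r\geq M_{j^*}(r)/r\geq\rho_x^{-2}$ for all $r\in[\rho_a,\rho_a^{-1}]$, with equality at $r=\rho_x$, establishing both the minimum claim and $m(\rho_x)/\rho_x=\rho_x^{-2}$. To confirm $m(\rho_x)=\rho_x^{-1}$ exactly, note that for every $j$ with $|x_j|>\rho_x$, strict monotonicity of $M_j$ combined with the easy identity $M_j(|x_j|)=1/|x_j|$ (which follows directly from \eqref{formula-Tj-1} by evaluating at $z=x_j$) gives $M_j(\rho_x)<1/|x_j|<\rho_x^{-1}$.
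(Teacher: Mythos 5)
Your proof is correct and follows essentially the same route as the paper's: everything hinges on the same explicit formula $\max_{|z|=r}|T_j(z)|=\frac{r_j^2r}{1-|c_j|r}+|c_j|$ (the paper's $m_j(r)$, your $M_j(r)$) together with its strict monotonicity, part (ii) is the identical iteration of the one-step inclusion, and part (iii) reaches the same minimization of $m_j(r)/r$ at $r=|x_j|$. The only execution-level differences are that you get surjectivity in (i) from the endpoint values $m(\rho_a)=\rho_a$, $m(\rho_a^{-1})=\rho_a^{-1}$ plus the intermediate value theorem (the paper instead constructs an explicit preimage using $T_j^{-1}$ and \eqref{between-inverse}), and in (iii) you replace the paper's calculus argument by the equivalent algebraic factorization $\frac{M_j(r)}{r}-\frac{1}{|x_j|^2}=\frac{|c_j|(1-r/|x_j|)^2}{r(1-|c_j|r)}$, both of which check out.
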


\begin{proof}
By the very definition of $m(r)$, $\cj{D(0,m(r))}$ is the smallest closed disk about the origin that contains each of the closed disks $T_j(\cj{D(0,r)})$, so that 
\begin{equation}\label{compact3}
T_j(\cj{D(0,r)})\subset \cj{D(0,m(r))}, \quad r\in[0,\rho_a^{-1}],\  j\in \N_s.
\end{equation}

If $r\in(\rho_a,\rho_a^{-1})$, then \eqref{between} holds true for every $j\in \N_s$, so that  $\cj{D(0,m(r))}\subset D(0,r)$, which together with  \eqref{between-1} yields \eqref{ineq:mandM1}. 

If $\cj{D(0,r')}\subset D(0,r)$, then for all $j\in \N_s$, the closed disk $T_j(\cj{D(0,r')})$ is contained in $D(0,m(r))$, and so $\cj{D(0,m(r'))}\subset D(0,m(r))$, proving that $m$ is strictly increasing.

We now show that $m$ maps $(\rho_a,\rho_a^{-1})$ onto $(\rho_a,\rho_a^{-1})$. Fix $r\in(\rho_a,\rho_a^{-1})$. From \eqref{between-inverse} and \eqref{between-inverse-1}, it follows that 
\begin{align*}
r<M(r):=\min_{j\in \N_s,\,z\in \cj{\Delta(0,r)}}
|T^{-1}_j(z)|<\rho_a^{-1},
\end{align*}
so that 
\begin{align*}
\cj{\Delta(0,M(r))}\supset T^{-1}_j(\cj{\Delta(0,r)}),\qquad j\in \N_s,
\end{align*}
and there is at least one of the maps $T^{-1}_j$, say $T_1^{-1}$, such that $ T^{-1}_1(\cj{\Delta(0,r)})$ touches the circle $\T(0,M(r))$. Hence $T_j(\cj{D(0,M(r))})\subset\cj{D(0,r)}$ for all $j\in \N_s$ and $T_1(\cj{D(0,M(r))})\cap \T(0,r)\not=\emptyset$, which in view of the definition of $m(r)$ means that  $m(M(r))=r$, and so the map $m$ is onto.

By continuity, $m:[\rho_a,\rho_a^{-1}]\to[\rho_a,\rho_a^{-1}]$ is surjective as well, with endpoint values  
\begin{align}\label{endpointvalues}
m(\rho_a)=\rho_a,\qquad m(\rho_a^{-1})=\rho^{-1}_a.
\end{align} 

The maximum value of $m$ in $[0,\rho_a^{-1}]$ is $\rho_a^{-1}$, so that the composition $m(m(r))$ is well-defined. Since  every $\tau\in\caliT^*$ is a composition of a number $\ell=\ell(\tau)$ of $T_j$ maps,  \eqref{ineq:mandaM3} follows by iterations of \eqref{compact3}.  

We now prove \eqref{maxproperty}. From \eqref{formula-Tj-1} we get
\begin{align}\label{defmj}
 m_j(r):=\max_{|z|=r}|T_j(z)|=\frac{r_j^2r}{1-|c_j|r}+|c_j|,\quad r\in [0,\rho_a^{-1}],\  j\in\N_s.
\end{align}
By taking derivatives, it follows  that $m_j$ is strictly increasing and that 
\[
 \min_{r\in [0,\rho_a^{-1}]}m_j(r)/r=m_j(|x_j|)/|x_j|=|x_j|^{-2},
\]
where $x_j$ is given by \eqref{def-of-xj-yj}. Let $j_0\in\N_s$ be such that $|x_{j_0}|=\rho_x$ (recall that by definition $\rho_x=\min_{j\in\N_s}|x_j|$). Then, for every $r\in[0,\rho_a^{-1}]$,
\begin{align}\label{maxrel}
\frac{m(r)}{r}=\max_{j\in\N_s}\frac{m_j(r)}{r}\geq \frac{m_{j_0}(r)}{r}\geq \frac{m_{j_0}(|x_{j_0}|)}{|x_{j_0}|}=\rho_x^{-2}.
\end{align}
But also, by the monotonicity of $m_j$, we see that for every $j\in\N_s$,
\[
 m_{j_0}(|x_{j_0}|)=\rho_x^{-1}\geq |x_j|^{-1}=m_j(|x_j|)\geq m_j(|x_{j_0}|).
 \]
Dividing this relation by $|x_{j_0}|$ we get $\rho_x^{-2}=m(|x_{j_0}|)/|x_{j_0}|$. This and \eqref{maxrel} yield \eqref{maxproperty}.

For later use, we observe that since $m_j$ is strictly increasing, the same argument can be used to prove that  $\rho_x^{-1}>m_j(\rho_x)$ for all $j$  such that $|x_j|>\rho_x$.
 Therefore, 
\begin{align}\label{vrhox}
v(\rho_x):= {} &\max\{m_j(\rho_x)/\rho_x: \ j\in \N_s, \ |x_j|>\rho_x\}<\rho_x^{-2}.
\end{align}

\end{proof}

For every  $r\in (0,\rho_a^{-1})$, define
 \[
\mu(r):=\sup_{|z|\leq r}\sum_{\tau \in \mathcal{T}}|\tau'(z)|.
\]

\begin{lemma} \label{estimate-sum-over-tau_v} 
(i) For every $r\in [\rho_a,\rho_a^{-1})$  and positive integers $n, v$, we have 
\begin{align}\label{estimate-derivative-f_n,1}
\sum_{\tau:\, \ell(\tau)\geq v}\left|\tau'(z)\tau(z)^{n-1}\right| 
\leq   \mu(r)(m^{v}(r))^{n-1},\quad z\in\cj{D(0,r)}. 
\end{align}
(ii) For every $r\in (\rho_a,\rho_a^{-1})$ and every integer $v\geq 1$, let $N=N(r,v)$ be such that 
\[
n\left(m^{v+1}(r)/m^{v}(r)\right)^{n-1}<1
\]
whenever  $n>N$. Then, for all $n>N$ and $z\in\cj{D(0,r)}$,
\begin{align}\label{inequality-for-modulus-f_n1}
\sum_{\tau:\, \ell(\tau)\geq v} \left|\tau(z)^{n}-\tau(0)^{n}\right|\leq r( 2s^v+\mu(r))(m^v(r))^{n-1}.
\end{align}
\end{lemma}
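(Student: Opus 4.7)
My plan for part (i) is straightforward: for $z\in\cj{D(0,r)}$ and $\tau\in\caliT^*$ with $\ell(\tau)\geq v$, Lemma~\ref{lemma1}(ii) gives $|\tau(z)|\leq m^{\ell(\tau)}(r)$; and since Lemma~\ref{lemma1}(i) tells us $m$ sends $[\rho_a,\rho_a^{-1}]$ into itself with $m(t)\leq t$, the iterates $m^k(r)$ are non-increasing in $k$, so I can conclude $m^{\ell(\tau)}(r)\leq m^v(r)$. I would then factor $(m^v(r))^{n-1}$ out of the sum, obtaining
\[
\sum_{\tau:\ell(\tau)\geq v}|\tau'(z)\tau(z)^{n-1}|\leq (m^v(r))^{n-1}\sum_{\tau:\ell(\tau)\geq v}|\tau'(z)|\leq \mu(r)(m^v(r))^{n-1}
\]
by the very definition of $\mu(r)$, which is \eqref{estimate-derivative-f_n,1}.

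For part (ii), I would split $\{\tau:\ell(\tau)\geq v\}$ into the finite ``head'' $\{\ell(\tau)=v\}$ and the ``tail'' $\{\ell(\tau)\geq v+1\}$, treating the two pieces separately because they demand fundamentally different estimates. The head contains at most $s^v$ maps (one per string $(j_v,\ldots,j_1)\in\N_s^v$), and for each such $\tau$ both $|\tau(z)|$ and $|\tau(0)|$ are bounded by $m^v(r)$, so the crude estimate
\[
|\tau(z)^n-\tau(0)^n|\leq |\tau(z)|^n+|\tau(0)|^n\leq 2(m^v(r))^n\leq 2r(m^v(r))^{n-1}
\]
(using $m^v(r)\leq r$, which follows from Lemma~\ref{lemma1}(i)) will bound the head contribution by $2rs^v(m^v(r))^{n-1}$. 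For the tail I plan to start from the line-integral identity
\[
\tau(z)^n-\tau(0)^n=n\int_0^1 \tau(sz)^{n-1}\tau'(sz)\,z\,ds,
\]
valid along the segment $\{sz:s\in[0,1]\}\subset\cj{D(0,r)}$. For $\ell(\tau)\geq v+1$, Lemma~\ref{lemma1}(ii) again yields $|\tau(sz)|\leq m^{v+1}(r)$, so each tail term is bounded by $nr(m^{v+1}(r))^{n-1}\int_0^1|\tau'(sz)|\,ds$. Summing over the tail, swapping sum and integral by Tonelli, and invoking $\sum_{\tau\in\caliT}|\tau'(w)|\leq\mu(r)$ for $|w|\leq r$, the tail bound becomes $nr\mu(r)(m^{v+1}(r))^{n-1}$. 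The defining condition on $N$, namely $n(m^{v+1}(r)/m^v(r))^{n-1}<1$ for $n>N$, rearranges to $n(m^{v+1}(r))^{n-1}<(m^v(r))^{n-1}$, and this converts the tail bound into $r\mu(r)(m^v(r))^{n-1}$. Adding head and tail then produces the claimed inequality \eqref{inequality-for-modulus-f_n1}.

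The main obstacle I anticipate is the unavoidable factor of $n$ that appears upon differentiating $\tau(w)^n$ inside the integral; the hypothesis defining $N$ is tailored precisely to absorb this factor using the strict contraction $m^{v+1}(r)<m^v(r)$, and this is exactly why the head, which as a \emph{finite} sum of $O(1)$ terms admits no such cancellation in $n$, must be controlled by the derivative-free triangle inequality instead.
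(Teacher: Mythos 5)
Your proposal is correct and follows essentially the same route as the paper: part (i) is the identical factoring argument via Lemma \ref{lemma1}(ii) and the definition of $\mu(r)$, and part (ii) uses the same head/tail split at $\ell(\tau)=v$ versus $\ell(\tau)\geq v+1$, with the triangle inequality on the finitely many head terms and the integral-of-the-derivative estimate (absorbing the factor $n$ via the defining condition on $N$) on the tail. The only difference is cosmetic: the paper phrases the tail estimate as the crude bound applied to $h_{n,v+1}$, which is exactly your tail computation.
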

\begin{proof}By  \eqref{ineq:mandM1}-\eqref{endpointvalues}, we have $\rho_a\leq m^\ell(r)\leq m^v(r)$ for all $r\in [\rho_a,\rho_a^{-1})$ and $\ell\geq v$. Then,  by \eqref{ineq:mandaM3}, $ \max_{|z|\leq r}|\tau(z)|\leq m^{v}(r)$ whenever $\ell(\tau)\geq v$, so that 
 \begin{align*}
\sum_{\tau:\, \ell(\tau)\geq v}|\tau'(z)||\tau(z)|^{n-1} 
\leq   \mu(r)(m^{v}(r))^{n-1},\quad z\in\cj{D(0,r)},
\end{align*}
proving \eqref{estimate-derivative-f_n,1}.

Let us now assume that $r$, $v$, and $n$ are as specified in part (ii). Let us write

\[
h_{n,v}(z):=\sum_{\tau:\, \ell(\tau)\geq v} \left|\tau(z)^{n}-\tau(0)^{n}\right|.
\]
Then, for $z\in \cj{D(0,r)}$,
 \begin{align*}
h_{n,v}(z)\leq n\int_0^z\sum_{\tau:\, \ell(\tau)\geq v}\left|\tau'(\zeta)\tau(\zeta)^{n-1}\right|| d\zeta|
\leq nr\mu(r)(m^{v}(r))^{n-1}.
\end{align*}
 To refine this estimate, we note that there are $s^v$ elements $\tau$ with  $\ell(\tau)= v$,  so that 
\begin{align*}
\sum_{\tau:\, \ell(\tau)= v} \left|\tau(z)^{n}-\tau(0)^{n}\right|
\leq {} &2s^v(m^v(r))^{n},
\end{align*}
and since $h_{n,v}(z)=\sum_{\tau:\, \ell(\tau)= v} |\tau(z)^{n}-\tau(0)^{n}| +h_{n,v+1}(z)$, we conclude that 
\begin{align}\label{ineq-first-case}
\left|h_{n,v}(z)\right|\leq {}&2s^v(m^v(r))^{n}+nr\mu(r)(m^{v+1}(r))^{n-1}.
\end{align} 
Since we have that $m^v(r)<r$, the inequality \eqref{inequality-for-modulus-f_n1}
follows at once from \eqref{ineq-first-case}.
\end{proof}

We now recursively define, for every integer $n\geq 0$, a sequence of functions $(f_{n,k})_{k=0}^\infty$ analytic in $D(0,\rho_a^{-1})$ as follows. 

For $k=0$, we set 
\[
f_{n,0}\equiv 1.
\] 

If for some $k\geq 0$, $f_{n,2k}$ has been defined as an analytic function in $D(0,\rho_a^{-1})$, then we set 
\begin{align}\label{f2k+1cont2}
f_{n,2k+1}(z):= &\sum_{\tau \in \caliT^*}\left(f_{n,2k} (\tau(z)) \tau(z)^{n+1}-f_{n,2k} (\tau(0)) \tau(0)^{n+1}\right),\quad |z|<\rho_a^{-1}.
\end{align}

Having defined $f_{n,2k+1}$, we then define $f_{n,2k+2}(z)$ for  $z\in D(0,\rho_a^{-1})$ by choosing some $r$ with $|z|<r<\rho_a^{-1}$ and letting 
\begin{align}\label{functionsf1}
f_{n,2k+2}(z) :=- \frac{1}{2 \pi i} \ointctrclockwise\limits_{\T(0,r)} f_{n,2k+1}(\zeta) \frac{\zeta^{-n-1}}{\zeta - z }  d \zeta,
\quad 
z\in D(0,r).
\end{align}
By Cauchy's theorem, it is clear that $f_{n,2k+2}$ is well-defined and analytic in $D(0,\rho_a^{-1})$

For future use, we  notice the identity (valid for $k\geq 0$)
\begin{align}\label{other-important-identity}
\begin{split}
f_{n,2k+2}(z)+\frac{z}{n+1}f'_{n,2k+2}(z)= -\frac{(n+1)^{-1}}{2 \pi i} \ointctrclockwise\limits_{\T(0,r)} \frac{ f'_{n,2k+1}(\zeta)\zeta^{-n}}{\zeta - z }  d \zeta.
\end{split}
\end{align}  
This follows from \eqref{functionsf1} by integration by parts and the fact that \[ \ointctrclockwise\limits_{\T(0,r)} \left(f_{n,2k+1}(\zeta)\zeta^{-n-1}\right)'d \zeta=0.\] To wit, 
\begin{align*}
\frac{z}{n+1}f'_{n,2k+2}(z)={} 
& -\frac{z}{(n+1)2 \pi i} \ointctrclockwise\limits_{\T(0,r)} \frac{ f_{n,2k+1}(\zeta)\zeta^{-n-1}}{(\zeta - z)^2 }  d \zeta\\
={} &-\frac{1}{(n+1)2 \pi i} \ointctrclockwise\limits_{\T(0,r)} \frac{\zeta \left(f_{n,2k+1}(\zeta)\zeta^{-n-1}\right)'}{\zeta - z}  d \zeta\\
={} &-f_{n,2k+2}(z)-\frac{1}{(n+1)2 \pi i} \ointctrclockwise\limits_{\T(0,r)} \frac{ f'_{n,2k+1}(\zeta)\zeta^{-n}}{\zeta - z }  d \zeta.
\end{align*} 

For every integer $n\geq 0$ and $r\in (\rho_a,\rho_a^{-1})$, let
\begin{align}\label{def-V}
V(r,n):=\frac{r(2s+\mu(r))\left(m(r)/r\right)^n}{r-m(r)}.
\end{align}

\begin{lemma}\label{lemma-estimates}For every $r\in(\rho_a,\rho_a^{-1})$ there exists an index $N_r$ such that  the inequalities
\begin{align}
| f'_{n,2k-1}(z)|\leq{} &(n+1)\mu(r)m(r)^{n}V(r,n)^{k-1},\quad |z|\leq r,\label{ineq:first-2}\\
|f_{n,2k-1}(z)| \leq{} &(r-m(r))r^n V(r,n)^{k},\quad |z|\leq r,\label{ineq:second}\\
| f_{n,2k}(z)|\leq{} &\frac{(r-m(r))V(r,n)^{k}}{||z|-r|},\quad |z|< r,\label{ineq:first-1}
\end{align}
and 
\begin{align}
\sum_{\tau:\, \ell(\tau)\geq v}\left|\left(f_{n,2k} (\tau(z)) \tau(z)^{n+1}\right)'\right|
\leq{} &(n+1) \mu(r)(m^v(r))^nV(r,n)^{k},\quad |z|\leq r,\label{induction-case-1}
\end{align}
hold true for all $n>N_r$, $k\geq 1$, and, in the case of \eqref{induction-case-1}, for all $v\geq 1$.

\end{lemma}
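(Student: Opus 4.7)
The plan is to establish (4.8)--(4.11) by a simultaneous induction on $k\geq 1$, exploiting the recursive definitions (4.5)--(4.6), the identity (4.7)---which, after multiplying through by $(n+1)z^n$, yields the more directly useful form
\[
(z^{n+1}f_{n,2k+2}(z))'=-\frac{z^n}{2\pi i}\ointctrclockwise_{\T(0,r)}\frac{f'_{n,2k+1}(\zeta)\zeta^{-n}}{\zeta-z}d\zeta---
\]
and the summation bounds of Lemma \ref{estimate-sum-over-tau_v}.

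The base case $k=1$ is the cleanest: since $f_{n,0}\equiv 1$, definition (4.5) gives $f_{n,1}(z)=\sum_{\tau\in\caliT^*}(\tau(z)^{n+1}-\tau(0)^{n+1})$. Termwise differentiation plus Lemma \ref{estimate-sum-over-tau_v}(i) (with exponent shifted from $n-1$ to $n$) yields (4.8), and Lemma \ref{estimate-sum-over-tau_v}(ii) with $v=1$ (valid once $n+1>N(r,1)$) yields (4.9). The estimate (4.10) for $f_{n,2}$ is then a standard Cauchy bound on the contour integral defining it, using $|\zeta-z|\geq r-|z|$ on $\T(0,r)$ and the (4.9) bound for $|f_{n,1}|$. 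For (4.11) with $k=1$, substitute $z=\tau(z_0)$ (where $\ell(\tau)\geq v$) into the rewritten identity (4.7) and multiply by $\tau'(z_0)$; the Cauchy integral is bounded via (4.8) together with $|\zeta-\tau(z_0)|\geq r-m^v(r)$, and summing over $\tau$ extracts a factor $\sum_{\tau:\ell(\tau)\geq v}|\tau'(z_0)||\tau(z_0)|^n\leq \mu(r)(m^v(r))^n$ via Lemma \ref{estimate-sum-over-tau_v}(i).

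For the inductive step, assume (4.8)--(4.11) hold for $k$. Three of the four propagate routinely: (4.8) for $k+1$ is obtained by termwise differentiation of (4.5), which is exactly the sum controlled by (4.11) for $k$ with $v=1$; (4.10) for $f_{n,2k+2}$ is the Cauchy estimate on (4.6) using the (4.9) bound for $f_{n,2k+1}$; and (4.11) for $f_{n,2k+2}$ is proved exactly as in the base case with (4.8) for $k+1$ in place of $k=1$.

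The main obstacle, and the step that dictates the whole setup, is (4.9) for $k+1$, because the naive bound $|f_{n,2k+1}(z)|\leq r\,\sup_{|\zeta|\leq r}|f'_{n,2k+1}(\zeta)|$ would introduce an unwanted factor of $n+1$ from (4.8). The plan is to avoid this by splitting
\begin{align*}
f_{n,2k+1}(z) ={} & \sum_\tau f_{n,2k}(\tau(z))\bigl(\tau(z)^{n+1}-\tau(0)^{n+1}\bigr)\\
&{}+\sum_\tau \bigl(f_{n,2k}(\tau(z))-f_{n,2k}(\tau(0))\bigr)\tau(0)^{n+1}=:A(z)+B(z).
\end{align*}
For $A$, the bound (4.10) for $k$ at $|w|\leq m(r)$ yields $|f_{n,2k}(\tau(z))|\leq V(r,n)^k$, and Lemma \ref{estimate-sum-over-tau_v}(ii) with $v=1$ bounds $\sum_\tau|\tau(z)^{n+1}-\tau(0)^{n+1}|$ by $(r-m(r))r^n V(r,n)$, giving $|A(z)|\leq (r-m(r))r^n V(r,n)^{k+1}$, which is exactly the target. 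For $B$, the difference $f_{n,2k}(\tau(z))-f_{n,2k}(\tau(0))$ is rewritten via the integral formula (4.6) as a single Cauchy integral with denominator $(\zeta-\tau(z))(\zeta-\tau(0))$ bounded below by $(r-m(r))^2$; inserting the inductive bound (4.9) for $f_{n,2k-1}$ inside, and summing via $\sum_\tau|\tau(z)-\tau(0)||\tau(0)|^{n+1}\leq r\mu(r)m(r)^{n+1}$, produces a bound for $|B|$ of the same geometric order $(r-m(r))r^n V(r,n)^{k+1}$ but with a strictly smaller prefactor proportional to $\mu(r)m(r)/((2s+\mu(r))(r-m(r)))$, which is absorbed after possibly enlarging $N_r$ so that the remaining slack in $V(r,n)$ dominates. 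Careful bookkeeping of these constants is the delicate part of the proof.
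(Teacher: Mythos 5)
Your overall architecture --- a simultaneous induction on $k$ in which \eqref{ineq:first-2}--\eqref{ineq:second} seed the cycle, \eqref{ineq:first-1} is a Cauchy estimate on \eqref{functionsf1}, \eqref{induction-case-1} follows from evaluating the rewritten identity \eqref{other-important-identity} at $\tau(z)$ and summing $|\tau'|$, and \eqref{ineq:first-2} at level $k+1$ is \eqref{induction-case-1} with $v=1$ --- coincides with the paper's, and those steps are sound. You have also correctly identified the crux: propagating \eqref{ineq:second} to level $k+1$ without importing the factor $n+1$ from \eqref{ineq:first-2}. But your $A+B$ decomposition does not close that step. Your bound for $A$ is, by your own account, \emph{exactly} the target $(r-m(r))r^nV(r,n)^{k+1}$, so there is no slack left for $B$; and $B$ cannot be absorbed by enlarging $N_r$, because the estimate
\[
|B(z)|\le \frac{V(r,n)^k}{r-m(r)}\sum_{\tau\in\caliT^*}|\tau(z)-\tau(0)|\,|\tau(0)|^{n+1}\le \frac{r\,\mu(r)\,m(r)^{n+1}V(r,n)^k}{r-m(r)}
\]
is of the \emph{same} exponential order $m(r)^nV(r,n)^k$ as the target: the ratio is the $n$-independent constant $\mu(r)m(r)/\bigl((2s+\mu(r))(r-m(r))\bigr)$, which is not small (it blows up as $m(r)\to r$, i.e.\ as $r$ approaches $\rho_a$ or $\rho_a^{-1}$), and increasing $N_r$ does nothing to it. Even after sharpening the $A$ bound to expose its genuine slack, the resulting requirement $\mu(r)m(r)/(r-m(r))\le 2s(r-m(r))+r\mu(r)$ fails whenever $m(r)$ is close to $r$. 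So the decomposition by ``which factor varies'' is the wrong splitting.

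The paper splits instead by the length $\ell(\tau)$. In \eqref{f2k+1cont2}, the $\ell(\tau)=1$ part consists of only $2s$ terms $f_{n,2k}(T_j(z))T_j(z)^{n+1}$ and $f_{n,2k}(T_j(0))T_j(0)^{n+1}$, each bounded \emph{directly} by $m(r)^{n+1}V(r,n)^{k}$ via \eqref{ineq:first-1} at the point $T_j(z)\in\cj{D(0,m(r))}$ --- no differentiation, hence no factor $n+1$. The $\ell(\tau)\ge2$ tail is written as $\int_0^z\sum_{\ell(\tau)\ge2}\bigl(f_{n,2k}(\tau(\zeta))\tau(\zeta)^{n+1}\bigr)'d\zeta$ and bounded by \eqref{induction-case-1} with $v=2$, which does carry the factor $n+1$ but also the factor $(m^2(r))^n$; the condition $(n+1)(m^2(r)/m(r))^n<1$ built into the choice of $N_r$ then neutralizes the $n+1$. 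Summing the two contributions gives $m(r)^nV(r,n)^{k}\bigl(2sm(r)+r\mu(r)\bigr)\le(r-m(r))r^nV(r,n)^{k+1}$. This length splitting --- exchanging a factor of $n+1$ for the geometric gain $(m^2(r)/m(r))^n$ on the infinite part of the sum, while the finite part needs no derivative at all --- is the idea your proposal is missing.
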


\begin{proof}

We will proceed by induction on $k$, that is, checking first that \eqref{ineq:first-2} and \eqref{ineq:second} are true for $k=1$, and that if  \eqref{ineq:first-2}-\eqref{ineq:second}  are true for some $k=k_0\geq 1$, then so are \eqref{ineq:first-1} and \eqref{induction-case-1}. Then, we show that if  \eqref{ineq:first-1} and \eqref{induction-case-1} are true for $k=k_0$, then \eqref{ineq:first-2} and \eqref{ineq:second} are true for $k=k_0+1$, completing the induction cycle. 

Let $N_r$ be so large that 
\begin{align}\label{index-assumption}
 (n+1)(m^2(r)/m(r))^n<1
\end{align}
once $n>N_r$. If $k=1$,  the inequalities \eqref{ineq:first-2} and  \eqref{ineq:second} follow from \eqref{estimate-derivative-f_n,1} and \eqref{ineq-first-case} (case $v=1$). 

Let us suppose now that \eqref{ineq:first-2} and \eqref{ineq:second} hold true for some $k=k_0\geq 1$. From \eqref{functionsf1} we get 
\begin{align*}
|f_{n,2k_0}(z)|\leq{} &  \frac{r^{-n}}{ ||z|-r| }\max_{|z|=r}|f_{n,2k_0-1}(z)|.
\end{align*}
This and the  inequality \eqref{ineq:second}, which is assumed to be valid for $k=k_0$, prove that  \eqref{ineq:first-1} holds true for $k=k_0$.

 From \eqref{other-important-identity} we get that for $|z|\leq r$,
 \begin{align*}
\left|\left(f_{n,2k_0} (\tau(z)) \tau(z)^{n+1}\right)'\right|
={}& \left|\frac{\tau'(z) \tau(z)^{n}}{2 \pi } \ointctrclockwise\limits_{\T(0,r)} \frac{ f'_{n,2k_0-1}(\zeta)\zeta^{-n}}{\zeta - \tau(z) }  d \zeta\right|\\
\leq {} &(n+1)|\tau'(z)|(m^v(r))^n\frac{r\mu(r)\left(\frac{m(r)}{r}\right)^{n}V(n,r)^{k_0-1}}{r-m^v(r)}\\
\leq{} &(n+1)|\tau'(z)| (m^v(r))^nV(n,r)^{k_0},
\end{align*}
whence the validity of \eqref{induction-case-1} for $k=k_0$ follows at once, and moreover, by the definition \eqref{f2k+1cont2}, if we set $v=1$ in \eqref{induction-case-1} we get that \eqref{ineq:first-2} also holds true for $k=k_0+1$. 

Finally, to show that \eqref{ineq:first-1} holds true for $k=k_0+1$, we combine \eqref{f2k+1cont2}, the fact that \eqref{ineq:first-1} is true for $k=k_0$, the inequality \eqref{induction-case-1} with $v=2$, and \eqref{index-assumption} to get  
\begin{align*}
|f_{n,2k_0+1}(z)|\leq {} &\left|\sum_{k=1}^s(f_{n,2k_0}(T_j(z))T_j(z)^{n+1}-f_{n,2k_0}(T_j(0))T_j(0)^{n+1}\right|\\
& +\left|\int_0^z\sum_{\tau:\, \ell(\tau)\geq 2}\left(f_{n,2k_0} (\tau(\zeta)) \tau(\zeta)^{n+1}\right)'d\zeta\right|\\
\leq {} & 2s(m(r))^{n+1}V(r,n)^{k_0}+r(n+1) \mu(r)(m^2(r))^nV(n,r)^{k_0}\\
\leq {} &(m(r))^nV(r,n)^{k_0}\left(2sm(r)+r\mu(r)(n+1)\left(\frac{m^2(r)}{m(r)}\right)^n\right)\\
\leq {} &(m(r))^nV(r,n)^{k_0}r\left(2s+\mu(r))\right)= (r-m(r))r^nV(r,n)^{k_0+1}.
\end{align*}
\end{proof}

In the next theorem and in any subsequent discussion, $P_n$ and  $\kappa_n$ are the orthogonal polynomial and leading coefficient corresponding to a CMCD  $\caliD$.

\begin{theorem}  \label{thm-poly}  For every $\rho\in (0,\rho_a^{-1})$, there exists an integer $N_\rho$ such that for all $n>N_\rho$, we have 
\begin{align*}
 P_n (z)= \frac{1}{n+1}\caliP_n'(z),
\end{align*} 
where
\begin{align}\label{defcaliP}
 \caliP_n (z):= 
 z^{n+1} \sum _{k=0} ^\infty f_{n, 2k} (z)+\sum _{k=0} ^\infty f_{n, 2k+1} (z), \quad z\in D(0,\rho).
\end{align} 
Also,
\[
(n+1) \kappa_n ^{-2} = \sum_{k=0}^\infty f_{n, 2k} (0).
\]
\end{theorem}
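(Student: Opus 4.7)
My plan is to show in three main steps that $\mathcal{P}_n$ is well-defined as a polynomial of degree $n+1$, that $\mathcal{P}_n'=(n+1)P_n$, and that the norm identity follows as a corollary. The core idea is that the sum $g_n:=\sum_{k\geq 0}f_{n,2k}$ satisfies an operator fixed-point equation $(I-ML)g_n=1$ which the ``moment generating function'' $\tilde g_n(u):=(n+1)\sum_{k\geq n}\beta_{n,k}u^{k-n}$ (with $\beta_{n,k}:=\int_{\mathcal{D}} P_n\cj\zeta^k dA$) also satisfies, and by uniqueness these two functions coincide.

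For convergence and polynomial structure, I would invoke the estimates of Lemma \ref{lemma-estimates}: $|f_{n,2k-1}(z)|\leq(r-m(r))r^n V(r,n)^k$ and $|f_{n,2k}(z)|\leq(r-m(r))V(r,n)^k/||z|-r|$, together with $V(r,n)\to 0$ as $n\to\infty$ for fixed $r\in(\rho_a,\rho_a^{-1})$, to conclude that $\sum_k f_{n,2k}$ and $\sum_k f_{n,2k+1}$ converge uniformly on compacta of $D(0,\rho)$ once $n$ exceeds some $N_\rho$. A short residue computation from \eqref{functionsf1} shows that if $f_{n,2k-1}(\zeta)=\sum_j a_j\zeta^j$, then $f_{n,2k}(z)=-\sum_{j\geq n+1}a_jz^{j-n-1}$, hence $z^{n+1}f_{n,2k}(z)+f_{n,2k-1}(z)=\sum_{j=0}^n a_j z^j$ is a polynomial of degree at most $n$. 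Each partial sum of $\mathcal{P}_n$ is therefore a polynomial of degree $n+1$ with leading coefficient $1$ and zero constant term (the latter because $f_{n,2k+1}(0)=0$ by the definition of $L$ in \eqref{f2k+1cont2}), and these properties pass to the limit.

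The identification $\mathcal{P}_n'=(n+1)P_n$ is the heart of the proof. Using $T_0(0)=0$, I rewrite $\mathcal{P}_n(z)=\sum_{\tau\in\mathcal{T}}\bigl[g_n(\tau(z))\tau(z)^{n+1}-g_n(\tau(0))\tau(0)^{n+1}\bigr]$; the recursions $f_{n,2k+1}=Lf_{n,2k}$ and $f_{n,2k+2}=MLf_{n,2k}$ (with $M$ the Cauchy operator in \eqref{functionsf1}) give $(I-ML)g_n=1$ and $g_n=\sum_{k\geq 0}(ML)^k\cdot 1$. Substituting the kernel representation \eqref{defkernel} into $P_n(z)=\int_{\mathcal{D}} P_n(\zeta)\mathcal{K}_{\mathcal{D}}(z,\zeta)\,dA(\zeta)$, expanding $(1-\tau(z)\cj\zeta)^{-2}=\sum_{j\geq 0}(j+1)\tau(z)^j\cj\zeta^j$, and using $\int_{\mathcal{D}} P_n\cj\zeta^j\,dA=0$ for $j<n$ gives $P_n(z)=\sum_\tau\tau'(z)\sum_{j\geq n}(j+1)\tau(z)^j\beta_{n,j}$, so termwise antidifferentiation in $z$ with normalization at $0$ produces $(n+1)\int_0^zP_n(w)\,dw=\sum_{\tau\in\mathcal{T}}\bigl[\tilde g_n(\tau(z))\tau(z)^{n+1}-\tilde g_n(\tau(0))\tau(0)^{n+1}\bigr]$.

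To verify $(I-ML)\tilde g_n=1$, note that $z^{n+1}\tilde g_n(z)=(n+1)\sum_{k\geq n}\beta_{n,k}z^{k+1}$ has Taylor support in degrees $\geq n+1$, while $(n+1)\int_0^zP_n(w)\,dw$ is a polynomial of degree $n+1$ with leading coefficient $1$; the residue formula $Mh(z)=-\sum_{j\geq n+1}a_j z^{j-n-1}$ (for $h(\zeta)=\sum_j a_j\zeta^j$) then gives $M(z^{n+1}\tilde g_n)=-\tilde g_n$ and $M\bigl((n+1)\int_0^zP_n(w)\,dw\bigr)=-1$. Since $L\tilde g_n(z)=(n+1)\int_0^zP_n(w)\,dw-z^{n+1}\tilde g_n(z)$, subtraction yields $ML\tilde g_n=\tilde g_n-1$, i.e.\ $(I-ML)\tilde g_n=1$. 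Iterating $\tilde g_n=1+ML\tilde g_n$ gives $\tilde g_n=\sum_{k=0}^K(ML)^k\cdot 1+(ML)^{K+1}\tilde g_n$, and the tail tends to zero on compacta by the same bounds of Lemma \ref{lemma-estimates}, so $\tilde g_n=g_n$ and $\mathcal{P}_n(z)=(n+1)\int_0^zP_n(w)\,dw$. The norm identity then follows from $\sum_{k\geq 0}f_{n,2k}(0)=g_n(0)=\tilde g_n(0)=(n+1)\beta_{n,n}=(n+1)\kappa_n^{-2}$, where $\beta_{n,n}=\int_{\mathcal{D}}|P_n|^2\,dA$ because $z^n-P_n$ has degree less than $n$ and is orthogonal to $P_n$. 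The main technical obstacle I foresee is the iteration argument closing on a common function space: one must verify that $\tilde g_n$, a priori only analytic on its Taylor-convergence disk, extends to a disk $D(0,r)$ with $\rho_a<r$ on which $ML$ is contractive, which in turn requires growth bounds on the moments $\beta_{n,k}$.
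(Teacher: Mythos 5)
Your argument is correct in outline but follows a genuinely different route from the paper's. Where you set up the fixed-point equation $(I-ML)g_n=1$ and identify $g_n=\sum_k f_{n,2k}$ with the moment generating function $\tilde g_n(u)=(n+1)\sum_{k\geq n}\beta_{n,k}u^{k-n}$ by uniqueness, the paper never looks at the moments of $P_n$ at all: it introduces the meromorphic kernel $\mathcal{M}_{\mathcal{D}}(z,\zeta)=\sum_\tau\frac{\tau(z)-\tau(0)}{(\zeta-\tau(0))(\zeta-\tau(z))}$, whose $z$-derivative is $\mathcal{K}_{\mathcal{D}}(z,1/\cj{\zeta})/\zeta^2$, proves by residues that $\caliP_n(z)=\frac{1}{2\pi i}\oint_{\T(0,r)}f_{n,e}(\zeta)\mathcal{M}_{\mathcal{D}}(z,\zeta)\zeta^{n+1}d\zeta$, and then computes $\int_\caliD\caliP_n'(z)\cj{z}^m\,dA$ directly via Fubini and the reproducing property, so that $(n+1)^{-1}\caliP_n'$ is identified with $P_n$ through the defining orthogonality relations. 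Your approach buys the cleaner identity $\caliP_n(z)=(n+1)\int_0^zP_n(w)\,dw$ and the interpretation of $f_{n,e}$ as encoding all moments $\beta_{n,j}$, $j\geq n$ (the norm identity then being the constant term); the paper's buys a shorter path that never needs uniqueness of a fixed point. Your treatment of the polynomial structure via the coefficient-shift form of $M$ (namely $Mh(z)=-\sum_{j\geq n+1}a_jz^{j-n-1}$) is also more elementary than the paper's, which establishes the analytic continuation of $\caliP_n$ across $\T(0,\rho')$, checks $\caliP_n(z)/z^{n+1}\to1$ at infinity, and invokes Liouville.

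The one step you flag but do not close is real, though easily closed. First, $|\beta_{n,k}|\leq\|P_n\|_{\caliD}\,\|\zeta^k\|_{\caliD}\leq\|P_n\|_{\caliD}(k+1)^{-1/2}$, so $\tilde g_n$ is analytic on all of $D(0,1)$, and since $\rho_a<1$ you may fix $r\in(\rho_a,1)$ on which $\tilde g_n$ and $\tilde g_n'$ are bounded; this also legitimizes applying the residue formula for $M$ to $z^{n+1}\tilde g_n$ over $\T(0,r)$. Second, the bounds of Lemma \ref{lemma-estimates} do not apply verbatim to $(ML)^{K}\tilde g_n$, since their induction is seeded at $f_{n,0}\equiv1$; you must rerun the same induction with the difference $d=g_n-\tilde g_n$ as seed, using $\sum_\tau|\tau'|\leq\mu(r)$ and $\tau(\cj{D(0,r)})\subset\cj{D(0,m(r))}$ to bound $Ld$ and $(Ld)'$ on $\cj{D(0,r)}$ in terms of $\|d\|$ and $\|d'\|$ there. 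The inductive step then produces the same geometric factor $V(r,n)^K$, which tends to $0$ for each fixed $n>N_r$, giving $d\equiv0$. With these two additions your proof is complete.
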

\begin{proof}
We can obviously assume that $\rho_a<\rho<\rho_a^{-1}$. According to Lemma \ref{lemma-estimates}, there is an index $N_\rho$ such that for all $n>N_\rho$, we have 
\begin{align*}
|f_{n,2k+1}(z)|\leq
       (\rho-m(\rho))\rho^n V(\rho,n)^{k+1}, \quad  |z|\leq \rho,\quad k\geq 0,
\end{align*}
\begin{align*}
|f_{n,2k}(z)| \leq \frac{(\rho-m(\rho)) V(\rho,n)^{k}}{||z|-\rho  |} ,\quad 
|z|<\rho,\quad k\geq 1.
\end{align*}

Since $m(\rho)<\rho$, we can find an integer $\tilde{N}_\rho>N_\rho$ such that for all $n>\tilde{N}_\rho$ (see \eqref{def-V})
$
V(\rho,n)<1$, 
which implies that, for all $n>\tilde{N}_\rho$, the two series in the right-hand side of \eqref{defcaliP} converge absolutely and normally on $D(0,\rho)$.  

Let us now fix numbers $\rho, \rho', \rho''$, with $\rho_a<\rho<\rho'<\rho''<\rho_a^{-1}$. We just showed that the analytic functions (subscripts $e$ and $o$ for even, odd)
\[
 f_{n,e}(z):=\sum _{k=0} ^\infty f_{n, 2k} (z),\qquad f_{n,o}(z):=\sum _{k=0} ^\infty f_{n, 2k+1} (z) 
\]
are well-defined on $|z|<\rho''$ for all $n$ larger than some number $\tilde{N}_{\rho''}$, and by definition
 \[
\caliP_n (z)= 
z^{n+1}f_{n,e}(z)+f_{n, o} (z), \quad |z|<\rho'', \quad n>N_{\rho''}.
 \]

By \eqref{functionsf1} and Cauchy's integral theorem, we have that for $\rho'<|z|<\rho''$,
\begin{align*}
f_{n,e}(z)-1 ={} & - \frac{1}{2 \pi i} \ointctrclockwise\limits_{\T(0,\rho'')} f_{n,o}(\zeta) \frac{\zeta^{-n-1}}{\zeta - z }  d \zeta\\
={} &-  f_{n,o}(z)-\frac{1}{2 \pi i} \ointctrclockwise\limits_{\T(0,\rho')} f_{n,o}(\zeta) \frac{\zeta^{-n-1}}{\zeta - z }  d \zeta.
\end{align*}
Hence, 
\begin{align}\label{P_n-for-z-large}
\caliP_n (z)= 
 z^{n+1}-\frac{z^{n+1}}{2 \pi i} \ointctrclockwise\limits_{\T(0,\rho')}f_{n,o}(\zeta) \frac{\zeta^{-n-1}}{\zeta - z }  d \zeta, \quad \rho'<|z|<\rho''.
 \end{align}
It follows that $\caliP_n$ has an analytic continuation to the whole complex plane, and that 
\begin{align*}
\lim_{z\to\infty}\frac{\mathcal{P}_n (z)}{z^{n+1}} &=1.
\end{align*}
By Liouville's theorem, $\caliP_n$ is a monic polynomial of degree $n+1$, so that  $(n+1)^{-1}\caliP_n'(z)$ is a monic polynomial of degree $n$.

To finish the proof it  suffices to show that 
\begin{align*}
\int _{\caliD} \mathcal{P}'_{n} (z)  \overline{z}^m  d A(z)&=
\begin{cases}
    0,& 0 \leq m< n,\\
 f_{n, e} (0), & m=n,
\end{cases}
\end{align*}
since $\kappa_n^{-2}=\int _{\caliD} P_{n}(z)  \overline{z}^ndA$, by \eqref{extremality}.

To accomplish this, we consider the meromorphic kernel 
\begin{align}\label{defkernelM}
\mathcal{M}_{\mathcal{D}}(z,\zeta) 
&:=  \sum_{\tau \in \mathcal{T}}
\frac{\tau(z)-\tau(0)}{(\zeta - \tau(0))(\zeta - \tau(z))}
,
\end{align}
which is related to the reproducing kernel $\mathcal{K}_{\mathcal{D}}(z,\zeta)$ via the equality 
\begin{align}\label{twokernelsrelation}
\frac{\partial}{\partial z} \mathcal{M}_{\mathcal{D}} (z, \zeta) =
\frac{ \mathcal{K}_{\mathcal{D}}(z,1/\cj{\zeta})}{ \zeta^2 }.
\end{align}
We then fix $r\in (1,\rho_a^{-1})$ and note that 
\begin{align}\label{functionsfrhorel-3} 
\mathcal{P}_{n} (z)=\frac{1}{2 \pi i } \ointctrclockwise\limits_{\T(0,r)}f_{n,e} (\zeta)  \mathcal{M}_{\mathcal{D}} (z, \zeta)  \zeta^{n+1}  d \zeta,\quad |z|<r.
\end{align}
Indeed, being  $\mathcal{M}_{\mathcal{D}}$ given by \eqref{defkernelM}, the right-hand side of \eqref{functionsfrhorel-3} is well-defined at those points $z \in D(0,\rho_a^{-1})\setminus \T(0,r)$ for which
$\tau(z)\not\in \T(0,r)$ for every $\tau\in \caliT^*$. But we know from \eqref{ineq:mandM1}-\eqref{ineq:mandaM3} in Lemma \ref{lemma1} that for all $\tau\in \caliT^*$, $\tau(D(0,r))\subset D(0,r)$, so that the integral in \eqref{functionsfrhorel-3} is well-defined and analytic for  $z\in D(0,r)$. Moreover, using \eqref{defkernelM}, the residue theorem, and the definition \eqref{f2k+1cont2}, we get that for $z\in D(0,r)$, 
\begin{align*}
 &\frac{1}{2 \pi i } \ointctrclockwise\limits_{\T(0,r)}f_{n,e} (\zeta)  \mathcal{M}_{\mathcal{D}} (z, \zeta)  \zeta^{n+1}  d \zeta\\
 &={}
 \sum _{k=0} ^\infty \frac{1}{2 \pi i } \ointctrclockwise\limits_{\T(0,r)}f_{n,2k} (\zeta)  \mathcal{M}_{\mathcal{D}} (z, \zeta)  \zeta^{n+1}  d \zeta\\
 &={}\sum _{k=0} ^\infty\frac{z}{2 \pi i } \ointctrclockwise\limits_{\T(0,r)}\frac{f_{n, 2k} (\zeta)\zeta^{n} }
{\zeta - z} d \zeta\\
&\qquad + \sum _{k=0} ^\infty\sum_{\tau \in \caliT^*}\frac{\tau(z)-\tau(0)}{2 \pi i }\ointctrclockwise\limits_{\T(0,r)} \frac{f_{n, 2k} (\zeta)\zeta^{n+1}d \zeta}{(\zeta - \tau(0))(\zeta - \tau(z))} \\
&={} z^{n+1}\sum _{k=0} ^\infty f_{n, 2k} (z)+\sum _{k=0}^\infty f_{n,2k+1}(z),
\end{align*}
which is precisely the value of $\mathcal{P}_{n} (z)$.

By \eqref{functionsfrhorel-3} and  \eqref{twokernelsrelation},
\[
\mathcal{P}_n '(z)= 
\frac{1}{2 \pi i}
\ointctrclockwise\limits_{\T(0,r)}
f_{n,e}(\zeta)\mathcal{K}_{\mathcal{D}}(z,1/\cj{\zeta})\zeta^{n-1} d \zeta,
\quad
|z|<r.
\]
By Fubini's theorem and the reproducing property of the kernel $\mathcal{K}_\caliD$, we then have 
\begin{align*}
\int _{\caliD} \mathcal{P}'_{n} (z)  \overline{z}^m  d A(z)={}&\frac{1}{2\pi i}\int \limits_{\T(0,r)}f_{n,e} (\zeta) \zeta^{n-1} 
\left\{ \overline{
\int_{\caliD}   \mathcal{K}_{\caliD}(1/\cj{\zeta},z)  z^m  dA(z) }
\right\}d \zeta\\
={} &\frac{1}{2\pi i}\int \limits_{\T(0,r)}f_{n,e} (\zeta) \zeta^{n-m-1} 
d \zeta=\begin{cases}
    0,& 0 \leq m< n,\\
f_{n,e} (0), & m=n.
\end{cases}
\end{align*}
\end{proof}
\section{Proofs of the asymptotic results}
Before commencing the proofs of the asymptotic results, we gather in three auxiliary  propositions the finer aspects of the asymptotic analysis. Because the proofs of these propositions are technically involved, we postpone them 
to the last section of the paper.
\subsection{Auxiliary propositions}
For every $j\in \N_s$, let $x_j$ and $y_j$ be defined as in \eqref{def-of-xj-yj}

 \begin{proposition} \label{prop-integral-error-1} For every $j\in \N_s$, 
the asymptotic expansion
\begin{align}\label{asymptotic-formula-1-in-z}
\begin{split}
&\frac{1}{2 \pi i}\!\!\! \ointctrclockwise\limits_{\T(0,|x_j|)}
\frac{T'_j(\zeta)\left(T_j(\zeta)/\zeta\right)^{n}}{\zeta-z}  d\zeta\\
&\sim{} -\frac{|x_j|^{-2n-2}}{2\pi}\sum_{k=0}^\infty R_{j,k}(z)\frac{\Gamma(k+\frac{1}{2})\Gamma(n-k+\frac{3}{2})}{\Gamma(n+2)}
\end{split}
\end{align}
holds true uniformly on closed subsets of $\cj{\C}\setminus\T(0,|x_j|)$  as $n\to\infty$, where the coefficients $R_{j,k}$ are defined via the Maclaurin series \eqref{coefficients-Rjk}.  

Also, when $r_j=|c_j|$, we have 
\begin{align}\label{particular-case}
\frac{1}{2 \pi i} \ointctrclockwise\limits_{\T(0,|x_j|)} 
\frac{T'_j(\zeta)\left(T_j(\zeta)/\zeta\right)^{n}}{\zeta}  d\zeta
={} & \frac{|x_j|^{-2n-2}}{2\sqrt{\pi} } \frac{\Gamma(n+3/2)}{\Gamma(n+2)}.
\end{align}
 \end{proposition}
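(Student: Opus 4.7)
The proof is a steepest-descent / change-of-variable analysis centered at the point $\zeta=x_j$, which is the unique point on $\T(0,|x_j|)$ where $|T_j(\zeta)/\zeta|$ attains its maximum $|x_j|^{-2}$. Writing $g(\zeta):=T_j(\zeta)/\zeta$, a direct computation starting from $T_j(\zeta)=c_j+r_j^2\zeta/(1-\cj{c}_j\zeta)$ shows that $g$ is a rational function of degree two with critical points exactly at $x_j$ and $y_j$ (and $y_j=\infty$ if and only if $r_j=|c_j|$); the saddle at $x_j$ along the contour is non-degenerate, with $(\log g)''(x_j)x_j^2=2|c_j|/r_j>0$, so that $|g(x_je^{i\phi})|=|x_j|^{-2}\exp(-(|c_j|/r_j)\phi^2+O(\phi^3))$. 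This saddle-point profile already predicts an asymptotic series in $n^{-1/2}$ with prefactor $|x_j|^{-2n}$; only the specific beta-function coefficients $\Gamma(k+1/2)\Gamma(n-k+3/2)/\Gamma(n+2)$ remain to be produced.

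To obtain those exact coefficients I would make the substitution $|x_j|^{2}g(\zeta)=1-w(x_j/y_j)^{2}$, a normalization chosen so that the quadratic equation $w=\mathrm{const}$ in $\zeta$ has discriminant $\mathrm{const}\cdot w\bigl[(y_j/x_j)^{2}-1-w\bigr]$. The square root of this discriminant is precisely the radical $\sqrt{(y_j/x_j)^{2}-1-w}$ appearing in \eqref{def-Rjk}, and the two inverse branches $\zeta_\pm(w)$ collapse at $\zeta=x_j$ when $w=0$ and at $\zeta=y_j$ when $w=(y_j/x_j)^{2}-1$. Vieta's formulas applied to the same quadratic give
\[
(\zeta_+-z)(\zeta_--z)=\mathrm{const}\cdot\bigl[(1-z/x_j)^{2}+w\bigl(1-2z/(y_j+x_j)\bigr)\bigr],
\]
matching the denominator in \eqref{def-Rjk}. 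As $\zeta$ traverses $\T(0,|x_j|)$ its image under $w$ wraps once around the branch cut joining $0$ and $(y_j/x_j)^{2}-1$; since the only singularity $1/\cj{c}_j$ of $g$ lies strictly outside $\cj{D(0,|x_j|)}$, this image can be deformed to a small Hankel loop around $w=0$.

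Summing the integrand over the two branches $\zeta_\pm(w)$ produces a single-valued meromorphic form in $w$ which, after simplification, is identified with $R_j(w,z)(1-w(x_j/y_j)^{2})^n\,dw/\sqrt{w[(y_j/x_j)^{2}-1-w]}$ up to the constant $|x_j|^{-2n-2}$. Expanding $R_j(w,z)=\sum_{k\geq 0}R_{j,k}(z)w^{k}$ and integrating term by term via the Eulerian identity
\[
\int_{0}^{\infty}u^{k-1/2}(1+u)^{-n-2}\,du=\frac{\Gamma(k+1/2)\Gamma(n-k+3/2)}{\Gamma(n+2)}
\]
(applied after the M\"obius rescaling that converts the $w$-Hankel loop into a Hankel integral in $u$ along the positive real axis) yields exactly the coefficients of \eqref{asymptotic-formula-1-in-z}; the overall prefactor $-|x_j|^{-2n-2}/(2\pi)$ follows from the Jacobian of the change of variable together with the branch-cut orientation. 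Uniformity on closed subsets of $\cj{\C}\setminus\T(0,|x_j|)$ is automatic because $R_j(w,z)$ is jointly analytic in $(w,z)$ for $w$ near $0$ and $z$ away from $x_j$.

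The main technical obstacle is the algebraic identification of the symmetric two-branch sum with the closed-form $R_j(w,z)$ in \eqref{def-Rjk}: mechanical in principle (Vieta plus the quadratic formula) but requiring careful bookkeeping of the discriminant's square root and of the M\"obius rescaling between $w$ and $u$. For the degenerate case \eqref{particular-case} ($r_j=|c_j|$, $z=0$) a much cleaner direct treatment is available: the substitution $\eta=\cj{c}_j\zeta$ converts $T_j(\zeta)$ to $c_j/(1-\eta)$, reducing the integral to the classical Eulerian residue
\[
\frac{1}{2\pi i}\oint_{|\eta|=1/2}\frac{r_j^{2n+2}\,d\eta}{(1-\eta)^{n+2}\eta^{n+1}}=r_j^{2n+2}\binom{2n+1}{n},
\]
and the Legendre duplication formula $\binom{2n+1}{n}=2^{2n+1}\Gamma(n+3/2)/(\sqrt{\pi}\,\Gamma(n+2))$ together with $|x_j|=1/(2r_j)$ recovers the right-hand side of \eqref{particular-case} exactly.
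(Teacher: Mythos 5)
Your plan follows essentially the same route as the paper's proof: the contour $\T(0,|x_j|)$ is deformed so that, up to an exponentially negligible piece, it runs along the curve on which $\zeta/T_j(\zeta)$ is real (the circle through $x_j$ and $y_j$ symmetric about the line $0c_j$); the two inverse branches are summed; and the resulting branch-cut integral is evaluated term by term against the Beta integral $B(k+\tfrac12,\,n-k+\tfrac32)$. The paper carries out your ``Vieta'' step concretely by inverting $\zeta/T_j(\zeta)$ through the Zhukovsky map $J(u)=u+\sqrt{u^2-1}$ and using $J_+J_-=1$, $J_++J_-=2u$, which is exactly the symmetric-function computation you describe. Two remarks. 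First, a bookkeeping caution at the one place you flagged as the obstacle: the coefficients $R_{j,k}(z)$ in \eqref{coefficients-Rjk} are Maclaurin coefficients in the variable $w=\zeta/(|x_j|^{2}T_j(\zeta))-1$, for which $(T_j(\zeta)/\zeta)^n=|x_j|^{-2n}(1+w)^{-n}$ and your Euler integral $\int_0^\infty u^{k-1/2}(1+u)^{-n-2}du$ applies verbatim (after truncating the cut, which also takes care of the fact that the Maclaurin series of $R_j$ has finite radius of convergence, so genuine term-by-term integration must be replaced by the usual Watson's-lemma truncation-plus-remainder argument). Your normalization $|x_j|^{2}T_j(\zeta)/\zeta=1-w(x_j/y_j)^{2}$ has the same two branch points but makes $(T_j(\zeta)/\zeta)^n$ a \emph{polynomial} in $w$, so the term-by-term integrals become finite-interval Jacobi-type integrals rather than the stated Gamma quotients; you must expand in the first variable (or redefine the coefficients), since the Möbius rescaling between the two does not commute with taking Maclaurin coefficients. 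Second, your treatment of \eqref{particular-case} is genuinely different from the paper's: the paper obtains it as the degenerate instance $R_j(w,0)\equiv-1$, $y_j=\infty$ of the exact formula \eqref{direct-formula}, whereas your residue computation $r_j^{2n+2}\binom{2n+1}{n}$ combined with Legendre duplication is a self-contained and verifiably correct alternative, and it buys an independent check of the constant in \eqref{asymptotic-formula-1-in-z}.
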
 
 \begin{proposition} \label{prop-integral-error-2}
For every $j\in \N_s$, there exists a constant $M_j$ such that for every integer $n\geq 0$ and $z\in \T(0,|x_j|)$,
 \begin{align}\label{BigO-formula-1}
\left|\frac{1}{2 \pi i} \ointctrclockwise\limits_{\T(0,|x_j|)} 
T'_j(\zeta)\frac{\left(T_j(\zeta)/\zeta\right)^{n}-\left(T_j(z)/z\right)^{n}}{\zeta-z}  d\zeta\right|
\leq M_j|x_j|^{-2n}
\end{align}
Moreover, 
\begin{align}\label{BigO-formula-1iminf}
\lim_{n\to\infty}\frac{|x_j|^{2(n+2)}}{2 \pi i} \ointctrclockwise\limits_{\T(0,|x_j|)} 
T'_j(\zeta)\frac{\left(T_j(\zeta)/\zeta\right)^{n}-\left(T_j(x_j)/x_j\right)^{n}}{\zeta-x_j}  d\zeta=-1/2.
\end{align}
 \end{proposition}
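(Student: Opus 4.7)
My plan is to set $u(\zeta):=T_j(\zeta)/\zeta$ and denote by $G_n(z)$ the integral appearing in \eqref{BigO-formula-1}. Both parts rest on a Laplace-type analysis centred at $\zeta=x_j$, which is a non-degenerate saddle point of $\log u$ on the circle. Indeed, from $T_j(z)=[(r_j^2-|c_j|^2)z+c_j]/(1-\overline{c_j}z)$ and $x_j=(c_j/|c_j|)/(|c_j|+r_j)$ one computes directly
\[
T'_j(x_j)=u(x_j)=\frac{1}{|x_j|^2},\qquad u'(x_j)=0,\qquad (\log u)''(x_j)=\frac{2|c_j|}{r_j\,x_j^2},
\]
so $|u|$ attains a strict non-degenerate maximum at $x_j$ on $\T(0,|x_j|)$ with Gaussian width $O(n^{-1/2})$. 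As a preliminary, I would note that the integrand has a removable singularity at $\zeta=z$, so $G_n$ extends analytically across $\T(0,|x_j|)$; a contour shift (the integrand is meromorphic in $\zeta$ with poles only at $\zeta=0$ and $\zeta=1/\overline{c_j}$) combined with residue calculus yields, for $z\in\T(0,|x_j|)$ and any $|x_j|<r<1/|c_j|$, the decomposition
\[
G_n(z)=J_n^{(r)}(z)-T'_j(z)u(z)^n,\qquad J_n^{(r)}(z):=\frac{1}{2\pi i}\ointctrclockwise\limits_{\T(0,r)}\frac{T'_j(\zeta)u(\zeta)^n}{\zeta-z}\,d\zeta,
\]
which cleanly separates the pole-at-$z$ contribution from the regular one.

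For part \eqref{BigO-formula-1} I would bound each piece separately. Immediately $|T'_j(z)u(z)^n|\leq C|x_j|^{-2n}$ since $|u(z)|\leq|x_j|^{-2}$ on the circle. For $J_n^{(r)}(z)$ a naive Cauchy estimate introduces an unwanted $\sqrt{n}$ factor; to remove it I would perform the same saddle-point analysis that underlies Proposition \ref{prop-integral-error-1}, now on the contour $\T(0,r)$ with $r-|x_j|$ chosen of order $n^{-1/2}$, so that the growth $(m_j(r)/r)^n=|x_j|^{-2n}\exp O(n(r-|x_j|)^2)$ remains bounded while the denominator $|\zeta-z|\geq r-|x_j|$ does not become too small. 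The resulting Gaussian integration gives $|J_n^{(r)}(z)|\leq C|x_j|^{-2n}$ uniformly in $z\in\T(0,|x_j|)$, and combining with the first piece yields the desired constant $M_j$.

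For part \eqref{BigO-formula-1iminf} I would parametrize $\zeta=x_je^{i\phi}$ and rewrite the claim as
\[
|x_j|^{2(n+2)}G_n(x_j)=\frac{|x_j|^4}{2\pi}\int_{-\pi}^{\pi}T'_j(x_je^{i\phi})\,\frac{[(u(x_je^{i\phi})/u(x_j))^n-1]\,e^{i\phi}}{e^{i\phi}-1}\,d\phi.
\]
Using $(u(x_je^{i\phi})/u(x_j))^n=\exp[-n|c_j|\phi^2/r_j+O(n\phi^3)]$, I would split the integration range into a Laplace region $|\phi|\leq n^{-1/4}$ and its complement. On the complement, $(u/u(x_j))^n$ decays exponentially, the integrand converges in $L^1$ to $-T'_j(x_je^{i\phi})e^{i\phi}/(e^{i\phi}-1)$, and the resulting principal-value integral is evaluated via a Sokhotski--Plemelj identity. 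On the Laplace region, the substitution $\phi=t/\sqrt{n}$ shows that the leading Gaussian contribution is odd in $t$ and vanishes by parity, so I would carry the expansion to next order, collecting the cubic correction to $\log u$, the linear Taylor correction to $T'_j$, and the expansion $e^{i\phi}/(e^{i\phi}-1)=-i/\phi+1/2+O(\phi)$. Their Gaussian moments combine with the boundary principal-value contribution to yield the universal real value $-1/2$.

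The main obstacle is the bookkeeping of these several competing second-order Laplace corrections: each is individually of size $O(n^{-1/2}\cdot n^{-1/2})=O(1)$, and their imaginary parts must cancel against the imaginary component of the Sokhotski--Plemelj contribution so that the final answer is real and equals exactly $-1/2$. The universality of this limit (independent of $j$) provides a welcome consistency check on the algebra.
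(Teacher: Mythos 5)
Your route is genuinely different from the paper's, which contains no saddle-point analysis at all: there, the quotient $\bigl((T_j(\zeta)/\zeta)^n-(T_j(z)/z)^n\bigr)/(\zeta-z)$ is telescoped into $\sum_{k=0}^{n-1}(T_j(z)/z)^k(\cdots)(T_j(\zeta)/\zeta)^{n-1-k}$, which reduces everything to the integrals $I_m(0)$ and $I_m(p_j)$ already evaluated in closed form in the proof of Proposition \ref{prop-integral-error-1}; the bound \eqref{BigO-formula-1} then comes from estimating sums of the type $\sum_k |t|^k/\sqrt{n-k}$ using the cusp of the image curve $t=|x_j|^2T_j(z)/z$ at $t=1$, and the limit at $z=x_j$ from two explicit real integrals. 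Your plan for \eqref{BigO-formula-1} is nonetheless workable and arguably cleaner: the integrand is analytic in the annulus $|x_j|\le|\zeta|\le r<1/|c_j|$, the contour shift and the residue at $\zeta=z$ give $G_n=J_n^{(r)}-T_j'(z)u(z)^n$, and with $r-|x_j|\asymp n^{-1/2}$ the angular Gaussian decay compensates the factor $(r-|x_j|)^{-1}$, so $J_n^{(r)}(z)=O(|x_j|^{-2n})$ uniformly. Your preliminary identities ($u'(x_j)=0$, $T_j'(x_j)=u(x_j)=|x_j|^{-2}$, $(\log u)''(x_j)=2|c_j|/(r_jx_j^2)$) all check out, though the two-variable expansion of $\log|u|$ about the saddle and the region away from it still need to be written down.

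The genuine gap is in part \eqref{BigO-formula-1iminf}: the value $-1/2$ is asserted rather than derived, and the mechanism you describe for producing it is not what actually happens. Carrying out your own scheme, the limit is a pure half-residue and nothing more: with $w(\phi)=u(x_je^{i\phi})/u(x_j)$, the singular piece $T_j'(x_j)\bigl(w(\phi)^n-1\bigr)/(i\phi)$ integrates to $o(1)$ (the Gaussian part is odd, and the cubic correction contributes $O\bigl(n\int\phi^2e^{-cn\phi^2}d\phi\bigr)=O(n^{-1/2})$), while the remaining, bounded part of the integrand is handled by dominated convergence and yields exactly $-\,\mathrm{p.v.}\frac{1}{2\pi i}\oint T_j'(\zeta)(\zeta-x_j)^{-1}d\zeta=-\tfrac12T_j'(x_j)=-\tfrac12|x_j|^{-2}$. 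This is already real, so there is no imaginary part to be cancelled, contrary to your closing paragraph, and none of the ``competing second-order corrections'' survives in the limit. Multiplying by your prefactor $|x_j|^4$ gives $-|x_j|^2/2$, not $-1/2$; the normalization that produces $-1/2$ is $|x_j|^{2n+2}$ (the exponent $2(n+2)$ in the displayed statement is evidently a misprint for $2(n+1)$ --- the paper's own proof computes $\lim_n|x_j|^{2n+2}\chi_n(x_j)=-1/2$, in agreement with the half-residue computation above). So as it stands, part (b) of your proposal leaves the decisive computation undone, and the answer you announce is not the one your setup produces.
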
 
 
 For the next proposition, recall that the functions $\Theta_\sigma$ have been introduced in \eqref{definition-theta}.
\begin{proposition}\label{cdcd}
If $j \in \N_s$ is such that $|a_j|>0$,  then  
\begin{align}\label{series-behavior}
\sum_{v=1}^\infty   (T_j ^v(z))^n (T_j ^v)' (z) ={} &\frac{a_j^{n+1}}{n}\frac{\Phi'_j(z)}{\Phi_j(z)}\Theta_{\sigma_j^2}(n\alpha_j\Phi_j(z))+ O(|a_j|^n/n^2)
\end{align}
uniformly on closed subsets of  $\cj{D(0,|a_j|)}\setminus\{a_j\}$ as $n \to \infty$.
\end{proposition}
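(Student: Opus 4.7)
The plan is to compute the sum exactly using the explicit M\"obius form of $\Phi_j^{-1}$, isolate the factor $a_j^{n+1}\alpha_j\Phi_j'(z)$, and compare the resulting series term-by-term with the Theta-function representation of the main term. The key analytic ingredients are a uniform bound on theta-like sums obtained via multiplicative quasi-periodicity, and a quantitative version of the strict inequality $|(1+\rho^{-1}\tau)/(1+\rho\tau)|<1$ on $\cj{B_j}\setminus\{0\}$.

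First, using $T_j^v(z) = \Phi_j^{-1}(\sigma_j^{2v}\Phi_j(z))$ and the explicit form of $\Phi_j^{-1}$, a direct calculation gives
\[
(T_j^v(z))^n(T_j^v)'(z) = a_j^{n+1}\alpha_j\Phi_j'(z)\cdot \sigma_j^{2v} f_n(\sigma_j^{2v} w),\qquad f_n(\tau):=\frac{(1+\rho^{-1}\tau)^n}{(1+\rho\tau)^{n+2}},
\]
with $w:=\Phi_j(z)$ and $\rho:=|a_j|$. Setting $g_n(\tau):=e^{n\alpha_j\tau}$ and noting that $\Theta_{\sigma_j^2}(n\alpha_j w)/(n\alpha_j w)=\sum_{v\in\Z}\sigma_j^{2v}g_n(\sigma_j^{2v}w)$, the proposition reduces to
\[
\sum_{v\geq 1}\sigma_j^{2v}f_n(\sigma_j^{2v}w)-\sum_{v\in\Z}\sigma_j^{2v}g_n(\sigma_j^{2v}w) = O(n^{-2})
\]
uniformly in $w$ on compact subsets of $\cj{B_j}\setminus\{0\}$ (the image under $\Phi_j$ of $\cj{D(0,\rho)}\setminus\{a_j\}$).

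Second, I would establish the key theta-like bound: for every integer $\ell\geq 1$,
\[
\sum_{v\in\Z}\sigma_j^{2\ell v}e^{\sigma_j^{2v}t} = O(|t|^{-\ell})
\]
uniformly on cones $\{\Re t\leq -\delta\}$. This follows from the multiplicative quasi-periodicity $F_\ell(\sigma_j^2 t)=F_\ell(t)$ of $F_\ell(t):=t^\ell\sum_{v\in\Z}\sigma_j^{2\ell v}e^{\sigma_j^{2v}t}$, which forces $F_\ell$ to be bounded by its supremum over a compact fundamental domain. The tail $\sum_{v\leq 0}$ is doubly-exponentially small, so the same bound applies to $\sum_{v\geq 1}$; substituting $t=n\alpha_j w$ then gives $\sum_{v\geq 1}\sigma_j^{2\ell v}|g_n(\sigma_j^{2v}w)| = O(n^{-\ell})$ uniformly on compact subsets of $\cj{B_j}\setminus\{0\}$.

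Third (the main step), factor $f_n=g_n h_n$ with $h_n(\tau)=(1+\rho\tau)^{-2}\exp(nA(\tau))$ and $A(\tau):=\log(1+\rho^{-1}\tau)-\log(1+\rho\tau)-\alpha_j\tau=O(\tau^2)$, and split the $v\geq 1$ sum by the size of $\sigma_j^{2v}|w|$ relative to $\delta/\sqrt n$ (for fixed small $\delta$). In the range $\sigma_j^{2v}|w|\leq\delta/\sqrt n$, the bound $|e^B-1|\leq|B|e^{|B|}$ with $B:=nA(\tau)-2\log(1+\rho\tau)$ yields $|h_n(\tau)-1|\leq C(|\tau|+n|\tau|^2)$; the contribution is controlled by
\[
C|w|\sum_{v\geq 1}\sigma_j^{4v}|g_n|+Cn|w|^2\sum_{v\geq 1}\sigma_j^{6v}|g_n| = O(n^{-2})+O(n\cdot n^{-3})=O(n^{-2})
\]
via the theta-like bound with $\ell=2,3$. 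In the complementary range $\sigma_j^{2v}|w|>\delta/\sqrt n$, both $|g_n(\tau_v)|$ (direct estimate, using that $|\Re w|/|w|$ is bounded below on compact subsets of $\cj{B_j}\setminus\{0\}$ by disk-tangency) and $|f_n(\tau_v)|$ (using the quantitative inequality $1-|(1+\rho^{-1}\tau_v)/(1+\rho\tau_v)|^2\geq C\sigma_j^{2v}|w|^2$ for $v\geq 1$, $w\in\cj{B_j}$, reflecting that $\sigma_j^{2v}\cj{B_j}$ is a strictly smaller disk tangent to $0$) decay exponentially in $\sqrt n$ or $n$. Since at most $O(\log n)$ values of $v$ fall here, the total contribution is exponentially small. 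The negative-$v$ theta tail is handled identically.

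The main obstacle is establishing the two quantitative inputs of the argument: the theta-like bound of the second step (without which the $O(n^{-2})$ estimate cannot be closed) and the geometric inequality $1-|(1+\rho^{-1}\tau_v)/(1+\rho\tau_v)|^2\geq C\sigma_j^{2v}|w|^2$ required in the complementary range. Both rely essentially on the disk $B_j$ being tangent to the origin and on the $\sigma_j^2$-action being a contraction toward it. Once these are in hand, the proof reduces to routine split-range estimation.
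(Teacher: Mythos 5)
Your proposal is correct in substance but takes a genuinely different route from the paper's. Both begin with the same reduction: setting $w=\Phi_j(z)$ and using the explicit M\"obius form of $\Phi_j^{-1}$, the claim becomes the comparison of $\sum_{v\geq 1}\sigma_j^{2v}f_n(\sigma_j^{2v}w)$ with $\sum_{v\in\Z}\sigma_j^{2v}e^{n\alpha_j\sigma_j^{2v}w}$ on compact subsets of $\cj{B_j}\setminus\{0\}$, the prefactor $a_j^{n+1}\alpha_j\Phi_j'(z)$ being $O(|a_j|^n)$ there; your algebra here checks out. From that point the paper does \emph{not} compare the series termwise: it converts both sums into integrals against the sawtooth function $S_j(x)=\sigma_j^{2\lfloor\log_{\sigma_j^2}x\rfloor}$ --- the theta function via \eqref{Theta} and the tail series via the Abel-summation identity \eqref{integral-represetation-3} --- and then compares the integrands using the single pointwise estimate \eqref{estimate-exponential}, importing the technical details from \cite{DMN-ARXIV}. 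Your two-scale splitting at $\sigma_j^{2v}|w|\sim\delta n^{-1/2}$ replaces that machinery by two elementary lemmas, both of which are in fact true: the weighted theta bounds follow from quasi-periodicity exactly as you say, and the contraction inequality follows from the identity $\rho^2-|\lambda_j(\tau)|^2=(1-\rho^4)\bigl(2c_0|\Re\tau|-|\tau|^2\bigr)/|1+\rho\tau|^2$ with $c_0=\rho/(1+\rho^2)$, which together with $|w|^2\leq 2c_0|\Re w|$ on $\cj{B_j}$ gives $2c_0|\Re\tau_v|-|\tau_v|^2\geq(1-\sigma_j^2)\sigma_j^{2v}|w|^2$ for $v\geq 1$. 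One imprecision should be fixed: the bound $\sum_{v\in\Z}\sigma_j^{2\ell v}e^{\sigma_j^{2v}t}=O(|t|^{-\ell})$ does not hold uniformly on the half-plane $\{\Re t\leq-\delta\}$ (take $t=-\delta+iM$ with $M\to\infty$: the sum stays bounded away from $0$ while $|t|^{-\ell}\to 0$); it holds on sectors $\{|\arg t-\pi|\leq\pi/2-\epsilon,\ |t|\geq\delta\}$, which is what you actually use, since the tangency of $B_j$ to the imaginary axis forces $\arg w$ to be bounded away from $\pm\pi/2$ on compact subsets of $\cj{B_j}\setminus\{0\}$. In sum, your argument buys self-containedness at the cost of proving the two auxiliary estimates from scratch, while the paper's route is shorter because it leans on \cite{DMN-ARXIV} and reuses intermediate facts such as \eqref{diference-theta} elsewhere.
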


\subsection{Proof of Theorem \ref{thm-asymp-leading}}
By Theorem \ref{thm-poly},  we can write 
 \begin{align}\label{series-for-kappa_n}
(n+1) \kappa_n ^{-2} = 1+f_{n, 2} (0)+\sum_{k=2}^\infty f_{n, 2k} (0).
\end{align}
Using \eqref{ineq:first-1} with $r=\rho_x$, we find  that  for all $n$ large,
\begin{align}\label{tail-estimate}
\sum_{k=2}^\infty |f_{n, 2k} (0)|\leq {} &\frac{(\rho_x-m(\rho_x))V(\rho_x,n)^2}{\rho_x(1-V(\rho_x,n))}=O((m(\rho_x)/\rho_x)^{2n}).
\end{align}
From the definition \eqref{f2k+1cont2} and Lemma \ref{estimate-sum-over-tau_v}, we have 
\begin{align*}
f_{n,1}(z)= &\sum_{j=1}^s T_j(z)^{n+1}- \sum_{j=1}^sT_j(0)^{n+1}+O((m^2(\rho_x))^n)
\end{align*}
uniformly for $z\in \cj{D(0,\rho_x)}$ as $n\to\infty$. This and the definition \eqref{functionsf1} readily yield
\begin{align*}
f_{n,2} (0)={}&-\sum_{j=1}^s  \frac{1}{2 \pi i} \ointctrclockwise\limits_{\T(0,\rho_x)} \frac{T_j(\zeta)^{n+1}}{\zeta^{n+2}} d\zeta +O((m^2(\rho_x)/\rho_x)^n).
\end{align*}
Inserting this estimate and  that of \eqref{tail-estimate} in  \eqref{series-for-kappa_n} we find
 \begin{align}\label{baserelation}
 \begin{split}
(n+1) \kappa_n ^{-2} ={} & 1-\sum_{j=1}^s  \frac{1}{2 \pi i} \ointctrclockwise\limits_{\T(0,\rho_x)} \frac{T_j(\zeta)^{n+1}}{\zeta^{n+2}} d\zeta+O(\beta_x^{n}),
\end{split}
\end{align}
with 
\[
\beta_x=\max\left\{\frac{m^2(\rho_x)}{\rho_x},\frac{m(\rho_x)^2}{\rho_x^2}\right\}<\rho_x^{-1}.
\]

Cauchy's theorem and integration by parts yield 
\begin{align*}
 \ointctrclockwise\limits_{\T(0,\rho_x)} \frac{T_j(\zeta)^{n+1}}{\zeta^{n+2}} d\zeta ={} & \ointctrclockwise\limits_{\T(0,|x_j|)}\frac{T_j(\zeta)^{n+1}}{\zeta^{n+2}} d\zeta=\ointctrclockwise\limits_{\T(0,|x_j|)} 
\frac{T'_j(\zeta)\left(T_j(\zeta)/\zeta\right)^{n}}{\zeta}  d\zeta,
\end{align*}
which together with \eqref{baserelation} and Proposition \ref{prop-integral-error-1} (with $z=0$) readily prove both Theorem \ref{thm-asymp-leading} and \eqref{degenerate-case}.

\subsection{Proof of Theorem \ref{thm-cmcds}} We first prove \eqref{expansion-exterior-asymptotics} and the validity of \eqref{exterior-asymptotics} for $r\geq \rho_x$. Let $\rho\in (\rho_a,\rho_a^{-1})$. During the proof of Theorem \ref{thm-poly}, we obtained (see \eqref{P_n-for-z-large})
 \begin{align}\label{other-important-identity-2}
 \begin{split}
\frac{P_n(z)}{z^n}-1={} & 
 -\frac{1}{2 \pi i} \ointctrclockwise\limits_{\T(0,\rho)}f_{n,o}(\zeta) \frac{\zeta^{-n-1}}{\zeta - z }  d\zeta\\
 &-\frac{z}{(n+1)2 \pi i} \ointctrclockwise\limits_{\T(0,\rho) } f_{n,o}(\zeta) \frac{\zeta^{-n-1}}{(\zeta - z)^2 }  d \zeta
 \end{split}
 \end{align}
 for all $z\in\Delta(0,\rho)$ and $n$ larger than some ($\rho$-dependent) number. Here and as previously introduced,
 \[
   f_{n,o}(z)=\sum _{k=1} ^\infty f_{n, 2k-1} (z).
 \]

 Just as we derived \eqref{other-important-identity} (see the paragraph succeeding that identity), we can use integration by parts in \eqref{other-important-identity-2} to get
  \begin{align}\label{ineq-vrox-0}
\frac{P_n(z)}{z^n}-1= -\frac{1}{(n+1)2 \pi i} \ointctrclockwise\limits_{\T(0,\rho) } \frac{ f'_{n,o}(\zeta)\zeta^{-n}}{\zeta - z }  d \zeta,\qquad  z\in\Delta(0,\rho).
 \end{align}
Since
\[
f'_{n,1}(z)=(n+1)\sum_{j=1}^sT_j'(z)T_j(z)^n+(n+1)\sum_{\tau:\ell(\tau)\geq 2} \tau'(z)\tau(z)^n,
\]
we obtain from \eqref{estimate-derivative-f_n,1}, \eqref{ineq:first-2}, and \eqref{f2k+1cont2} that
 \begin{align}\label{ineq-vrox}
 \begin{split}
 -\frac{1}{(n+1)2 \pi i} \ointctrclockwise\limits_{\T(0,\rho) }\frac{ f'_{n,o}(\zeta)\zeta^{-n}}{\zeta - z }  d \zeta= {} &-\sum_{j=1}^s\frac{1}{2 \pi i} \ointctrclockwise\limits_{\T(0,\rho) } \frac{ T'_j(\zeta)(T_j(\zeta)/\zeta)^{n}}{\zeta - z }  d \zeta\\
& + O((m^2(\rho)/\rho)^n)
 +O((m(\rho)/\rho)^{2n})
\end{split}
 \end{align} 
 uniformly on closed subsets of $\cj{\C}\setminus \T(0,\rho)$ as $n\to\infty$. 

Let us  momentarily  set
\begin{align*}
v(\rho):= {} &\max\{|T_j(z)/z|:|z|=\rho, \ j\in \N_s, \ |x_j|>\rho_x\},\\
\tilde{v}(\rho):={} & \max\{v(\rho), m^2(\rho)/\rho,(m(\rho)/\rho)^{2}\}.
\end{align*}
Combining  \eqref{ineq-vrox-0} and \eqref{ineq-vrox}, we obtain 
\begin{align}\label{ineq-vrox-1}
 \begin{split}
\frac{P_n(z)}{z^n}-1= {} &-\sum_{j:|x_j|=\rho_x}^s\frac{1}{2 \pi i} \ointctrclockwise\limits_{\T(0,\rho) }\frac{ T'_j(\zeta)(T_j(\zeta)/\zeta)^{n}}{\zeta - z }  d \zeta+ O(\tilde{v}(\rho)^n) 
\end{split}
 \end{align} 
 uniformly on closed subsets of $\Delta(0,\rho)$ as $n\to\infty$.  

In terms of the functions $m_j$ introduced in \eqref{defmj},
\begin{align*}
v(\rho)= {} &\max\{m_j(\rho)/\rho: \ j\in \N_s, \ |x_j|>\rho_x\}.
\end{align*}
By \eqref{vrhox}, $v(\rho_x)<\rho_x^{-2}$, so that $
\tilde{v}(\rho_x)<\rho_x^{-2}$ as well. Then,  choosing  $\rho=\rho_x$  in \eqref{ineq-vrox-1} and replacing the integrals in  \eqref{ineq-vrox-1} by the  expansions  \eqref{asymptotic-formula-1-in-z}  quickly yields \eqref{expansion-exterior-asymptotics}. 

For $r>\rho_x$, the equality \eqref{exterior-asymptotics} follows directly from \eqref{expansion-exterior-asymptotics}. To prove \eqref{exterior-asymptotics} for $r=\rho_x$, we choose $\rho<\rho_x$ so closed to $\rho_x$ that
\begin{align}\label{tildevro-estimate}
\tilde{v}(\rho)<\rho_x^{-2}.
\end{align} 
Since for every $z\in \T(0,\rho_x)$, the function (in the variable $\zeta$) $T_j'(\zeta)/(\zeta-z)$ is analytic in $D(0,\rho_x)$, we get by an application of  Cauchy's integral theorem that for all $z\in \T(0,\rho_x)$,
\begin{align*} 
\ointctrclockwise\limits_{\T(0,\rho) } \frac{ T'_j(\zeta)(T_j(\zeta)/\zeta)^{n}}{\zeta - z }  d \zeta={} &\ointctrclockwise\limits_{\T(0,\rho) } T'_j(\zeta)\frac{ (T_j(\zeta)/\zeta)^{n}-(T_j(z)/z)^{n}}{\zeta - z }  d \zeta\\
={} &\ointctrclockwise\limits_{\T(0,\rho_x) } T'_j(\zeta)\frac{ (T_j(\zeta)/\zeta)^{n}-(T_j(z)/z)^{n}}{\zeta - z }  d \zeta.
 \end{align*} 
Because of this equality, it follows from \eqref{ineq-vrox-1}, \eqref{BigO-formula-1} and  \eqref{tildevro-estimate} that \eqref{exterior-asymptotics} holds true for $r=\rho_x$.
 
We now prove \eqref{cor-eqn-one}, \eqref{cor-eqn-two}, and the validity of \eqref{exterior-asymptotics} for $\rho_a<r<\rho_x$. 

Once $n$ is sufficiently large,  the inequality \eqref{induction-case-1} holds true with $v=1$, and so we have
\[
\sum _{k=0} ^\infty\sum_{\tau \in \caliT^*}\left|\left(f_{n,2k} (\tau(z)) \tau(z)^{n+1}\right)'\right|\leq \frac{(n+1)\mu(\rho_x)m(\rho_x)^n}{1-V(\rho_x,n)},\quad |z|\leq \rho_x,
\] 
It then follows  from  \eqref{f2k+1cont2}  and Theorem \ref{thm-poly} that for all  $|z|<\rho_x$,
\begin{align*}
P_n (z)= {} &(n+1)^{-1}\sum_{\tau \in \caliT}\sum _{k=0} ^\infty\left(f_{n,2k} (\tau(z)) \tau(z)^{n+1}\right)'\\
={} &\sum_{\tau \in \caliT}\tau(z)^{n}\tau'(z)\left(1+
K_n (\tau(z)\right),
\end{align*} 
with
\begin{align*}
K_n(z)={} & \sum _{k=1} ^\infty f_{n,2k} (z)+\frac{z}{n+1}\sum _{k=1} ^\infty f'_{n,2k} (z),\quad |z|<\rho_x.
\end{align*}

According to \eqref{other-important-identity}, 
 \begin{align*}
\begin{split}
K_n(z)={} & - \frac{(n+1)^{-1}}{2 \pi i} \ointctrclockwise\limits_{\T(0,\rho_x) } \frac{ (\sum _{k=1} ^\infty f'_{n,2k-1}(\zeta))\zeta^{-n}}{\zeta - z }  d \zeta,
\end{split}
\end{align*}
which, together with \eqref{ineq-vrox}  for $\rho=\rho_x$ and the expansion \eqref{asymptotic-formula-1-in-z}, yields  \eqref{estimate-K_n}.

By \eqref{cor-eqn-one}-\eqref{estimate-K_n}, we have
\begin{align}\label{formula-for-Pn-inside-circle-rox}
\begin{split}
P_n(z)= {} &z^n\left(1+O\left(n^{-1/2}\rho_x^{-2n}\right)\right)+\sum_{\tau \in \caliT^*}\tau(z)^{n}\tau'(z)\\
& +O\left(n^{-1/2}\rho_x^{-2n}\sum_{\tau \in \caliT^*}|\tau(z)|^{n}|\tau'(z)|\right)
\end{split}
\end{align}
locally uniformly in $D(0,\rho_x)$ as $n\to\infty$. 

It follows from \eqref{formula-for-Pn-inside-circle-rox}, \eqref{estimate-derivative-f_n,1}, \eqref{defmandMr}, and \eqref{maxproperty}, that for every $r\in (\rho_a,\rho_x)$,
\begin{align*}
\frac{P_n (z)}{z^n}-1= {} & \sum_{j=1}^sT_j'(z)\left(\frac{T_j(z)}{z}\right)^n+ O\left(n^{-1/2}\rho_x^{-2n}\right)+O((m^2(r)/r)^n)\\
={} & O((m(r)/r)^n)
\end{align*}
uniformly on $\T(0,r)$ as $n\to\infty$.

We finish then with the  proof of  \eqref{cor-eqn-two}. Since  $T_j(D(0,\rho_x)) \subset D(0,\rho_x)$ for each $j \in \N_s$, we can evaluate equation \eqref{cor-eqn-one} at $T_j$ to get
\begin{equation}
\label{p-n-t-j}
P_n(T_j(z)) =  \sum_{\tau \in \mathcal{T}} ((\tau \circ T_j)(z))^n  \tau'(T_j(z))  ( 1 + (K_n  \circ \tau \circ T_j) (z)), \quad |z|<\rho_x.
\end{equation}
The only transformation in $\mathcal{T} \setminus \mathcal{T}^*$ is the identity function, and  $\mathcal{T}^* = \bigcup_{j=1} ^s \{ \tau T_j : \tau \in \mathcal{T}\}$, this being a disjoint union. Hence, equation \eqref{cor-eqn-one} can be written as   
\begin{align*}
P_n(z)= {}& z^n(1+ K_n(z)) +  \sum_{\tau \in \caliT^*}\tau (z)^n  \tau '(z)  ( 1 + (K_n  \circ \tau) (z))\\
={} &z^n (1+ K_n(z)) \\
&+ \sum_{j=1}^s \sum_{\tau \in \mathcal{T}}((\tau \circ T_j )(z))^n (\tau \circ T_j) '(z) ( 1 + (K_n \circ  \tau \circ T_j) (z)), 
 \end{align*}
which is another way to write \eqref{cor-eqn-two}, owing to  (\ref{p-n-t-j}). 
\subsection{Proof of Theorem \ref{thm-cmcds-2}}
According to \eqref{formula-for-Pn-inside-circle-rox} and \eqref{estimate-derivative-f_n,1} (recall that $m(\rho_a)=\rho_a$)
\begin{align*}
P_n(z)= {} &z^n+\sum_{\tau \in \caliT^*}\tau(z)^{n}\tau'(z)+O\left(n^{-1/2}(\rho_a/\rho_x^{2})^{n}\right)
\end{align*}
uniformly in $z\in\cj{D(0,\rho_a)}$ as $n\to\infty$.

The family $\mathcal{T}_j$ was defined in \eqref{defTj} as the set of all transformations $\tau$ whose terminal operator is $T_j$.  Therefore, we can write
\begin{align*}
\sum_{\tau \in \caliT^*}\tau(z)^{n}\tau'(z)=\sum_{j:|a_j|=\rho_a}\sum_{\tau \in \caliT_j}\tau(z)^{n}\tau'(z)+\sum_{j:|a_j|<\rho_a}\sum_{\tau \in \caliT_j}\tau(z)^{n}\tau'(z).
\end{align*}

From \eqref{ineq:mandaM3}, we know that for every $\tau\in \caliT$, $\tau(\cj{D(0,\rho_a)})\subset \cj{D(0,\rho_a)}$, and because of \eqref{between},  
\[
\varrho_a:=\max\{|T_j(z)|:z\in \cj{D(0,\rho_a)},\ j\in \N_s,\ |a_j|<\rho_a \} <\rho_a,
\]
so that $\tau(\cj{D(0,\rho_a)})\subset \cj{D(0,\varrho_a)}$ for all $\tau\in \caliT_j$ with $|a_j|<\rho_a$. Hence,
\begin{align}\label{asympt-preliminary}
\begin{split}
P_n(z)= {} &z^n+\sum_{j:|a_j|=\rho_a}\sum_{\tau \in \caliT_j}\tau(z)^{n}\tau'(z)+O(\varrho_a^n)+O\left(n^{-1/2}(\rho_a/\rho_x^{2})^{n}\right)
\end{split}
\end{align}
uniformly on $\cj{D(0,\rho_a)}$ as $n\to\infty$. 

The set $\mathcal{T} \setminus \mathcal{T}_j$ is the collection of all transformations with a terminal operator different from $T_j$, together with the identity transformation $T_0$, and note that
\[
\mathcal{T}_j = \bigcup _{v=1}^\infty  \{ T_j^v \tau: \tau \in \mathcal{T} \setminus \mathcal{T}_j \} .
\] 
Therefore,  we can write 
\begin{align}\label{sum-tau-not-in-Tj}
\sum_{\tau \in \mathcal{T}_j } \tau(z)^n\tau'(z) &= 
\sum_{\tau \in \mathcal{T} \setminus \mathcal{T}_j} 
\tau'(z)\sum_{v=1} ^\infty  
T_j ^v(\tau(z))^n (T_j ^v)'(\tau(z)).
\end{align}

Let $\epsilon>0$ be such that $D(a_j,\epsilon)\subset D(c_j,r_j)$ whenever $|a_j|=\rho_a$, and let us set 
\[
E_j:=\cj{D(0,\rho_a)}\setminus D(a_j,\epsilon),\qquad E_\epsilon=\bigcap_{j:|a_j|=\rho_a}E_j.
\]

If $\tau\not\in \mathcal{T}_j$, then either $\tau$ is the identity (in which case $\tau(E_\epsilon)\subset E_j$), or $\tau=T_k\tau_1$ for some $k\not=j$ and some $\tau_1\in \mathcal{T}$, so that by \eqref{inclusion}, $\tau(\cj{D(0,\rho_a)})\subset D(c_k,r_k)$. Since $D(c_k,r_k)\cap  D(c_j,r_j)=\emptyset$, we conclude that $\tau(\cj{D(0,\rho_a)})\subset E_j$. Summarizing, we have found that $\tau(E_\epsilon)\subset E_j$ for all $\tau\not\in \mathcal{T}_j$, which allows us to apply Proposition \ref{cdcd} to the inner sum in the right-hand side of \eqref{sum-tau-not-in-Tj} and get
\begin{align*}
\sum_{\tau \in \mathcal{T}_j } \tau(z)^n  \tau'(z) &= \frac{a_j^{n+1}}{n}
\sum_{\tau \in \mathcal{T} \setminus \mathcal{T}_j} \!\!
\tau'(z)\left(\frac{\Phi'_j(\tau(z))}{\Phi_j(\tau(z))}\Theta_{\sigma_j^2}(n\alpha_j\Phi_j(\tau(z)))+O(n^{-1})\right)
\end{align*}
uniformly in $z\in E_\epsilon$ as $n\to\infty$. Inserting this formula into \eqref{asympt-preliminary} quickly yields \eqref{behavior-inside-Drhoa}, since every closed subset of $\cj{D(0,\rho_a)}\setminus \{a_j:|a_j|=\rho_a\}$ is contained in some $E_\epsilon$ with sufficiently small $\epsilon$.

To prove \eqref{behavior-at-points-aj}, we evaluate \eqref{cor-eqn-two} at $t=a_j$ ($j$ such that $|a_j|=\rho_a$) to get 
\begin{equation*}
P_n(a_j) = a_j^n+ 
P_n(a_j)  T_j '(a_j)+ \sum_{k\in\N_s\setminus\{j\}} P_n(T_k(a_j))  T_k '(a_j)+O\left(\frac{1}{\sqrt{n}}\left(\frac{\rho_a}{\rho_x^{2}}\right)^{n}\right).
 \end{equation*}

From \eqref{behavior-inside-Drhoa}, we know that  $P_n(z)=O(\rho_a^n/n)$ for every $z\in D(0,\rho_a)$, and if $k\not=j$, then $T_k(a_j)\in D(0,\rho_a)$, so that, as $n\to \infty$, 
\begin{equation*}
P_n(a_j) = a_j^n+ P_n(a_j)  T_j '(a_j)+ O(\rho_a^n/n),
\end{equation*}
which turns into \eqref{behavior-at-points-aj} after computing $T_j '(a_j)$.

\section{Proofs of the auxiliary propositions}

\subsection{Proof of Proposition \ref{prop-integral-error-1}}

The points $x_j$ and $y_j$, as defined by \eqref{def-of-xj-yj}, are the reflections about the unit circle of the two points where the circle $\T(c_j,r_j)$ intersects the line that passes through $0$ and $c_j$. Manipulating \eqref{formula-Tj-1} we can get to express $T_j$ in terms of $x_j$ and $y_j$,  resulting in 
\begin{align}\label{formula-Tj}
T_j(z)=\frac{x_j}{\cj{x}_j}\frac{\frac{y_j+x_j}{2}-z}{y_jx_j-z\frac{y_j+x_j}{2}}.
\end{align}

Using \eqref{formula-Tj} we can compute the derivative of $T_j(z)/z$ and see that    $x_j$ and $y_j$  are the critical points of $T_j(z)/z$. It can be easily verified that 
\[
1<|x_j|<|a_j|^{-1}<|p_j|, \quad |x_j|<|y_j|,\quad 
T_j(x_j)=\cj{x}_j^{-1},
\quad T_j(y_j)=\cj{y}_j^{-1},
\]
where 
\begin{align}\label{pj}
p_j=\frac{2x_jy_j}{y_j+x_j}
\end{align}
is the pole of $T_j(z)$. 

Let us use $I_{n}(z)$ to denote the integral in the left-hand side of  \eqref{asymptotic-formula-1-in-z}, that is, 
\begin{align}\label{def-In}
I_n(z):=\frac{1}{2 \pi i} \ointctrclockwise\limits_{\T(0,|x_j|) } 
\frac{T'_j(\zeta)\left(T_j(\zeta)/\zeta\right)^{n}}{\zeta-z}  d\zeta,\quad |z|\not=|x_j|.
\end{align}

Let $C_j$ be the circle that passes through $x_j$ and $y_j$ and is symmetric  about the line $\ell_{j}:=\{tc_j:t\in\R\}$. When  $r_j=|c_j|$, $C_j$ is actually the line perpendicular to $\ell_j$ at $x_j$. When $r_j\not=|c_j|$, we will think of $C_j$ as a positively oriented contour, and when $r_j=|c_j|$, the orientation of $C_j$ will be in the direction of the vector $ix_j$.

Let $E$ be a closed subset of $\cj{\C}\setminus\T(0,|x_j|)$. We can find   a positive number
$\varepsilon$, smaller than the  distance between $E$ and $\T(0,|x_j|)$ and such that $T_j(z)/z$ is analytic in the annulus $|x_j|-\varepsilon<|z|<|x_j|+\varepsilon$. It is also possible to find  $r\in (|x_j|-\varepsilon,|x_j|+\varepsilon)$ such that the circle  $\T(0,r)$ intersects $C_j$ at two distinct points. Indeed, if $r_j>|c_j|$, then any $r \in (|x_j|-\varepsilon,|x_j|)$ will do, while if $r_j\leq |c_j|$, then  $r$ needs to be chosen greater than $|x_j|$.

The two points at which $\T(0,r)$ intersects $C_j$  are the end points of two arcs  of $C_j$. Of these two arcs, let us denote by $C_{j,r}$ the one  containing the point $x_j$. Let $\T_{j,r}$ denote the arc of $\T(0,r)$ that falls into the closure of the unbounded component of $\C\setminus C_j$ (if $r_j=|c_j|$ we choose $\T_{j,r}$ lying to the left of the line $C_j$), and let $L_{j,r}:=\T_{j,r}\cup C_{j,r}$, a Jordan contour which we consider to be positively oriented.  By Cauchy's theorem, we have   that for all $z\in E$,     
\begin{align}\label{I_n-formula-1-z}
\begin{split}
I_{n}(z)={} &  \frac{1}{2 \pi i}\ointctrclockwise_{L_{j,r}} T'_j(\zeta)\frac{(T_j(\zeta)/\zeta)^{n}}{\zeta - z}d\zeta\\
={} &\frac{1}{2 \pi i}\int_{\T_{j,r}} T'_j(\zeta)\frac{(T_j(\zeta)/\zeta)^{n}}{\zeta - z}d\zeta+\frac{\epsilon_j}{2 \pi i}\int_{C_{r,j}} T'_j(\zeta)\frac{(T_j(\zeta)/\zeta)^{n}}{\zeta - z}d\zeta.
\end{split}
\end{align}

In the latter two integrals, the orientation of $\T_{j,r}$ and $C_{j,r}$ is the one that they  inherit as arcs of the positively oriented circles $\T(0,r)$ and $C_j$, respectively. When $r_j<|c_j|$, the orientation that $C_{j,r}$ inherits as an arc of $C_j$ is the opposite it inherits as an arc of the curve $L_{j,r}$, hence the need for the factor $\epsilon_j$  defined  in \eqref{e_j-def}. 

From the explicit representations for $z/T_j(z)$ that we give below, it is easy to see that $z/T_j(z)$ is a conformal map of any of the two components of $\cj{\C}\setminus C_j$  onto the exterior of the segment $[|x_j|^{2},|y_j|^{2}]$, mapping $\infty$ to $\infty$, and that   
\begin{align*}
q_{j,r}:=\max_{\zeta\in \T_{j,r}}|T_j(\zeta)/\zeta|<|T_j(x_j)/x_j|=|x_j|^{-2}.
\end{align*} 
This and \eqref{I_n-formula-1-z} give 
\begin{align}\label{I_n-formula-2-z}
\begin{split}
I_{n}(z)
={} &\frac{\epsilon_j}{2 \pi i}\int_{C_{r,j}} T'_j(\zeta)\frac{(T_j(\zeta)/\zeta)^{n}}{\zeta - z}d\zeta+O(q_{j,r}^n), \quad z\in E.
\end{split}
\end{align}

Let $\Omega_j$ denote the unbounded component of $\cj{\C}\setminus C_j$ when $r_j\not=|c_j|$, and let it denote the semi-plane lying to the right of the line $C_j$ when $r_j=|c_j|$.  Let 
\[
g:\cj{\C}\setminus [|x_j|^{2},|y_j|^{2}]\to  \Omega_j
\] 
denote the inverse of $z/T_j(z)$, and let $\lambda_j^{2}$ be the point of $(|x_j|^2,|y_j|^2)$ which is the image by $z/T_j(z)$ of the endpoints of $C_{j,r}$.

If $t\in \R$ and $f$ is a function defined at all non-real points of some neighborhood of $t$, then we will use $f_+(t)$ and $f_-(t)$  to respectively denote the limit of $f(z)$ as $z$ approaches $t$ from the upper and lower half-planes.   

We can then make the change of variable $\zeta=g_\pm(t)$ to express the integral over $C_{j,r}$ in \eqref{I_n-formula-2-z} as the sum of two integrals over the interval $[|x_j|^2,\lambda_j^2]$ to  arrive at 
\begin{align}\label{I_n-formula-3-z}
\begin{split}
I_{n}(z)
={} &\frac{-\epsilon_j}{2\pi i } \int_{|x_j|^2}^{\lambda_j^2} \left(\frac{T'_j(g_+(t))g'_+(t)}{g_+(t)-t} -\frac{T'_j(g_-(t))g'_-(t)}{g_-(t)-t} \right)t^{-n}dt+O(q_{j,r}^n).
\end{split}
\end{align}

We now need to rely on explicit computations. We first assume that $r_j\not=|c_j|$. In this case, we can write the mapping $T_j(z)/z$  as the composition 
\[
 \frac{z}{T_j(z)}=\frac{\cj{x}_j(y_j^2-x_j^2)}{2x_j}\left(\frac{w+w^{-1}}{2}\right)+\frac{\cj{x}_j(y_j^2+x_j^2)}{2x_j},
\]
with 
\[
 w=\frac{2}{y_j-x_j}\left(z-\frac{y_j+x_j}{2}\right).
\]

Hence, $z/T_j(z)$ behaves essentially as the Zhukovsky transformation $(w+w^{-1})/2$, so that $z/T_j(z)$ is a conformal map of the exterior (and of the interior) of the circle $C_j$  onto the exterior of the segment $[|x_j|^{2},|y_j|^{2}]$, mapping $\infty$ to $\infty$.

The inverse $g(t)$ of $z/T_j(z)$ is then given by
\begin{align}\label{exp-for-g-case1}
g(t)=\frac{y_j+x_j}{2}+\frac{y_j-x_j}{2}J\left(\frac{2x_jt}{\cj{x}_j(y_j^2-x_j^2)}-\frac{y_j^2+x_j^2}{y_j^2-x_j^2}\right),
\end{align}
where $J(u)=u+\sqrt{u^2-1}$ is the inverse of the Zhukovsky transformation. Here the branch of the square root in $\C\setminus[-1,1]$ is chosen so as to have $\sqrt{u^2-1}>0$ for $u>1$.

To keep the upcoming expressions as clean as possible, let us set 
\begin{align}\label{def-u-and-A}
u=\frac{2x_jt}{\cj{x}_j(y_j^2-x_j^2)}-\frac{y_j^2+x_j^2}{y_j^2-x_j^2}, \quad A=\frac{y_j+x_j}{y_j-x_j},
\end{align}
so that 
\begin{align*}
g(t)=\frac{y_j-x_j}{2}(J(u)+A),
\end{align*}
and consequently
\begin{align*}
y_jx_j-g(t)\frac{y_j+x_j}{2}=-\left(\frac{y_j-x_j}{2}\right)^2(1+AJ(u)).
\end{align*}

 Differentiating  \eqref{exp-for-g-case1} gives
\begin{align*}
g'(t)=\frac{x_j}{\cj{x}_j}\frac{1}{y_j+x_j}\frac{J\left(u\right)}{\sqrt{
u^2-1
}},
\end{align*}
and since 
\[
T_j'(z)=\frac{x_j}{\cj{x}_j}\frac{\left(\frac{y_j-x_j}{2}\right)^2}{\left(y_jx_j-z\frac{y_j+x_j}{2}\right)^2},
\]
we have
\begin{align}\label{needed-for-int}
\frac{T_j'(g(t))g'(t)}{g(t)-z}={} & \frac{H(t)}{\sqrt{(t-|x_j|^2)(t-|y_j|^2),
}}
\end{align}
where
\[
H(t)= \frac{\frac{x_j}{\cj{x}_j}\left(\frac{2}{y_j-x_j}\right)^2J\left(u\right)}{(1+AJ(u))^2\left(J(u)+A-\frac{2z}{y_j-x_j}\right)}.
\]

Using that $\pm i\sqrt{(t-|x_j|^2)(|y_j|^2-t)}$ are, respectively, the $\pm$ boundary values at $t\in (|x_j|^2,|y_j|^2)$ of the function $\sqrt{(z-|x_j|^2)(z-|y_j|^2)}$,  we get from \eqref{needed-for-int} and \eqref{I_n-formula-3-z} that uniformly in $z\in E$ as $n\to\infty$,
\begin{align}\label{Inexpress-1}
\begin{split}
I_{n}(z)
={} & \frac{\epsilon_j|x_j|^{-2n}}{2\pi } \int_{1}^{\left|\frac{\lambda_j}{x_j}\right|^2} \frac{(H_+(|x_j|^2x)+H_-(|x_j|^2x))x^{-n}}{\sqrt{(y_j/x_j)^2-x}\sqrt{x-1}}dx+O(q_{j,r}^n).
\end{split}
\end{align}

With the help of the relations $J_+(u)J_-(u)=1$ and $J_+(u)+J_-(u)=2u$ for every $u\in (-1,1)$, we get 
\begin{align}\label{Hvalues}
\begin{split}
&\frac{\cj{x}_j}{x_j}\left(\frac{y_j-x_j}{2}\right)^2(H_+(t)+H_-(t))\\
& =\frac{(1+2Au+A^2)^2+(1-A^2)(1+2Au+A^2)-\frac{4(2A+(1+A^2)u)z}{y_j-x_j}}{(1+2Au+A^2)^2	\left(1+2u\left(A-\frac{2z}{y_j-x_j}\right)+\left(A-\frac{2z}{y_j-x_j}\right)^2\right)}.
\end{split}
\end{align}
Using the values that define $A$ and $u$ in \eqref{def-u-and-A}, we compute
\begin{align*}
1+2Au+A^2={} &\left(\frac{2}{y_j-x_j}\right)^2\frac{x_j}{\cj{x}_j}t,\\
2A+(1+A^2)u={} & \left(\frac{2}{y_j-x_j}\right)^2\frac{y_j^2+x_j^2}{y_j^2-x_j^2}\left(\frac{x_j}{\cj{x}_j}t-\frac{2y_j^2x_j^2}{y_j^2+x_j^2}\right),
\end{align*}
and
\begin{align*}
&1+2u\left(A-\frac{2z}{y_j-x_j}\right)+\left(A-\frac{2z}{y_j-x_j}\right)^2\\ &=
\frac{4}{(y_j-x_j)^2}(z-x_j)^2
+\frac{x_j}{\cj{x}_j}\frac{4\left(t-|x_j^2|\right)}{(y_j-x_j)^2}\left(1-\frac{2z}{y_j+x_j}\right).
\end{align*}
Assisted by these three identities, \eqref{Hvalues}  simplifies to
\begin{align*}
&H_+(|x_j|^2x)+H_-(|x_j|^2x) \\
& =-\frac{x^{-2}}{|x_j|^2}\frac{\left(\frac{y_j}{x_j}-1\right)(1-z/x_j)+(x-1)\left(\frac{y_j}{x_j}-2+z\frac{(y_j/x_j)^2+1}{y_j+x_j}\right)-(x-1)^2}{(1-z/x_j)^2+(x-1)\left(1-\frac{2z}{y_j+x_j}\right) }.
\end{align*}
Placing this expression into \eqref{Inexpress-1} we obtain
\begin{align}\label{I_n-formula-5-z}
\begin{split}
I_{n}(z)
={} &-\frac{|x_j|^{-2n-2}}{2\pi } \int_{1}^{\left|\frac{\lambda_j}{x_j}\right|^2} R_j(x-1,z)\frac{x^{-n-2}}{\sqrt{x-1}}dx+O(q_{j,r}^n),
\end{split}
\end{align}
uniformly in $z\in E$ as $n\to\infty$, with $R_j(w,z)$ given by \eqref{def-Rjk}.

When $r_j=|c_j|$, the computations are considerably  simpler. In this case,  we have
\[
 T_j(z)=\frac{x_j/\cj{x}_j}{2x_j-z},\qquad\frac{z}{T_j(z)}=|x_j|^2-\frac{\cj{x}_j}{x_j}\left(z-x_j\right)^2,
\]
so that $z/T_j(z)$ is a conformal map of any of the two half-planes that make up $\C\setminus C_j$ onto the exterior of $[|x_j|^2,\infty]$, the line $C_j$ being (doubly) mapped onto $[|x_j|^{2},\infty]$.  The inverse $g(t)$ of $z/T_j(z)$ is given by 
\[
 g(t)=x_j+\frac{x_j}{|x_j|}\sqrt{|x_j|^2-t},
\]
where the branch of the square root is the principal branch. Using these expressions, we compute 
\begin{align*}
\frac{T_j'(g(t))g'(t)}{g(t)-z}={} & -\frac{t^{-2}\left(1+\sqrt{1-\frac{t}{|x_j|^2}}\right)^2}{2\sqrt{1-\frac{t}{|x_j|^2}}\left(1-z/x_j+\sqrt{1-\frac{t}{|x_j|^2}}\right)},
\end{align*}
which combines with \eqref{I_n-formula-3-z} to yield \eqref{I_n-formula-5-z}, but  this time with $R_j(w,z)$ given by \eqref{def-Rjk-2}.

For every integer $m\geq 0$ we have 
\begin{align*}
 R_j(x-1,z)={} &\sum_{k=0}^m R_{j,k}(z)(x-1)^k +O\left((x-1)^{m+1}\right)
 \end{align*}
uniformly in $z\in E$ and $x\in [1, |\lambda_j/x_j|^2]$, which combined with \eqref{I_n-formula-5-z} produces
\begin{align}\label{asymptotic-formula-4}
\begin{split}
2\pi|x_j|^{2n+2}I_n(z)={}&-\sum_{k=0}^m R_{j,k}(z)\int_{1}^{\left|\frac{\lambda_j}{x_j}\right|^2 }x^{-n-2}(x-1)^{k-1/2}dx\\
&+O\left(\int_{1}^{\left|\frac{\lambda_j}{x_j}\right|^2}x^{-n-2}(x-1)^{m+1/2}dx\right)+O(|x_j|^{2n}q_{j,r}^n)
\end{split}
\end{align}
uniformly for $z\in E$ as $n\to\infty$. For $n\geq k\geq 0$, we have 
\begin{align}\label{asymptotic-formula-5}
\begin{split}
\int_{1}^{\left|\frac{\lambda_j}{x_j}\right|^2}x^{-n-2}(x-1)^{k-1/2}dx ={} &\int_{0}^{1}t^{n-k+1/2}(1-t)^{k-1/2}dt+O(|\lambda_j/x_j|^{-2n})\\
={} &\frac{\Gamma(n-k+3/2)\Gamma(k+1/2)}{\Gamma(n+2)}+O(|\lambda_j/x_j|^{-2n})\\
={} & \frac{\Gamma(k+1/2)}{(n+1)^{k+1/2}}(1+O(1/n))
\end{split}
\end{align}
as $n\to\infty$. The formula \eqref{asymptotic-formula-1-in-z} then follows from \eqref{asymptotic-formula-4} and \eqref{asymptotic-formula-5}.

 Let us finish with the observation that if $\Sigma_j$ denotes the component of the complement of $\T(0,|x_j|)\cup C_j$ with boundary $\partial \Sigma_j=\T(0,|x_j|)\cup C_j$, and we want to evaluate $I_n(z)$ for $z\not\in \cj{\Sigma}_j$, the first integral occurring in \eqref{I_n-formula-1-z} can be taken directly over $C_j$ (rather than over $L_{r,j}$), which leads by the same arguments used above to 
\begin{align}\label{direct-formula}
\begin{split}
I_{n}(z)
={} &-\frac{|x_j|^{-2n-2}}{2\pi } \int_{1}^{\left|\frac{y_j}{x_j}\right|^2} R_j(x-1,z)\frac{x^{-n-2}}{\sqrt{x-1}}dx,\qquad z\not\in \cj{\Sigma}_j. 
\end{split}
\end{align}
In particular, this applies to both $0$ and $p_j$, since these points  always fall outside $\cj{\Sigma}_j$. 

When $r_j=|c_j|$, we have $y_j=\infty$ and $R_j(w,0)\equiv -1$, so that  in  such a case \eqref{direct-formula} turns into \eqref{particular-case}.

 \subsection{Proof of Proposition \ref{prop-integral-error-2}}
 
 We will only  prove Proposition \ref{prop-integral-error-2} under the assumption that $r_j\not=|c_j|$, since the proof when $r_j=|c_j|$ is simpler and follows along similar lines. 

We begin with the identity
\begin{align}\label{I_n-formula-1}
\begin{split}
\chi_{n}(z):={} &  \frac{1}{2 \pi i}\ointctrclockwise\limits_{\T(0,|x_j|) } T'_j(\zeta)\frac{(T_j(\zeta)/\zeta)^{n}-(T_j(z)/z)^{n}}{\zeta - z}d\zeta\\
={} &
 \sum_{k=0}^{n-1}\frac{\left(T_j(z)/z\right)^k}{2 \pi i} \ointctrclockwise\limits_{\T(0,|x_j|) } 
\!\!\frac{\frac{T_j(\zeta)}{\zeta}-\frac{T_j(z)}{z}}{\zeta-z} T'_j(\zeta)\left(T_j(\zeta)/\zeta\right)^{n-1-k}d\zeta.
\end{split}
\end{align}
Using \eqref{formula-Tj} we find 
\begin{align*}
\frac{T_j(\zeta)-T_j(z)}{\zeta-z}={} & \frac{x_jy_j}{y_jx_j-\zeta\frac{y_j+x_j}{2}}\left(\frac{T_j(z)}{z}-\frac{1}{z}\frac{y_j+x_j}{2y_j\cj{x}_j}\right),
\end{align*}
and since 
\begin{align*}
\frac{\frac{T_j(\zeta)}{\zeta}-\frac{T_j(z)}{z}}{\zeta-z}=\frac{T_j(\zeta)-T_j(z)}{\zeta(\zeta-z)}-\frac{T_j(z)}{\zeta z},
\end{align*}
we can write \eqref{I_n-formula-1} in the form
\begin{align}\label{I_n-formula-2}
\begin{split}
\chi_{n}(z)
={} &-\sum_{k=0}^{n-1}\left(\frac{T_j(z)}{z}\right)^{k+1}I_{n-k-1}(p_j)\\
&-\frac{y_j+x_j}{2y_j\cj{x}_jz}\sum_{k=0}^{n-1}\left(\frac{T_j(z)}{z}\right)^k\left(I_{n-k-1}(0)-I_{n-k-1}(p_j)\right)
\end{split}
\end{align}
with $I_n(z)$ given by \eqref{def-In} (recall that $p_j$, given by \eqref{pj}, is the pole of $T_j$).

As noted at the end of the proof of Proposition \eqref{prop-integral-error-1}, we can use the formula \eqref{direct-formula} to evaluate $I_n(z)$ for $z=0$ and $z=p_j$.
Since
\begin{align*}
\begin{split}
R_j(x-1,0)=\frac{\epsilon_j \left(\frac{y_j}{x_j}-1\right)}{\sqrt{(y_j/x_j)^2-x}}-\frac{\epsilon_j \left(x-1\right)}{\sqrt{(y_j/x_j)^2-x}},
\end{split}
\end{align*}
and
\begin{align*}
\begin{split}
 R_j(x-1,p_j)
 ={} &-\frac{\epsilon_j \left(\frac{y_j}{x_j}+1\right)}{\sqrt{(y_j/x_j)^2-x}}+\left(\frac{y_j-x_j}{2}\right)^{-2}\frac{\epsilon_jy_jx_j\left(\frac{y^2_j}{x_j^2}-x\right)(x-1)}{x\sqrt{(y_j/x_j)^2-x}}\\
 &-\frac{\epsilon_j(x-1)}{\sqrt{(y_j/x_j)^2-x}},
\end{split}
\end{align*}
\eqref{direct-formula} gives us
\begin{align}\label{identity-1}
|x_j|^{2n+2}\left(I_{n}(0)-I_{n}(p_j)\right)
={} &  -\frac{\epsilon_jy_j}{\pi x_j} \mathcal{I}_{n,1} 
+\frac{\epsilon_jy_jx_j}{2\pi}\left(\frac{2}{y_j-x_j}\right)^{2}\mathcal{I}_{n,2},
\end{align}
and
\begin{align}\label{identity-2}
\begin{split}
|x_j|^{2n+2}I_n(p_j)= {} &\frac{\epsilon_j\left(\frac{y_j}{x_j}+1\right)}{2\pi}\mathcal{I}_{n,1}-\frac{\epsilon_jy_jx_j}{2\pi\left(\frac{y_j-x_j}{2}\right)^2}\mathcal{I}_{n,2} +\frac{\epsilon_j}{2\pi}\mathcal{I}_{n,3},
  \end{split}
\end{align}
where 
\begin{align*}
\mathcal{I}_{n,1}:={} & \int_{1}^{\left|\frac{y_j}{x_j}\right|^2}\frac{x^{-n-2}dx}{\sqrt{(y_j/x_j)^2-x}\sqrt{x-1}},\\
\mathcal{I}_{n,2}:={} & \int_{1}^{\left|\frac{y_j}{x_j}\right|^2}\sqrt{(y_j/x_j)^2-x}\sqrt{x-1}x^{-n-3} dx,\\
\mathcal{I}_{n,3}:={} &\int_{1}^{\left|\frac{y_j}{x_j}\right|^2}\frac{\sqrt{x-1}x^{-n-2}dx}{\sqrt{(y_j/x_j)^2-x}}.
\end{align*}

Setting $t=|x_j|^{2}\frac{T_j(z)}{z}$, we can use \eqref{identity-1} and \eqref{identity-2} to write \eqref{I_n-formula-2} in the form
\begin{align}\label{I_n-dominant-term}
\begin{split}
|x_j|^{2n}\chi_{n}(z)={} &-
\frac{\epsilon_j\left(\frac{y_j}{x_j}+1\right)}{2\pi}\left(\frac{T_j(z)}{z}-\frac{1}{\cj{x}_jz}\right)\sum_{k=0}^{n-1}t^k\mathcal{I}_{n-k-1,1}\\
&+\frac{\epsilon_j}{\pi}\frac{2y_jx_j}{(y_j-x_j)^2}\left(\frac{T_j(z)}{z}-\frac{1}{\cj{x}_jz}\right)\sum_{k=0}^{n-1}t^k\mathcal{I}_{n-k-1,2}\\
&+\frac{\epsilon_jx_j}{\pi\cj{x}_j(y_j-x_j)z}\sum_{k=0}^{n-1}t^k\mathcal{I}_{n-k-1,2}\\
&- \frac{\epsilon_j}{2\pi}\frac{T_j(z)}{z}\sum_{k=0}^{n-1}t^k\mathcal{I}_{n-k-1,3}.
\end{split}
\end{align}

 To prove \eqref{BigO-formula-1iminf}, we make $z=x_j$ (recall that $T_j(x_j)=\cj{x}_j^{-1}$) in  \eqref{I_n-dominant-term}, which reduces to   
\begin{align*}
\begin{split}
|x_j|^{2n}\chi_{n}(x_j)={} 
&\frac{\epsilon_j|x_j|^{-2}}{\pi(y_j/x_j-1)}\int_{1}^{\left|\frac{y_j}{x_j}\right|^2}\frac{\sqrt{(y_j/x_j)^2-x}(1-x^{-n})}{x^2\sqrt{x-1}} dx\\
&- \frac{\epsilon_j|x_j|^{-2}}{2\pi}\int_{1}^{\left|\frac{y_j}{x_j}\right|^2}\frac{(1-x^{-n})dx}{x\sqrt{(y_j/x_j)^2-x}\sqrt{x-1}}.
\end{split}
\end{align*}
Letting $n\to\infty$ and computing the resulting integrals yields  
\begin{align*}
\begin{split}
\lim_{n\to\infty}|x_j|^{2n+2}\chi_{n}(x_j)
={}&\frac{\epsilon_j}{\pi(y_j/x_j-1)}\frac{\pi(|y_j/x_j|^2-1)}{2|y_j/x_j|}- \frac{\epsilon_j}{2\pi}\frac{\pi}{|y_j/x_j|}\\
={}&-\frac{1}{2}.
\end{split}
\end{align*}

We now prove \eqref{BigO-formula-1}. Just as we deduced \eqref{asymptotic-formula-1-in-z} from \eqref{I_n-formula-5-z}, we  can verify that, as $n\to\infty$, 
$\mathcal{I}_{n,1}=O(n^{-1/2})$, while $\mathcal{I}_{n,2}$ and $\mathcal{I}_{n,3}$ are both $O(n^{-3/2})$. These estimates can be used in \eqref{I_n-dominant-term} to deduce that
\begin{align*}
|x_j|^{2n}|\chi_n(z)|
={} &O\left(\left|\frac{T_j(z)}{z}-\frac{1}{z \cj{x}_j}\right|\sum_{k=0}^{n-1}\frac{|t|^k}{\sqrt{n-k}}\right)+ O\left(\sum_{k=0}^{n-1}\frac{|t|^k}{(n-k)^{3/2}}\right).
\end{align*}

With the aid of \eqref{formula-Tj}, we find
\begin{align*}
\frac{T_j(z)}{z}-\frac{1}{z\cj{x}_j} =\frac{z-x_j}{z\cj{x}_j}\frac{\frac{y_j-x_j}{2}}{y_jx_j-z\frac{y_j+x_j}{2}},
\end{align*}
\begin{align*}
\frac{T_j(z)}{z}-\frac{1}{|x_j|^2} =\frac{(z-x_j)^2}{z|x_j|^2}\frac{\frac{y_j+x_j}{2}}{y_jx_j-z\frac{y_j+x_j}{2}},
\end{align*}
so that 
\begin{align*}
|z||x_j|^2\frac{2}{|y_j+x_j|}\left|y_jx_j-z\frac{y_j+x_j}{2}\right|\left|\frac{T_j(z)}{z}-\frac{1}{|x_j|^2}\right| =|z-x_j|^2,
\end{align*}
and combining these three relations we obtain
\begin{align*}
\left|\frac{T_j(z)}{z}-\frac{1}{z\cj{x}_j}\right|=\frac{\frac{|y_j-x_j|}{\sqrt{2|z||y_j+x_j|}|x_j|^2}\sqrt{\left|\frac{T_j(z)}{z}-\frac{1}{|x_j|^2}\right|}}{\sqrt{\left|y_jx_j-z\frac{y_j+x_j}{2}\right|}} .
\end{align*}
Thus,
\begin{align*}
|x_j|^{2n}|\chi_n(z)|
={} &O\left(\sum_{k=0}^{n-1}\frac{|t|^k\sqrt{\left|t-1\right|}}{\sqrt{n-k}}\right) + O\left(\sum_{k=0}^{n-1}\frac{|t|^k}{(n-k)^{3/2}}\right).
\end{align*}

We now observe that $|t|\leq 1$ for $|z|=|x_j|$ because the mapping $t=|x_j|^2T_j(z)/z$ takes the circle $|z|=|x_j|$ onto a closed Jordan curve, all points of which are contained on $|w|<1$ except for the point $t=1$. This Jordan curve is symmetric about the $x$-axis with a  cusp at $t=1$ (forming a $0$ angle with the $x$-axis). It then follows from elementary geometric arguments that there is a constant $M_j'$ such that 
\[
 \frac{|t-1|}{1-|t|}\leq M_j',\quad t=|x_j|^2T_j(z)/z,\quad |z|=|x_j|.
\]
Hence, 
\begin{align*}
|x_j|^{2n}|\chi_n(z)|
\leq O\left(\sum_{k=0}^{n-1}\frac{|t|^k\sqrt{1-|t|}}{\sqrt{n-k}}\right)+O(1)
\end{align*}
uniformly on $|z|=|x_j|$ as $n\to\infty$.

So to finish the proof of \eqref{BigO-formula-1} we only need to bound the sum on the right-hand side of the previous inequality. Let $q\geq0$. By finding the extreme values of the  function $\tau^{q}\sqrt{1-\tau}$ we can easily see that $\tau^{q}\sqrt{1-\tau}< (q+1)^{-1/2}$  for every  $\tau\in [0,1]$, so that
\begin{align*}
\sum_{k=0}^{n-1}\frac{\tau^{k}\sqrt{1-\tau}}{\sqrt{n-k}}\leq {} &\sum_{k=0}^n\frac{\frac{1}{\sqrt{n-k}}+\frac{1}{\sqrt{k+1}}}{\sqrt{n-k}+\sqrt{k+1}}
\leq \frac{2}{\sqrt{n}+1}\sum_{k=1}^{n}\frac{1}{\sqrt{k}}\\
\leq {} &\frac{2}{\sqrt{n}+1}\left(1+\int_1^{n}x^{-1/2}\right)\leq 4.
\end{align*}
This finishes the proof of \eqref{BigO-formula-1}.

\subsection{Proof of Proposition \ref{cdcd}}
The asymptotic behavior of a series very much like \eqref{series-behavior} has already been established in \cite{DMN}, but the technical details of the proof were given in the expanded version \cite{DMN-ARXIV}. Our job here is more of translating what was accomplished there into our current setting. We will therefore lay out the main steps involved in proving \eqref{series-behavior}, indicating in each case where to find the full explanation in \cite{DMN-ARXIV}.

Let $B_j$ be the disk defined by \eqref{disk-Bj}, and note that $\Phi_{j}(D(0,|a_j|))= B_j$. Since we want to analyze the left-hand side of \eqref{series-behavior} for $z\in D(0,|a_j|)$, and since $T_j^v(z)=\Phi_j^{-1}(\sigma^{2v}\Phi_j(z))$, it is equivalent to make the change of variable $z=\Phi_j^{-1}(t)$ and analyze the simpler expression that results for $t\in B_j$.  
This change of variables gives 
\begin{align}\label{after-change-variables}
\left.\sum_{v=1}^\infty   (T_j ^v(z))^n (T_j ^v)' (z) \right|_{z=\Phi_j^{-1}(t)}= \frac{a_j^n}{\lambda'_j(t)} \sum_{v=1}^\infty  \sigma_j^{2v} G_{j,n}(\sigma_j^{2v} t),
\end{align}
where 
\begin{align}\label{definition-G-functions}
\lambda_j(t)=\frac{t+|a_j|}{1+|a_j|t},\quad G_{j,n} (t)= \lambda_j '(t)\left(
\frac{\lambda_j(t)}
{|a_j|} \right)^n.
\end{align}
Observe that  $\Phi_j^{-1}(t)=\frac{a_j}{|a_j|}\lambda_j(t)$ and so $\lambda_j(B_j)=D(0,|a_j|)$.

Since 
\[
\left(\frac{\lambda_j(t/n)}
{|a_j|} \right)^n=\left(1+\frac{\alpha_jt/n}{1+|a_j|t/n}\right)^n,
\] 
this quantity should converge to an exponential as $n\to\infty$. Indeed, by following the arguments in the proof of \cite[Lemma 3.1]{DMN-ARXIV}, we can prove that for every compact set $E \subset B_j$, there exist positive constants $m$ and $M$ such that for every integer $n \geq 1$, we have
\begin{align}\label{estimate-exponential}
\left| 
e^{\alpha_jtx} - |a_j|^{-n} \lambda_j^n(tx/n) 
\right|
\leq 
\frac{Mx^2e^{-mx}}{n},
\quad
t\in E,
\quad
0 \leq x \leq n.
\end{align}

We now seek to find integral representations for the series in the right-hand side of \eqref{after-change-variables} and for $\Theta_{\sigma_j^2}(t)$ as defined by \eqref{definition-theta}. This can be accomplished via the functions 
\[
S_j(x) := \sigma_j^{2 \lfloor \log_{\sigma_j^2} (x)  \rfloor}=x \sigma_j^{-2   \langle  \log_{\sigma_j^2} (x)  \rangle }
, \quad x \in (0, \infty),\]
where   $\lfloor x  \rfloor$ and $\langle x \rangle$ denote the integer and fractional part of $x$, respectively. Note that for every $x>0$, we have
\begin{equation*}
x \in (\sigma_j^{2(v+1)}, \sigma_j ^{2v}] 
\quad
\Leftrightarrow
\quad
S_j(x) = \sigma_j^{2v}
 \end{equation*}
 and 
 \begin{align}\label{bounds}
  x\leq S_j(x)\leq \sigma_j^{-2} x,\qquad x>0.
 \end{align}
Note also that 
\begin{align}\label{integral-repres-1}
\int_0 ^1 S_j(x)  f(x)   dx &= 
\sum_{v =0}  ^\infty \int_{\sigma_j^{2v+2}} ^{\sigma_j^{2v}} \sigma_j^{2v}  f(x)   dx,
\end{align}
\begin{align*}\
\int_0 ^\infty S_{j} (x)  f(x)   dx &=
\sum_{v \in \Z} \int_{\sigma_j^{2v+2}} ^{\sigma_j^{2v}} \sigma_j^{2v} f(x)   dx .
\end{align*}

Using the latter identity, we find the first integral representation
\begin{equation}\label{Theta}
\Theta_{\sigma_j^2}(t) = -\frac{ \sigma_j^2 t^2 }{1-\sigma_j^2 }    \int_{0} ^\infty S_{j} (x) e^{t x} dx,\qquad \Re{t}<0.
\end{equation}
Indeed, 
\begin{align*}
t \int_{0} ^\infty S_{j} (x) e^{  t x} dx 
={} & \sum_{v \in \Z} \sigma_j^{2v}\int_{\sigma_j^{2v+2}} ^{\sigma_j^{2v}}   e^{t x} t dx=\sum_{v \in \Z}  \sigma_j^{2v} 
e^{ \sigma_j^{2v}t}  
-  \sum_{v \in \Z}  \sigma_j^{2v}  e^{  \sigma_j^{2v+2}t}
\\
={} & 
\frac{(\sigma_j^2-1)\Theta_{\sigma_j^2}(t)}{\sigma_j^2t}.
\end{align*}

A consequence of \eqref{Theta} (see formula (33) in \cite{DMN-ARXIV} and its subsequent derivation) is that 
\begin{align}\label{diference-theta}
\Theta_{\sigma_j^2}(nt) - \Theta_{\sigma_j^2}((n+1)t) = O(1/n)
\end{align}
uniformly as $n\to\infty$ on compact subsets of $\Re{t}<0$.
 
The second integral representation is
 \begin{align}\label{integral-represetation-3}
 \sum_{v=0}^\infty \sigma_j^{2v} G_{j,m}(\sigma_j^{2v} t) = \frac{G_{j,m}(t)}{1-\sigma_j^2} -  \frac{\sigma_j^2t }{(1-\sigma_j^2)n }  \int_0 ^{n}  S_j(x/n) G_{j,m}'(xt/n)  dx,
 \end{align}
which is valid for every pair $(m,n) \in \N \times \N$ and for every  $t \in \C\setminus (-\infty,-|a_j|^{-1}]$. This representation can be derived by  
using the summation by parts formula in conjunction with \eqref{integral-repres-1}; for the details see Lemma 3.3 in \cite{DMN-ARXIV} and its proof.

From the definition \eqref{definition-G-functions}, we find  
\begin{align}\label{Gprime}
G'_{j,n+1}(t) = \frac{n+1}{|a_j|}  \left(\frac{\lambda_j(t)}{|a_j|}\right)^{n} \mathcal{L}_{j,n}(t),
\end{align}
with  
\begin{equation*}
\mathcal{L}_{j,n} (t) =  \frac{|a_j|^2\alpha_j^2}{(1+|a_j|t)^4} - \frac{2|a_j|^2\alpha_j(t+|a_j|)}{(n+1)(1+ |a_j|t)^4},
\end{equation*}
so that, as $n \to \infty$,
\begin{align}\label{Ljn}
\mathcal{L}_{j,n} (xt/n) 
&=|a_j|^2\alpha_j^2 +O(x/n)+O(1/n)
\end{align}
uniformly for $t$ on compact subsets of $\C\setminus (-\infty,-|a_j|^{-1}]$ and $x\in  [0,n]$.

It follows from \eqref{Theta}, \eqref{bounds} and \eqref{Ljn} that
\begin{align}\label{thetant}
\begin{split}
\Theta_{\sigma_j^2}(n\alpha_jt) ={} & -\frac{ \sigma_j^2 \alpha_j^2t^2 }{1-\sigma_j^2 }    \int_{0} ^\infty nS_{j} (x/n) e^{\alpha_jt x} dx\\
={} & -\frac{ \sigma_j^2|a_j|^{-2}t^2 }{1-\sigma_j^2 }   \int_{0} ^n nS_{j} (x/n)e^{\alpha_j t x}  \mathcal{L}_{j,n} (xt/n)  dx  + O(1/n)
\end{split}
\end{align}
locally uniformly on $\C\setminus (-\infty,-|a_j|^{-1}]$ as $n \to \infty$. 

From the relations \eqref{Gprime}, \eqref{Ljn}, \eqref{thetant}, and \eqref{estimate-exponential} we  see that for every compact set $E\subset B_j$, there exist positive constants $m$ and $M'$ such that 
\begin{align*}
&\left|
 \int_{0} ^n nS_j(x/n)    G'_{j,n+1} (xt/n)  dx
+
\frac{(n+1)|a_j|(1-\sigma_j^2)}{ \sigma_j^2 t^2 }\Theta_{\sigma_j^2}(n\alpha_jt)
\right|
\\
&\leq \frac{n+1}{|a_j|}
\int_0 ^n nS_{j} (x/n) \left| \frac{\lambda_j(tx/n)^n}{|a_j|^n} -e^{\alpha_jtx} \right|  | \mathcal{L}_{j,n}\left(\frac{tx}{n}\right) |  dx + O(1)
\\
& \leq M' \int_{0} ^\infty x^3 e^{-mx}  dx + O(1)
\end{align*} 
uniformly in $t \in E$ as $n \to \infty$. 

Since $|\lambda_j(t)|<|a_j|$ for $t\in B_j$, this latter estimate plus  \eqref{integral-represetation-3} and \eqref{diference-theta} yield
\begin{align}\label{before-substitution}
\sum_{v=0}^\infty  \sigma_j^{2v} G_{j,n}(\sigma_j^{2v} t)=\frac{|a_j|\Theta_{\sigma_j^2}(n\alpha_jt) }{  nt }+O(n^{-2})
\end{align}
locally uniformly on  $B_j$ as $n \to \infty$. 

We now observe that if $E$ is a closed subset of $\cj{B}_j\setminus\{0\}$, then $\sigma_j^2E=\{\sigma_j^2t:t\in E\}$ is a closed subset of $B_j$, and as noted before, $\Theta_{\sigma_j^2}(\sigma_j^2t)=\Theta_{\sigma_j^2}(t)$, so that by replacing $t$ by $\sigma_j^2t$ in  \eqref{before-substitution} and using the resulting formula in \eqref{after-change-variables}, we conclude that 
\begin{align}\label{before-substitution-1}
\left.\sum_{v=1}^\infty   (T_j ^v(z))^n (T_j ^v)' (z) \right|_{z=\Phi_j^{-1}(t)}=\frac{a_j^n}{\lambda'_j(t)}\frac{|a_j|\Theta_{\sigma_j^2}(n\alpha_jt)}{  nt }+O(|a_j|^n/n^2)
\end{align}
uniformly on closed subsets of $\cj{B}_j\setminus\{0\}$ as $n \to \infty$. Since $\Phi_j$ takes $\cj{D(0,|a_j|)}\setminus\{a_j\}$ onto $\cj{B}_j\setminus\{0\}$, 
\eqref{series-behavior} follows by making $t=\Phi_j(z)$ in \eqref{before-substitution-1}.


\begin{thebibliography}{9}



\bibitem{BS}{B. Beckermann, N. Stylianopoulos, Bergman orthogonal polynomials and the Grunsky matrix, Constr. Approx. 47 (2018),  211-235.}



\bibitem{Carl}{T. Carleman, \"{U}ber die Approximation Analytischer Funktionen Durch Lineare Aggregate von Vorgegebenen
Potenzen,  Archiv. f\"{o}r Math. Atron. och Fysik 17 (1922), 1-30.}

\bibitem{DM1}{P. Dragnev, E. Mi\~{n}a-D\'{\i}az, On a series representation for Carleman orthogonal polynomials,  ‎Proc. Amer. Math. Soc. 138 (2010), 4271-4279.}

\bibitem{DM2}{P. Dragnev, E. Mi\~{n}a-D\'{\i}az, Asymptotic behavior and zero distribution of Carleman orthogonal polynomials,  J. Approx. Theory 162 (2010), 1982-2003.}

\bibitem{DMN}{P. Dragnev, E. Mi\~{n}a-D\'{\i}az, M. Northington, Asymptotics of Carleman polynomials for level curves of the inverse of a shifted Zhukovsky transformation, Comput. Methods Funct. Theory 13 (2013), 75-89.}

\bibitem{DMN-ARXIV}{P. Dragnev, E. Mi\~{n}a-D\'{\i}az, M. Northington, Asymptotics of Carleman polynomials for level curves of the inverse of a shifted Zhukovsky transformation, arXiv:1212.1816.}

\bibitem{Gaier}{D. Gaier, Lectures on Complex Approximation, Birkh\"{a}user Boston Inc. Cambridge, MA, USA 1987.}

\bibitem{GPSS}{B. Gustafsson, M. Putinar, E. B. Saff, N. Stylianopoulos, Bergman polynomials on an archipelago: estimates, zeros, and shape reconstruction, Adv. Math. 222 (2009), 1405-1460.}

\bibitem{Henegan}{J. Henegan, Asymptotic Properties of Polynomials Orthogonal over Multiply Connected Domains, Doctoral Dissertation, The University of Mississippi, ProQuest Dissertations Publishing, 2017.}

\bibitem{korov}{P. P. Korovkin, The asymptotic representation of polynomials orthogonal over a region, Dokl. Akad. Nauk SSSR 58 (1947), 1883-1885.}

\bibitem{LSS}{A. L. Levin, E. B. Saff, N. S. Stylianopoulos, Zero distribution of Bergman orthogonal polynomials for certain planar domains, Constr. Approx. 19 (2003), 411-435.}


\bibitem{MS}{V. Maymeskul, E. B. Saff, Zeros of polynomials orthogonal over regular N-gons, J. Approx. Theory 122 (2003), 129-140.}



\bibitem{M1}{E. Mi\~{n}a-D\'{\i}az, An asymptotic integral representation
for Carleman orthogonal polynomials, Int. Math. Res. Notices,  2008 (2008), article ID rnn065, 38 pages.}

\bibitem{M2}{E. Mi\~{n}a-D\'{\i}az, Asymptotics of polynomials orthogonal over the unit disk with respect to a polynomial weight without zeros on the unit circle, J. Approx. Theory 165 (2013), 41-59.}


\bibitem{MSS}{E. Mi\~{n}a-D\'{\i}az, E. B. Saff, N. Stylianopoulos, Zero distributions for polynomials orthogonal with weights over certain planar regions, Comput. Methods Funct. Theory 5 (2005), 185-221.}

\bibitem{MinaSimanek}{E. Mi\~{n}a-D\'{\i}az, B. Simanek, Spectral transforms of measures and orthogonal polynomials on regions, J. Math. Anal. Appl. 407 (2013), 290-304.}

\bibitem{SSSV}{E. B. Saff, H. Stahl, N. Stylianopoulos,  V. Totik, Orthogonal polynomials for area-type measures and image recovery, Siam J. Math. Anal. 47 (2018),  2442-2463.}

\bibitem{SS}{E. B. Saff, N. Stylianopoulos, Relative asymptotics of orthogonal polynomials for perturbed measures, Mat. Sb. 209 (2018), 168-188.}

\bibitem{SS2}{E. B. Saff, N. Stylianopoulos, On the zeros of asymptotically extremal polynomial sequences in the plane, J. Approx. Theory 191 (2015), 118-127.}

\bibitem{ST}{E. B. Saff, V. Totik, Logarithmic Potentials with External Fields, Berlin, Springer-Verlag, 1997.}


\bibitem{Simanek1}{B. Simanek, A new approach to ratio asymptotics for orthogonal polynomials, J. Spectr. Theory 2 (2012), no. 4, 373-395.}

\bibitem{Simanek2}{B. Simanek, Asymptotic properties of extremal polynomials corresponding to measures supported on analytic regions, J. Approx. Theory 170 (2013), 172-197.}

\bibitem{S}{N. Stylianopoulos, Strong asymptotics for Bergman polynomials over domains with
corners and applications, Constr. Approx. 38 (2013), 59-100.}

\bibitem{Suetin}{P. K. Suetin, Polynomials Orthogonal over a Region and Bieberbach Polynomials, Proceedings of the Steklov Institute of Mathematics, Amer. Math. Soc., Providence, Rhode Island, 1975.}

\end{thebibliography}
\end{document}